%
%
%
%
\documentclass[11pt]{amsart}
\usepackage{amsmath,amssymb,txfonts}
\usepackage{amssymb}
\usepackage{amsmath}
\usepackage{mathrsfs}
\usepackage{amsmath,amssymb}
\usepackage{authblk}
\usepackage[english]{babel}
\usepackage{amsmath, amsthm, amssymb}
\usepackage[shortlabels]{enumitem}
\usepackage{color}
\usepackage[foot]{amsaddr}

\textwidth 160truemm
\textheight 226truemm
\oddsidemargin 3.6mm
\evensidemargin 3.6mm
\topmargin -6mm
\headsep 6mm
\footskip 11mm
\baselineskip 4.5mm
\usepackage{color}

\providecommand{\keywords}[1]
{
  \small	
  \textbf{\textit{Keywords---}} #1
}

\newtheorem{theorem}{Theorem}[section]
\newtheorem{lemma}[theorem]{Lemma}
\newtheorem{proposition}[theorem]{Proposition}
\newtheorem{corollary}[theorem]{Corollary}
\theoremstyle{definition}
\newtheorem{definition}[theorem]{Definition}

\theoremstyle{remark}
\newtheorem{remark}[theorem]{Remark}

\numberwithin{equation}{section}

\linespread{1.0}


\begin{document}

\title[Embeddings  of  Functions Spaces]{Embeddings of  Function Spaces   via  the  Caffarelli-Silvestre Extension, Capacities and Wolff potentials}

\author[P. Li]{Pengtao Li}
\address[P. Li]{School of Mathematics and Statistics, Qingdao University, Qingdao, Shandong 266071, China}
\email{ptli@qdu.edu.cn}

\author[S. Shi]{Shaoguang Shi}
\address[S. Shi]{Department of Mathematics, Linyi University, Linyi, 276005, China}
\email{shishaoguang@lyu.edu.cn}

\author[R. Hu]{Rui Hu}
\address[R. Hu]{Department of Mathematics and Statistics,  MacEwan University   Edmonton, Alberta T5J2P2 Canada}
\email{hur3@macewan.ca}

\author[Z. Zhai]{Zhichun Zhai}
\address[Z. Zhai]{Department of Mathematics and Statistics,  MacEwan University   Edmonton, Alberta T5J2P2 Canada}
\email{zhaiz2@macewan.ca}

\thanks{Project supported:  Pengtao Li was in part supported by the  National Natural Science Foundation of
China (\# 11871293 \ \&\ \# 11571217) \& Shandong Natural Science
Foundation of China (\# ZR2017JL008, \# ZR2016AM05)}


\begin{abstract}
Let $P_{\alpha} f(x,t)$ be the  Caffarelli-Silvestre extension of a smooth function  $f(x): \mathbb{R}^n \rightarrow \mathbb{R}^{n+1}_+:=\mathbb{R}^n\times (0,\infty).$ The purpose of this article is twofold. Firstly,  we want to characterize a nonnegative measure $\mu$ on   $\mathbb{R}^{n+1}_+$ such that $f(x)\rightarrow P_{\alpha} f(x,t)$ induces  bounded embeddings from the Lebesgue spaces $L^p(\mathbb{R}^n)$ to the  $L^q(\mathbb{R}^{n+1}_+,\mu).$
On one hand, these embeddings will be characterized by using a newly introduced
$L^p-$capacity associated with the Caffarelli-Silvestre extension.
In doing so,  the mixed norm estimates of $P_{\alpha} f(x,t),$  the dual form  of  the $L^p-$capacity,  the $L^p-$capacity of  general balls,  and a capacitary strong type inequality  will be established, respectively. On the other hand, when $p>q>1,$ these embeddings will also be characterized in terms of the Hedberg-Wolff  potential of $\mu.$  Secondly, we characterize a nonnegative measure $\mu$ on $\mathbb{R}^{n+1}_+$ such that $f(x)\rightarrow P_{\alpha} f(x,t)$ induces  bounded embeddings from the  homogeneous Sobolev spaces $\dot{W}^{\beta,p}(\mathbb{R}^n)$ to the $L^q(\mathbb{R}^{n+1}_+,\mu)$ in terms of the fractional perimeter of open sets for endpoint cases  and the fractional  capacity for general cases.
\end{abstract}
\subjclass[2010]{Primary 31, 35J; Secondary: 42B37.}
\date{}
\dedicatory{}

\keywords{Fractional Laplacian, Lebesgue space,  Sobolev space, Capacity, Fractional perimeter.}

 \maketitle

\tableofcontents
\pagenumbering{arabic}

\section{Introduction}
The fractional Laplace operator $(-\triangle)^{\frac{\alpha}{2}}$ for $\alpha\in (0,2)$
in $\mathbb{R}^n $ is defined  on the Schwartz class through the Fourier transform as
$$\widehat{((-\triangle)^{\frac{\alpha}{2}} f)} (\xi)=(2\pi |\xi|)^{\alpha}\widehat{f}(\xi),$$
where $\widehat{f}(\xi)=\int_{\mathbb{R}^n}e^{-2\pi ix\cdot \xi } f(x) dx$ is the Fourier transform of $f,$
or via the Riesz potential as
$$
(-\triangle)^{\frac{\alpha}{2}} f(x)=\frac{\alpha2^{\alpha}\Gamma\left({(n+\alpha)}/{2}\right)}{2\Gamma(1-\alpha/2)\pi^{{n}/{2}}}\hbox{P.V.}\int_{\mathbb{R}^n}\frac{f(x)-f(y)}{|x-y|^{n+\alpha}}dy.$$
Here $\Gamma(\cdot) $ is the usual Gamma function and $\hbox{P.V.}$ denotes the Cauchy principal value.
The fractional Laplacian  has been widely applied in probability,  finance, physical systems, and engineering problems.

The fractional Laplacian is a nonlocal operator because the value of $(-\triangle)^{\frac{\alpha}{2}} f$ at $x$ depends on the value of $f$ at infinity. This nonlocal property may cause some issues.
 Caffarelli and  Silvestre  in \cite{Caffarelli} localized the nonlocal operator $(-\triangle)^{\frac{\alpha}{2}} $ by adding another variable. They  provided the characterization  for the
fractional Laplacian $(-\triangle)^{\frac{\alpha}{2}}$ by  solving the harmonic extension problem to the upper half-space as the weighted operator that maps the Dirichlet boundary condition to the Neumann condition.

Let $f$ be a regular function in $\mathbb{R}^n.$ We say that $u(x,t)=P_{\alpha} f(x,t)$ is the Caffarelli-Silvestre extension  of $f$ to the upper half-space $\mathbb{R}^{n+1}_{+}:=\mathbb{R}^n\times (0,\infty),$ if $u$ is a solution to the problem
\begin{equation}\label{1}
\left\{
\begin{aligned}
\hbox{div}(t^{1-\alpha}\nabla u)&= 0,\quad \hbox{in}\quad \mathbb{R}^{n+1}_{+};\\
u &= f, \quad \hbox{on} \quad \mathbb{R}^n\times\{t=0\}.
\end{aligned}
\right.
\end{equation}
The Caffarelli-Silvestre extension is well defined for smooth functions through the Poisson  kernel $$p^{\alpha}_t(x)=\frac{c(n,\alpha)t^\alpha}{(|x|^2+t^2)^{{(n+\alpha)}/{2}}}$$ as
$$P_{\alpha} f(x,t)=p^{\alpha}_t\ast f(x,t)
=c(n,\alpha)\int_{\mathbb{R}^n}\frac{f(y)t^{\alpha}}{(|x-y|^2+t^2)^{{(n+\alpha)}/{2}}}dy.$$
Here  $f\ast g$ means the convolution of $f$ and $g,$ and $c(n,\alpha)=\frac{\Gamma({(n+\alpha)}/{2})}{\pi^{n/2}\Gamma({\alpha}/{2})}$ is the normalized constant such that $\int_{\mathbb{R}^n}p^{\alpha}_t(x)dx=1.$
Caffarelli and  Silvestre  \cite{Caffarelli}  proved that
\begin{equation}\label{2}
(-\triangle)^{\frac{\alpha}{2}} f(x)=-c_{\alpha}\lim_{t\rightarrow{0^+}}t^{1-\alpha}\partial_t u(x,t),\ c_{\alpha}=\frac{\Gamma(\alpha/2)}{2^{1-\alpha}\Gamma(1-\alpha/2)}.
\end{equation}
 This characterization has dramatically  popularized the application of  the fractional Laplacian.

The identity (\ref{2}) can be viewed as the consequence of the equality of the energy functionals
$$
\int_{\mathbb{R}^n}|2\pi\xi|^{\alpha}|\widehat{f}(\xi)|^2d\xi =D(n,\alpha)\int_{\mathbb{R}^{n+1}_{+}}|\nabla P_{\alpha} f(x,t)|^2t^{1-\alpha}dxdt
$$
which is equivalent to
\begin{equation}\label{3}
\|f\|_{\dot{W}^{\alpha/2,2}(\mathbb{R}^n)} =\|P_{\alpha} f(x,t)\|_{\dot{\mathbb{W}}^{1,2}_{\alpha}(\mathbb{R}^{n+1}_{+})}
\end{equation}
up to a multiplication constant. Here, $\dot{\mathbb{W}}^{1,2}_{\alpha}(\mathbb{R}^{n+1}_{+})$ is the weighted Sobolev space defined as
$$\dot{\mathbb{W}}^{1,2}_{\alpha}(\mathbb{R}^{n+1}_{+})=\left\{u(x,t)\in W^{1,1}_{loc}(\mathbb{R}^{n+1}_{+}):  \int_{\mathbb{R}^{n+1}_{+}}|\nabla u(x,t)|^2t^{1-\alpha}dxdt<\infty\right\}.$$
Let $C^{\infty}_{0}(\mathbb R^{n})$ stand for all infinitely smooth functions with compact support in $\mathbb R^{n}$. The homogeneous Sobolev space $\dot{W}^{\beta,p}(\mathbb{R}^{n})$  is the completion of $C_0^\infty(\mathbb{R}^n)$  with respect to the norm
\begin{equation*}
    \|f\|_{\dot{W}^{\beta,p}(\mathbb{R}^{n})}=\left\{
\begin{aligned}
&\|(-\triangle)^{\beta/2}f\|_{L^p(\mathbb{R}^n)}, \quad p\in (1,n/\beta);\\
&\left(\int_{\mathbb{R}^n}\frac{\|\triangle^k_{h}f\|^p_{L^p(\mathbb{R}^n)}}{|h|^{n+p\beta}}dh\right)^{1/p}, \quad p=1 \text{ or }  p=n/\beta,\ \beta\in (0,n),
\end{aligned}
    \right.
\end{equation*}
where $k=1+[\beta], \beta=[\beta]+\{\beta\}$ with $[\beta]\in \mathbb{Z}_+,$ $\{\beta\}\in(0,1)$ and
\begin{equation*}
\triangle^k_{h}f(x)=
\left\{\begin{aligned}
&\triangle^1_h\triangle^{k-1}_h f(x),\quad k>1;\\
&f(x+h)-f(x), \quad k=1.
\end{aligned}\right.
\end{equation*}

Equality (\ref{3}) allows us to identify  fractional  (logarithmic) Sobolev  inequalities as fractional (logarithmic) Sobolev trace inequalities, see \cite{Brandle, Nguyen}.  It also provides  us a way to view  the  fractional perimeters of a Borel set $E\subset\mathbb{R}^n$   as the norm of the Caffarelli-Silvestre extension of the $1_E$ (the characteristic function of $E$).  See \cite{CaffarelliRoquejoffreSavin, Fusco} and the references therein for more details on the fractional perimeter.

Motivated by (\ref{3}), in this paper, we characterize the following two embedding relations via the Caffarelli-Silvestre extension:

{\bf Embedding I: }  Given $\alpha\in (0,2)$ and a nonnegative Radon measure $\mu$ on $\mathbb{R}^{n+1}_{+}$,
\begin{equation}\label{5}
\|P_{\alpha} f(\cdot,\cdot)\|_{L^{q}(\mathbb{R}^{n+1}_+,\mu)}\lesssim \|f\|_{L^p(\mathbb R^n)}.
\end{equation}

 For $0<p,q<\infty$ and a nonnegative Radon measure $\mu$ on $\mathbb{R}^{n+1}_+,$
$L^{q,p}(\mathbb{R}^{n+1}_+,\mu)$ and $L^q(\mathbb{R}^{n+1}_+,\mu)$ denote the Lorentz space and the Lebesgue space of all functions on $\mathbb{R}^{n+1}_+$, respectively, for which
$$\|g\|_{L^{q,p}(\mathbb{R}^{n+1}_+,\mu)}=\left\{\int_0^\infty\left(\mu\left(\{x\in \mathbb{R}^{n+1}_+:\ |g(x)|>s \}\right)\right)^{p/q}ds^p\right\}^{1/p}<\infty$$
and
$$\|g|_{L^q(\mathbb{R}^{n+1}_+,\mu)}=\left(\int_{\mathbb{R}^{n+1}_+}|g(x)|^qd\mu\right)^{1/q}<\infty,$$
respectively. Moreover, we denote by $L^{q,\infty}(\mathbb{R}^{n+1}_+,\mu)$ the set of all $\mu-$measurable functions $g$ on $\mathbb{R}^{n+1}_+$ with
$$\|g\|_{L^{q,\infty}(\mathbb{R}^{n+1}_+,\mu)}=\sup_{s>0}s\left(\mu\left(\left\{x\in \mathbb{R}^{n+1}_+: |g(x)|>s \right\}\right)\right)^{1/q}<\infty.$$

The embedding (\ref{5}) will be characterized by conditions in terms of capacities and Hedberg-Wolff potentials of $\mu.$
Firstly, we introduce the  $L^p-$capacity associated with the Caffarelli-Silvestre extension.

\begin{definition}
Let $1\leq p<\infty$. For a subset $E$ of $\mathbb R^{n+1}_{+}$, let
$$C^{\alpha,p}_{\mathbb{R}^{n+1}_+}(E):=\inf\left\{\|f\|^{p}_{L^{p}(\mathbb R^{n})}:\ f\geq 0\ \&\ P_{\alpha}f(x,t)\geq 1\text{ for all }(x,t)\in E\right\}.$$
When the set $\Big\{f\in {L^{p}(\mathbb R^{n})}:\ f\geq 0\ \&\ P_{\alpha}f(x,t)\geq 1\text{ for all }(x,t)\in E\Big\}$ is empty, we set $C^{\alpha,p}_{\mathbb{R}^{n+1}_+}(E)=0$.

\end{definition}
Then, we establish the mixed norm estimate of $P_{\alpha}f(x,t),$ the dual form of  $C^{\alpha,p}_{\mathbb{R}^{n+1}_+}(\cdot),$ some  basic properties of   $C^{\alpha,p}_{\mathbb{R}^{n+1}_+}(\cdot)$ and a capacitary strong type inequality.

{\bf Embedding II: } Given $\alpha\in (0,2)$ and a nonnegative Radon measure $\mu$ on $\mathbb{R}^{n+1}_{+}$,
\begin{equation}\label{4}
  \|P_{\alpha} f(\cdot,\cdot)\|_{L^{q}(\mathbb{R}^{n+1}_+,\mu)}\lesssim \|f\|_{\dot{W}^{\beta,p}(\mathbb{R}^{n})}
\end{equation}
for $0<\beta<n, 1\leq p\leq {n}/{\beta}$ and $1<q<\infty.$

We will show that the embedding (\ref{4}) when $p=1$ can be characterized by conditions in terms of the  fractional perimeter of open sets for the endpoint cases and fractional  capacities  for general cases.  The fractional perimeter and the  fractional capacity  are defined, respectively, as follows.

\begin{definition}
Let $s\in(0,1)$. The fractional perimeter  is  defined as
$$Per_s(E)=\int_E\int_{\mathbb{R}^n\backslash{E}}\frac{1}{|x-y|^{n+s}}dxdy$$
for  a given measurable set $E\subseteq \mathbb{R}^n.$
\end{definition}
It follows from the definition of $\|f\|_{\dot{W}^{s,1}(\mathbb{R}^n)}$ that
$Per_{s}(E)=\frac{1}{2}\|1_E\|_{\dot{W}^{s,1}(\mathbb{R}^n)}.$ On the other hand, there holds
$$Per_s(E)=\frac{\Gamma((n+s)/2)}{2\pi^{n/2}\Gamma(s/2)}\int_{\mathbb{R}^{n+1}_+}|\nabla u_E(x,t)|^{2}t^{1-s}dxdt,$$
where $ u_E(\cdot,\cdot)$ is the solution to equation (\ref{1}) with $f=1_E.$

For the fractional perimeter,  Ambrosio-DePhilippis-Martinazzi in  \cite{Ambrosio Philippis Martinazzi} proved the  generalized coarea formula:
\begin{equation} \label{7}
\|f\|_{\dot{W}^{s,1}(\mathbb{R}^n)}=2\int_0^\infty Per_s\Big(\Big\{x:\ f(x)>t\Big\}\Big)dt
\end{equation}
for every nonnegative  $f\in \dot{W}^{s,1}(\mathbb{R}^n).$

 Denote by $T(O)$ the tent based on an open subset $O$ of $\mathbb{R}^n:$
$$T(O)=\left\{(x,r)\in \mathbb{R}^{n+1}_+:\ B(x,r)\subseteq O\right\}
$$
with $B(x,r)$ the open ball centered at $x\in\mathbb{R}^n$ with radius  $r>0$.

\begin{definition}
 Let  $\beta \in (0,n)$ and $p\in[1,n/\beta].$
 \item{(i)} The fractional capacity of an arbitrary set $S\subset \mathbb{R}^n,$
denoted by  $Cap_{\mathbb{R}^n}^{\beta,p}(S),$ is defined as
$$Cap_{\mathbb{R}^n}^{\beta,p}(S):=\inf\left\{\|f\|^p_{\dot{W}^{\beta,p}(\mathbb{R}^n)}: \ f\in C_0^\infty(\mathbb{R}^{n}),\  f\geq 0\text{ on }\mathbb{R}^n\ \&\ f\geq  1_S\right\}.$$

 \item{(ii)} For $t\in (0,\infty),$ the $(p,\beta)-$fractional capacity minimizing function associated with both $\dot{W}^{\beta,p}(\mathbb{R}^{n})$ and a nonnegative measure $\mu$ on $\mathbb{R}^{n+1}_+,$ denoted by $c^\beta_p(\mu,t),$ is set as
$$c^\beta_p(\mu,t):=\inf\left\{Cap_{\mathbb{R}^n}^{\beta,p}(O):\ \text{ bounded open }  O\subseteq \mathbb{R}^n,\ \mu(T(O))>t\right\}.$$
\end{definition}

This article is mainly motivated by the work on embeddings like (\ref{5}) and (\ref{4}) via classical/fractional heat equations.  Xiao in \cite{Xiao} studied  the  embeddings  of  the homogeneous Sobolev space $\dot{W}^{1,p}(\mathbb{R}^{n})$ into the Lebesgue space $L^{q} (\mathbb{R}^{n+1}_{+}
 ,\mu),$ under
$(p, q) \in (1,\infty) \times \mathbb{R}_{+},$   via the Gauss-Weierstrass heat
kernel.  For fractional diffusion equations, motivated by Xiao \cite{Xiao},  Zhai in \cite{Zhai}  explored the embeddings of the homogeneous Sobolev space $\dot{W}^{\beta,p}(\mathbb{R}^{n})$ into the Lebesgue space $L^{q} (\mathbb{R}^{n+1}_{+}
 ,\mu).$  By using the  $L^p-$capacities associated with the fractional heat kernel,  Chang-Xiao in \cite{Chang Xiao} and  Shi-Xiao in \cite{Shi Xiao}  established    embeddings similar to     (\ref{5}). For  applications of Poisson like kernel in function spaces, see \cite{Bui,Lenzmann} and the references therein.  

This article will be organized as follows. In Section 2.1, we investigate the  dual form  and  basic properties of the $L^p-$capacity $C^{\alpha,p}_{\mathbb{R}^{n+1}_+}(\cdot)$, and Section \ref{sec-2.2} is devoted to a capacitary strong type inequality corresponding to $C^{\alpha,p}_{\mathbb{R}^{n+1}_+}(\cdot)$. Section \ref{sec-2.3} describes several technical lemmas on the fractional capacity $Cap_{\mathbb{R}^n}^{\beta,p}(\cdot)$.
In Section \ref{sec-3}, we deduce the embedding (\ref{5}) for two cases $p\leq q$ and $p>q,$ respectively. Section \ref{sec-4} studies  the embedding (\ref{4}) for two cases $p\leq q$ and $p>q$.

{\it Some notations}: Let us agree to more conventions. ${\mathsf U}\approx{\mathsf V}$ represents that
there is a constant $c>0$ such that $c^{-1}{\mathsf V}\le{\mathsf
U}\le c{\mathsf V}$ whose right inequality is also written as
${\mathsf U}\lesssim{\mathsf V}$. Similarly, one writes ${\mathsf V}\gtrsim{\mathsf U}$ for
${\mathsf V}\ge c{\mathsf U}$.

\section{Preliminaries on capacities}\label{sec-2}

\subsection{$L^p-$ capacities associated with the Caffarelli-Silvestre extension}\label{sec-2.1}
In this section, $\alpha\in (0,2).$  We will first establish the dual form of the $L^p-$capacity associated with the Caffarelli-Silvestre extension. Then, we will prove some basic properties of the $L^p-$capacity and the capacitary strong type inequality for $C^{\alpha,p}_{\mathbb{R}^{n+1}_+}(\cdot)$.

To establish the adjoint formulation of the foregoing definition, we need to find out the adjoint operator  of $P_{\alpha}$. Note that
$$\int_{\mathbb R^{n+1}_{+}}P_{\alpha}f(x,t)g(x,t)dtdx=\int_{\mathbb R^{n}}f(x)\left(\int_{\mathbb R^{n+1}_{+}}p^{\alpha}_{t}(x-z)g(z,t)dzdt\right)dx$$
holds for all $f\in C^{\infty}_{0}(\mathbb R^{n})$ and $g\in C^{\infty}_{0}(\mathbb R^{n+1}_{+})$.
 The adjoint operator denoted by $P^{\ast}_{\alpha}$ can be defined as
$$(P^{\ast}_{\alpha}g)(x):=\int_{\mathbb R^{n+1}_{+}}p^{\alpha}_{t}(x-z)g(z,t)dzdt,\quad g\in C^{\infty}_{0}(\mathbb R^{n+1}_{+}).$$
For a Borel measure $\mu$ with compact support in $\mathbb R^{n+1}_{+}$, we define
$$P^{\ast}_{\alpha}\mu(x):=\int_{\mathbb R^{n+1}_{+}}p^{\alpha}_{t}(x-z)d\mu(z,t).$$

\begin{proposition}\label{prop-4.1}
Given $p\in (1,\infty)$ and a compact subset $K$ of $\mathbb R^{n+1}_{+}$, let $p'=p/(p-1)$ and $\mathcal M_{+}(K)$ be the class of nonnegative Radon measures supported on $K$. Then
\item{\rm (i)}
$$C^{\alpha,p}_{\mathbb{R}^{n+1}_+}(K)=\sup\left\{\|\mu\|^{p}_{1}:\ \mu\in \mathcal M_{+}(K)\ \&\ \|P^{\ast}_{\alpha}\mu\|_{L^{p'}(\mathbb R^{n})}\leq 1\right\}.$$
\item{\rm (ii)} There exists a measure $\mu_{K}\in\mathcal M_{+}(K)$ such that
$$
\mu_{K}(K)=\int_{\mathbb R^{n}}(P^{\ast}_{\alpha}\mu_{K}(x))^{p'}dx=\int_{\mathbb R^{n+1}_{+}}P_{\alpha}\left(P^{\ast}_{\alpha}\mu_{K}(x)\right)^{p'-1}d\mu_{K}=C^{\alpha,p}_{\mathbb{R}^{n+1}_+}(K).
$$
\end{proposition}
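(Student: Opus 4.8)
The plan is to realize $C^{\alpha,p}_{\mathbb{R}^{n+1}_+}(K)$ as the value of a convex minimization problem and then invoke a minimax/duality argument of the type used by Maz'ya and Adams for Riesz capacities. First I would rewrite the capacity as
$$C^{\alpha,p}_{\mathbb{R}^{n+1}_+}(K)=\inf\Big\{\|f\|_{L^p(\mathbb{R}^n)}^p:\ f\geq 0,\ P_\alpha f\geq 1\text{ on }K\Big\},$$
and observe that since $p^\alpha_t\geq 0$, replacing $f$ by $|f|$ never increases the norm while preserving the constraint, so the sign condition is harmless. The constraint $P_\alpha f(x,t)\geq 1$ for all $(x,t)\in K$ can be encoded via measures: $P_\alpha f\geq 1$ on $K$ is equivalent to $\int_K P_\alpha f\,d\mu\geq \mu(K)=\|\mu\|_1$ for every $\mu\in\mathcal M_+(K)$, together with the observation that the worst such $\mu$ detects pointwise failure. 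Using the adjoint identity $\int_{\mathbb{R}^{n+1}_+}P_\alpha f\,d\mu=\int_{\mathbb{R}^n}f\,P^\ast_\alpha\mu\,dx$ (valid for $\mu$ compactly supported in $\mathbb{R}^{n+1}_+$ since then $P^\ast_\alpha\mu$ is a bounded function with suitable decay), the constraint becomes $\int_{\mathbb{R}^n}f\,P^\ast_\alpha\mu\,dx\geq\|\mu\|_1$.

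For part (i), I would run the standard duality:
$$\Big(C^{\alpha,p}_{\mathbb{R}^{n+1}_+}(K)\Big)^{1/p}=\inf_{f}\sup_{\mu}\ \frac{\|\mu\|_1}{\ \ }\ \text{subject to }\|f\|_{L^p}\le 1,\ \int f\,P^\ast_\alpha\mu\,dx\ge \|\mu\|_1,$$
or more cleanly: for fixed admissible $f$ with $\|f\|_{L^p}\le 1$ and any $\mu\in\mathcal M_+(K)$, Hölder gives $\|\mu\|_1\le\int f\,P^\ast_\alpha\mu\,dx\le\|f\|_{L^p}\|P^\ast_\alpha\mu\|_{L^{p'}}\le\|P^\ast_\alpha\mu\|_{L^{p'}}$, which yields the inequality $\sup\{\|\mu\|_1^p:\ \|P^\ast_\alpha\mu\|_{L^{p'}}\le 1\}\le C^{\alpha,p}_{\mathbb{R}^{n+1}_+}(K)$ after normalizing $\mu$. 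The reverse inequality is the substantive direction: I would apply a minimax theorem (Ky Fan, or the Hahn–Banach separation argument as in Adams–Hedberg, Theorem 2.5.1) to the bilinear pairing on $L^p(\mathbb{R}^n)\times\mathcal M_+(K)$, using that $\mathcal M_+(K)$ is weak-$\ast$ compact (as $K$ is compact) and convex, and that $f\mapsto\|f\|_{L^p}^p$ is convex and lower semicontinuous. One must check that the admissible class of $f$ is nonempty — equivalently that $C^{\alpha,p}_{\mathbb{R}^{n+1}_+}(K)<\infty$ — which follows because $K\subset\mathbb{R}^{n+1}_+$ is compact, so $K\subseteq\{t\ge\delta\}\cap\{|x|\le R\}$ for some $\delta,R>0$, and a large constant multiple of, say, $1_{B(0,2R)}$ has Caffarelli–Silvestre extension bounded below on $K$ by the explicit lower bound $p^\alpha_t(x-y)\gtrsim t^\alpha/(R^2+t^2)^{(n+\alpha)/2}$.

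For part (ii), I would extract the extremal measure $\mu_K$ as the maximizer in (i): by weak-$\ast$ compactness of $\{\mu\in\mathcal M_+(K):\|P^\ast_\alpha\mu\|_{L^{p'}}\le 1\}$ and weak-$\ast$ upper semicontinuity of $\mu\mapsto\|\mu\|_1=\mu(K)$, the supremum is attained; rescaling gives the normalization $\mu_K(K)=\int_{\mathbb{R}^n}(P^\ast_\alpha\mu_K)^{p'}\,dx=C^{\alpha,p}_{\mathbb{R}^{n+1}_+}(K)$. The extremal function in the primal problem is then $f_K=(P^\ast_\alpha\mu_K)^{p'-1}$ up to normalization — the usual self-duality of $L^p$ — and the first-order (Euler–Lagrange) condition for the maximization of $\mu(K)$ over the constraint $\int(P^\ast_\alpha\mu)^{p'}\,dx\le 1$ is precisely
$$\int_{\mathbb{R}^{n+1}_+}P_\alpha\big((P^\ast_\alpha\mu_K)^{p'-1}\big)\,d\mu_K=\mu_K(K),$$
obtained by differentiating along $\mu_K+\varepsilon(\nu-\mu_K)$ and using the adjoint identity again. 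Assembling the three equalities gives the chain in (ii). The main obstacle I anticipate is the rigorous justification of the minimax exchange and of the adjoint identity $\int P_\alpha f\,d\mu=\int f\,P^\ast_\alpha\mu$ for the extremal (merely $L^p$, not smooth) $f$ and for general $\mu\in\mathcal M_+(K)$: this requires a density/approximation argument using that $p^\alpha_t$ is, for $(z,t)$ ranging in the compact $K$, a uniformly bounded and integrable-in-$x$ kernel, plus Fubini, and care that $P^\ast_\alpha\mu\in L^{p'}$ is not automatic but is exactly the constraint being imposed.
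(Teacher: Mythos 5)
Your proposal follows the paper's strategy essentially step for step: the easy Hölder bound for one inequality in (i), the minimax theorem of Adams--Hedberg for the reverse inequality, and weak-$\ast$ compactness of the constraint set in $\mathcal M_+(K)$ (together with lower semicontinuity of $\mu\mapsto P^\ast_\alpha\mu$) to extract the extremal measure in (ii). The one cosmetic difference is that you derive the third equality in (ii) from an Euler--Lagrange condition, whereas it already follows directly from the second equality via the adjoint identity,
$$\int_{\mathbb R^n}(P^\ast_\alpha\mu_K)^{p'}\,dx=\int_{\mathbb R^n}(P^\ast_\alpha\mu_K)^{p'-1}\,P^\ast_\alpha\mu_K\,dx=\int_{\mathbb R^{n+1}_+} P_\alpha\big((P^\ast_\alpha\mu_K)^{p'-1}\big)\,d\mu_K,$$
which is the same Fubini/Tonelli computation the paper exploits through the extremal function $f_K=(P^\ast_\alpha\mu_K)^{p'-1}$.
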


\begin{proof}
(i) We set
$$\widetilde{C}^{\alpha,p}_{\mathbb{R}^{n+1}_+}:=\sup\left\{\|\mu\|^{p}_{1}:\ \mu\in \mathcal M_{+}(K)\ \&\ \|P^{\ast}_{\alpha}\mu\|_{L^{p'}(\mathbb R^{n})}\leq 1\right\}.$$
Let $\mu\in \mathcal M_{+}(K)$ satisfy $\|P^{\ast}_{\alpha}\mu\|_{L^{p'}(\mathbb R^{n})}\leq 1$.
Since $\|\mu\|_{1}=\mu(K)$, for any $f\geq 0$ and $P_{\alpha}f\geq 1_{K}$,
\begin{eqnarray*}
\|\mu\|_{1}&\leq&\int_{\mathbb R^{n+1}_{+}}P_{\alpha}fd\mu=\int_{\mathbb R^{n}}f(x)P^{\ast}_{\alpha}\mu(x)dx
\leq\|f\|_{L^{p}(\mathbb R^{n})}\|P^{\ast}_{\alpha}\mu\|_{L^{p'}(\mathbb R^{n})}\leq\|f\|_{L^{p}(\mathbb R^{n})},
\end{eqnarray*}
which implies  $\widetilde{C}^{\alpha,p}_{\mathbb{R}^{n+1}_+}(K)\leq C^{\alpha,p}_{\mathbb{R}^{n+1}_+}(K)$.

Conversely, define
\begin{equation*}
\left\{\begin{array}{ll}
&\mathcal{X}:=\left\{\mu:\ \mu\in\mathcal M_{+}(K)\quad \&\quad  \mu(K)=1\right\};\\
&\mathcal{Y}:=\left\{f:\ 0< f\in L^{p}(\mathbb R^{n})\quad  \&\quad \|f\|_{L^{p}(\mathbb R^{n})}\leq 1\right\};\\
&\mathcal Z:=\left\{f:\ 0\leq f\in L^{p}(\mathbb R^{n})\quad  \&\quad  P_{\alpha}f\geq 1_{K}\right\};\\
&E(\mu, f):=\int_{\mathbb R^{n}}(P^{\ast}_{\alpha}\mu)(x)f(x)dx=\int_{\mathbb R^{n+1}_{+}} P_{\alpha}f(x,t)d\mu(x,t).
\end{array}
\right.
\end{equation*}

By \cite[Thorem 2.4.1]{AH}, $\min_{\mu\in\mathcal X}\sup_{f\in\mathcal Y}E(\mu, f)
=\sup_{f\in\mathcal Y}\min_{\mu\in\mathcal X}E(\mu, f)$. We can get

\begin{eqnarray*}
\min_{\mu\in\mathcal M_{+}(K)}\frac{\|P^{\ast}_{\alpha}\mu\|_{L^{p'}(\mathbb R^{n})}}{\mu(K)}
&=&\min_{\mu\in\mathcal M_{+}(K)}\frac{\sup_{0\leq f\in L^{p}(\mathbb R^{n}) \& \|f\|_{L^{p}(\mathbb R^{n})}\leq 1}\int_{\mathbb{R}^{n}}f(x) P^{\ast}_{\alpha}\mu (x) dx}{\mu(K)}\\
&=&\min_{\mu\in\mathcal M_{+}(K)}\frac{\sup_{ f\in L^{p}(\mathbb R^{n}) \& \|f\|_{L^{p}(\mathbb R^{n})}=1}\int_{\mathbb{R}^{n}}f(x) P^{\ast}_{\alpha}\mu (x) dx}{\mu(K)}\\
&\leq &\min_{\mu\in\mathcal M_{+}(K)}\sup_{f\in\mathcal Y}\frac{\int_{\mathbb{R}^{n}}f(x) P^{\ast}_{\alpha}\mu (x) dx}{\|f\|_{L^{p}(\mathbb R^{n})}\mu(K)}\\
&=&\sup_{f\in\mathcal Y}\min_{\mu\in\mathcal X}\frac{1}{\|f\|_{L^{p}(\mathbb R^{n})}}\left\{\int_{K}P_{\alpha}f(x,t)d\mu(x,t)\right\}\\
&=&\sup_{f\in\mathcal Y}\frac{1}{\|f\|_{L^{p}(\mathbb R^{n})}}\left(\min_{(x,t)\in K}P_{\alpha}f(x,t)\right)\min_{\mu\in\mathcal X}\mu(K)\\
&\leq&\sup_{0< f\in L^{p}(\mathbb R^{n}) }\frac{\left(\min_{(x,t)\in K}P_{\alpha}f(x,t)\right)}{\|f\|_{L^{p}(\mathbb R^{n})}}\\
&\leq & \left(\inf_{0\leq f \in L^{p}(\mathbb R^{n}) \ \&\  \min P_\alpha f=1}\|f\|_{L^{p}(\mathbb R^{n})}\right)^{-1}\\
&=&\left(C^{\alpha,p}_{\mathbb{R}^{n+1}_+}(K)\right)^{-1/p}.
\end{eqnarray*}

For any $\mu\in\mathcal M_{+}(K)$, take $\mu_{1}:=\|P^{\ast}_{\alpha}\mu\|^{-1}_{L^{p'}(\mathbb R^{n})}\mu.$
It is obvious that $\|P^{\ast}_{\alpha}\mu_{1}\|_{L^{p'}(\mathbb R^{n})}=1$, and consequently,
$$\Big(\widetilde{C}^{\alpha,p}_{\mathbb{R}^{n+1}_+}(K)\Big)^{1/p}\geq\sup\left\{\frac{\|\mu\|_{1}}{\|P^{\ast}_{\alpha}\mu\|_{L^{p'}(\mathbb R^{n})}}:\ \mu\in\mathcal M_{+}(K)\right\}=\sup\Big\{\left\|\mu_{1}\right\|_{1},\ \mu_{1}\in\mathcal M_{+}(K)\Big\},$$
which implies
$$\min_{\mu\in\mathcal M_{+}(K)}\left\{\frac{\|P^{\ast}_{\alpha}\mu\|_{L^{p'}(\mathbb R^{n})}}{\mu(K)}\right\}=\min_{\mu\in\mathcal M_{+}(K)}\left\{\frac{\|P^{\ast}_{\alpha}\mu\|_{L^{p'}(\mathbb R^{n})}}{\|\mu\|_{1}}\right\}\geq \left(\widetilde{C}^{\alpha,p}_{\mathbb{R}^{n+1}_+}(K)\right)^{-1/p}.$$
This gives
$\left({C}^{\alpha,p}_{\mathbb{R}^{n+1}_+}(K)\right)^{-1/p}\geq\left(\widetilde{C}^{\alpha,p}_{\mathbb{R}^{n+1}_+}(K)\right)^{-1/p}$. The proof of (i) is completed.

Next let us verify (ii). According to (i),
we select a sequence $\{\mu_{j}\}\subset \mathcal M_{+}(K)$ such that
$$\begin{cases}
&\lim\limits_{j\rightarrow\infty}(\mu_{j}(K))^{p}=C^{\alpha,p}_{\mathbb{R}^{n+1}_+}(K);\\
&\|P^{\ast}_{\alpha}\mu_{j}\|_{L^{p'}(\mathbb R^{n})}\leq 1.
\end{cases}$$
A direct computation implies that
$$C^{\alpha,p}_{\mathbb{R}^{n+1}_+}(K)=\sup\left\{\|\mu\|_{1}^{p}:\ \mu\in\mathcal M_{+}(K)\ \&\ \|P^{\ast}_{\alpha}\mu\|_{L^{p'}(\mathbb R^{n})}=1\right\}.$$
Then, using the fact $\|P^{\ast}_{\alpha}\mu_{j}\|_{L^{p'}(\mathbb R^{n})}=1$, we get
\begin{eqnarray*}
\left|\int_{\mathbb R^{n+1}_{+}}P_{\alpha}f(x,t)d\mu_{j}(x,t)\right|=\left|\int_{\mathbb R^{n}}f(x)P^{\ast}_{\alpha}\mu_{j}(x)dx\right|\leq \|P^{\ast}_{\alpha}\mu_{j}\|_{L^{p'}(\mathbb R^{n})}\|f\|_{L^{p}(\mathbb R^{n})}
\leq\|f\|_{L^{p}(\mathbb R^{n})}.
\end{eqnarray*}
There exists $\mu\in\mathcal M_{+}(K)$ such that $\mu_{j}$ weak $\ast$ convergence to $\mu$. Hence $\mu^{p}(K)=C^{\alpha,p}_{\mathbb{R}^{n+1}_+}(K)$. Since the Lebesgue space $L^{p}(\mathbb R^{n})$ is uniformly convex for $1<p<\infty$,  the set
$$\Big\{f\in C^{\infty}_{0}(\mathbb R^{n}):\ f\geq0\text{ on }\mathbb R^{n}\text{ and } f\geq1\Big\}$$
is a convex set. Following the procedure of \cite[ Theorem 2.3.10]{AH}, we can prove that there exists a unique function denoted by $f_{K}$ such that
\begin{equation}\label{eq-2.1}
\left\{\begin{aligned}
&f_{K}\in L^{p}(\mathbb R^{n});\\
&C_{\mathbb R^{n+1}_{+}}^{\alpha,p}(\{(x,t)\in K:\ P_{\alpha}f_{K}< 1\})=0; \\
&\|f_{K}\|^{p}_{L^{p}(\mathbb R^{n})}=C_{\mathbb R^{n+1}_{+}}^{\alpha,p}(K).
\end{aligned}\right.
\end{equation}
Then we have
\begin{eqnarray}\label{eq-2.2}
(C_{\mathbb R^{n+1}_{+}}^{\alpha,p}(K))^{1/p}=\mu(K)
&\leq&\int_{\mathbb R^{n+1}_{+}}P_{\alpha}f(x,t)d\mu(x,t)\\
&\leq&\int_{\mathbb R^{n}}f_{K}(x)P^{\ast}_{\alpha}\mu(x)dx\nonumber\\
&\leq&\|f_{K}\|_{L^{p}(\mathbb R^{n})}\|P^{\ast}_{\alpha}\mu\|_{L^{p'}(\mathbb R^{n})}\nonumber\\
&=&(C_{\mathbb R^{n+1}_{+}}^{\alpha,p}(K))^{1/p}\|P^{\ast}_{\alpha}\mu\|_{L^{p'}(\mathbb R^{n})},\nonumber
\end{eqnarray}
which implies that $\|P^{\ast}_{\alpha}\mu\|_{L^{p'}(\mathbb R^{n})}\geq 1$. On the other hand, following the procedure of \cite[Proposition 2.3.2, (b)]{AH}, we can prove that $\mu\longmapsto P^{\ast}_{\alpha}\mu(x)$ is lower semi-continuous on $ M_{+}(\mathbb R^{n+1}_{+})$ in the weak* topology, i.e.,
$$P^{\ast}_{\alpha}\mu(x)\leq \liminf_{j\rightarrow\infty}P^{\ast}_{\alpha}\mu_{j}(x),$$
which, together with $\|P^{\ast}_{\alpha}\mu_{j}\|_{L^{p'}(\mathbb R^{n})}\leq 1$, gives $\|P^{\ast}_{\alpha}\mu\|_{L^{p'}(\mathbb R^{n})}\leq 1$. Hence
 $\|P^{\ast}_{\alpha}\mu\|_{L^{p'}(\mathbb R^{n})}=1$. Taking $\mu_{K}=(C^{\alpha,p}_{\mathbb{R}^{n+1}_+}(K))^{1/p'}\mu$ yields
\begin{eqnarray*}
\mu_{K}(K)&=&\int_{K}(C^{\alpha,p}_{\mathbb{R}^{n+1}_+}(K))^{1/p'}d\mu
=(C^{\alpha,p}_{\mathbb{R}^{n+1}_+}(K))^{1/p'}\mu(K)\\
&=&(C^{\alpha,p}_{\mathbb{R}^{n+1}_+}(K))^{1/p'}(C^{\alpha,p}_{\mathbb{R}^{n+1}_+}(K))^{1/p}=C^{\alpha,p}_{\mathbb{R}^{n+1}_+}(K).
\end{eqnarray*}
On the other hand,
\begin{eqnarray*}
\int_{\mathbb R^{n}}(P^{\ast}_{\alpha}\mu_{K}(x))^{p'}dx=\|P^{\ast}_{\alpha}\mu_{K}\|^{p'}_{L^{p'}(\mathbb R^{n})}=(C^{\alpha,p}_{\mathbb{R}^{n+1}_+}(K))\|P^{\ast}_{\alpha}\mu\|^{p'}_{L^{p'}(\mathbb R^{n})}=C^{\alpha,p}_{\mathbb{R}^{n+1}_+}(K).
\end{eqnarray*}
This indicates that
\begin{equation}\label{eq-4.1}
\mu_{K}(K)=\int_{\mathbb R^{n}}(P^{\ast}_{\alpha}\mu_{K}(x))^{p'}dx=C^{\alpha,p}_{\mathbb{R}^{n+1}_+}(K).
\end{equation}
Let $f_{K}$ be the function mentioned above. Then 
 $\mu_{K}\left(\left\{(x,t)\in K:\ P_{\alpha}f_{K}(x,t)\leq 1\right\}\right)=0$.
 
By H\"older's inequality, we can get
\begin{eqnarray}\label{eq-4.2}
C^{\alpha,p}_{\mathbb{R}^{n+1}_+}(K)&=&\mu_{K}(K)\leq\int_{K}P_{\alpha}f_{K}d\mu_{K}\\
&\leq&\int_{\mathbb R^{n}}f_{K}(x)P^{\ast}_{\alpha}\mu_{K}(x)dx\nonumber\\
&\leq&\|f_{K}\|_{L^{p}(\mathbb R^{n})}\|P^{\ast}_{\alpha}\mu_{K}\|_{L^{p'}(\mathbb
 R^{n})}\nonumber\\
&=&(C^{\alpha,p}_{\mathbb{R}^{n+1}_+}(K))^{1/p}(C^{\alpha,p}_{\mathbb{R}^{n+1}_+}(K))^{1/p'}=C^{\alpha,p}_{\mathbb{R}^{n+1}_+}(K).\nonumber
\end{eqnarray}
It follows from (\ref{eq-4.2}) that
$$C^{\alpha,p}_{\mathbb{R}^{n+1}_+}(K)=\int_{\mathbb R^{n}}f_{K}P^{\ast}_{\alpha}\mu_{K}dx.$$
Hence
$$\int_{\mathbb R^{n}}f_{K}P^{\ast}_{\alpha}\mu_{K}dx=\int_{\mathbb R^{n}}(P^{\ast}_{\alpha}\mu_{K})^{p'}dx=C^{\alpha,p}_{\mathbb{R}^{n+1}_+}(K).$$
The above identity implies that
$$C^{\alpha,p}_{\mathbb{R}^{n+1}_+}(K)=\int_{\mathbb R^{n+1}_{+}}P_{\alpha}f_{K}d\mu_{K}=\int_{\mathbb R^{n+1}_{+}}P_{\alpha}(P^{\ast}_{\alpha}\mu_{K})^{p'-1}d\mu_{K},$$
which completes the proof of Proposition \ref{prop-4.1}.
\end{proof}

\begin{remark}
The inequality (\ref{eq-4.2}) implies that
$$\int_{\mathbb R^{n}}f_{K}(x)P^{\ast}_{\alpha}\mu_{K}(x)dx=\|f_{K}\|_{L^{p}(\mathbb R^{n})}\|P^{\ast}_{\alpha}\mu_{K}\|_{L^{p'}(\mathbb R^{n})},$$
i.e., we have equality in H\"older inequality. This means that, due to the normalization chosen, $(f_{K})^{p}=(P^{\ast}_{\alpha}\mu_{K})^{p'}$.
\end{remark}

Below we investigate some basic properties of $C^{\alpha,p}_{\mathbb{R}^{n+1}_+}(\cdot)$.
\begin{proposition}\label{prop-4.2}
\item{\rm (i)} $C^{\alpha,p}_{\mathbb{R}^{n+1}_+}(\varnothing)=0$;
\item{\rm (ii)} If $E_{1}\subseteq E_{2}\subset\mathbb R^{n+1}_{+}$, then $C^{\alpha,p}_{\mathbb{R}^{n+1}_+}(E_{1})\leq C^{\alpha,p}_{\mathbb{R}^{n+1}_+}(E_{2})$;
\item{\rm (iii)} For any sequence $\{E_{j}\}^{\infty}_{j=1}$ of subsets of $\mathbb R^{n+1}_{+}$,
$$C^{\alpha,p}_{\mathbb{R}^{n+1}_+}\left(\bigcup^{\infty}_{j=1}E_{j}\right)\leq \sum^{\infty}_{j=1}C^{\alpha,p}_{\mathbb{R}^{n+1}_+}(E_{j});$$
\item{\rm (iv)} For any $E\subset \mathbb R^{n+1}_{+}$ and any $x_{0}\in \mathbb R^{n}$, $C^{\alpha,p}_{\mathbb{R}^{n+1}_+}(E+(x_{0},0))=C^{\alpha,p}_{\mathbb{R}^{n+1}_+}(E)$.
\end{proposition}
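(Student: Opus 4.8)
The plan is to obtain all four assertions by the soft arguments standard in capacity theory (cf.\ \cite[Ch.~2]{AH}), exploiting only two structural features of $P_\alpha$: the Poisson kernel $p^{\alpha}_t$ is nonnegative, so $P_\alpha$ is order preserving (if $0\le f\le g$ then $0\le P_\alpha f\le P_\alpha g$ pointwise on $\mathbb R^{n+1}_+$); and $p^{\alpha}_t(x-y)$ depends on $x,y$ only through $x-y$, so $P_\alpha$ intertwines horizontal translations. Assertion (i) is immediate: $f\equiv 0$ is nonnegative and vacuously satisfies $P_\alpha f\ge 1$ on $\varnothing$, so $0\le C^{\alpha,p}_{\mathbb R^{n+1}_+}(\varnothing)\le \|0\|_{L^p(\mathbb R^n)}^p=0$.

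For (ii), if $E_1\subseteq E_2$ then any competitor $f$ for $E_2$ (a nonnegative $f\in L^p(\mathbb R^n)$ with $P_\alpha f\ge 1$ on $E_2$) is also a competitor for $E_1$; the infimum defining $C^{\alpha,p}_{\mathbb R^{n+1}_+}(E_1)$ therefore runs over a larger admissible family than that defining $C^{\alpha,p}_{\mathbb R^{n+1}_+}(E_2)$, and the claim follows (with the degenerate case of an empty admissible family read off from the convention in the definition). For (iv), fix $x_0\in\mathbb R^n$ and set $\tau_{x_0}f(y):=f(y-x_0)$. The change of variables $u=y-x_0$ in the convolution integral gives $P_\alpha(\tau_{x_0}f)(x,t)=P_\alpha f(x-x_0,t)$, so $f\ge 0$ is a competitor for $E$ if and only if $\tau_{x_0}f\ge 0$ is a competitor for $E+(x_0,0)$; since $\|\tau_{x_0}f\|_{L^p(\mathbb R^n)}=\|f\|_{L^p(\mathbb R^n)}$, the two infima agree, and applying this with $x_0$ replaced by $-x_0$ gives the reverse inequality, hence equality.

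The one assertion with content is the countable subadditivity (iii). We may assume $\sum_{j}C^{\alpha,p}_{\mathbb R^{n+1}_+}(E_j)<\infty$, since otherwise there is nothing to prove. Fix $\varepsilon>0$; for each $j$ choose a nonnegative $f_j\in L^p(\mathbb R^n)$ with $P_\alpha f_j\ge 1$ on $E_j$ and $\|f_j\|_{L^p(\mathbb R^n)}^p\le C^{\alpha,p}_{\mathbb R^{n+1}_+}(E_j)+2^{-j}\varepsilon$, and put $f:=\sup_{j\ge 1}f_j$, a nonnegative measurable function. Pointwise $f^p\le\sum_j f_j^p$, so by Tonelli's theorem $\|f\|_{L^p(\mathbb R^n)}^p\le\sum_j\|f_j\|_{L^p(\mathbb R^n)}^p\le\sum_j C^{\alpha,p}_{\mathbb R^{n+1}_+}(E_j)+\varepsilon<\infty$; in particular $f\in L^p(\mathbb R^n)$. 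Since $p^{\alpha}_t\ge 0$ and $f\ge f_j$, we have $P_\alpha f\ge P_\alpha f_j\ge 1$ on $E_j$ for every $j$, hence $P_\alpha f\ge 1$ on $\bigcup_j E_j$, so $f$ is admissible for $\bigcup_j E_j$. Therefore $C^{\alpha,p}_{\mathbb R^{n+1}_+}(\bigcup_j E_j)\le\|f\|_{L^p(\mathbb R^n)}^p\le\sum_j C^{\alpha,p}_{\mathbb R^{n+1}_+}(E_j)+\varepsilon$, and letting $\varepsilon\to 0^+$ completes the proof.

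The main (and only) obstacle lies in (iii): one must glue the near-optimal competitors $f_j$ together while keeping the $L^p$ norm under control, and it is precisely the pointwise bound $(\sup_j f_j)^p\le\sum_j f_j^p$ combined with the order preservation of $P_\alpha$ that makes taking the \emph{supremum} (rather than a sum) the correct construction. Everything else is formal; the only point requiring a word of care, both here and in (ii), is the interpretation of infima over possibly empty admissible classes, which is governed by the convention fixed in the definition of $C^{\alpha,p}_{\mathbb R^{n+1}_+}$.
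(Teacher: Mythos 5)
Your proof is correct and follows essentially the same route as the paper's: (i) and (ii) read off the definition, (iv) uses translation of the convolution kernel, and (iii) takes $f=\sup_j f_j$ of near-optimal competitors and bounds $\|f\|_{L^p}^p\le\sum_j\|f_j\|_{L^p}^p$. The only difference is expository (you spell out the pointwise inequality $(\sup_j f_j)^p\le\sum_j f_j^p$ and the order-preservation of $P_\alpha$, which the paper leaves implicit).
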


\begin{proof}
The statements (i)\ \&\ (ii) can be deduced from the definition of $C^{\alpha,p}_{\mathbb{R}^{n+1}_+}(\cdot)$ immediately.
For (iii), let $\epsilon>0$. Take $f_{j}\geq0$ such that  $P_{\alpha}f_{j}\geq 1$ on $E_{j}$ and $\int_{\mathbb R^{n}}|f_{j}(x)|^{p}dx\leq C^{\alpha,p}_{\mathbb{R}^{n+1}_+}(E_{j})+\epsilon/2^{j}$. Let $f=\sup\limits_{j\in\mathbb N_{+}}f_{j}$. For any $(x,t)\in \cup^{\infty}_{j=1}E_{j}$, there exists a $j_{0}$ such that $(x,t)\in E_{j_{0}}$ and $P_{\alpha}f_{j_{0}}(x,t)\geq 1$. Hence $P_{\alpha}f(x,t)\geq 1$ on $\cup^{\infty}_{j=1}E_{j}$. On the other hand,
$$\|f\|^{p}_{L^{p}(\mathbb R^{n})}=\int_{\mathbb R^{n}}|f(x)|^{p}dx\leq \sum^{\infty}_{j=1}\int_{\mathbb R^{n}}|f_{j}(x)|^{p}dx=\sum^{\infty}_{j=1}C^{\alpha,p}_{\mathbb{R}^{n+1}_+}(E_{j})+\epsilon,$$
which indicates $C^{\alpha,p}_{\mathbb{R}^{n+1}_+}(\cup^{\infty}_{j=1}E_{j})\leq \sum\limits^{\infty}_{j=1}C^{\alpha,p}_{\mathbb{R}^{n+1}_+}(E_{j}).$

Now we verify (iv). Define $f_{x_{0}}(x)=f(x+x_{0})$. Then $\|f\|_{L^{p}(\mathbb R^{n})}=\|f_{x_{0}}\|_{L^{p}(\mathbb R^{n})}$. If $(x,t)\in E+(x_{0}, 0)$, then $(x-x_{0}, t)\in E$ and vice versa. Take $f\geq0$ such that $P_{\alpha}f\geq 1_{E}$.  Changing of variables reaches
\begin{eqnarray*}
P_{\alpha}f_{x_{0}}(x,t)&=&\int_{\mathbb R^{n}}p^{\alpha}_{t}(x-y)f_{x_{0}}(y)dy=P_{\alpha}f(x-x_{0},t),
\end{eqnarray*}
which implies that
$P_{\alpha}f_{x_{0}}(x,t)\geq 1_{E+(x_{0},0)}$ is equivalent to $P_{\alpha}f(x-x_{0},t)\geq 1_{E}.$
This gives $C^{\alpha,p}_{\mathbb{R}^{n+1}_+}(E+(x_{0},0))=C^{\alpha,p}_{\mathbb{R}^{n+1}_+}(E)$ and the proof of Proposition \ref{prop-4.2} is completed.
\end{proof}
We then prove that the capacity $C^{\alpha,p}_{\mathbb{R}^{n+1}_+}(\cdot)$ is an outer capacity, i.e.,
\begin{proposition}\label{prop-2.1}
For any $E\subset\mathbb R^{n+1}_{+}$,
$$C^{\alpha,p}_{\mathbb{R}^{n+1}_+}(E)=\inf\Big\{C^{\alpha,p}_{\mathbb{R}^{n+1}_+}(O):\ O\supset E, O\text{ open}\Big\}.$$
\end{proposition}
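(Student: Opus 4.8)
The plan is to prove the two inequalities separately. The inequality $C^{\alpha,p}_{\mathbb{R}^{n+1}_+}(E)\le\inf\{C^{\alpha,p}_{\mathbb{R}^{n+1}_+}(O):\ O\supset E,\ O\text{ open}\}$ is immediate from the monotonicity in Proposition \ref{prop-4.2}(ii): any open $O\supseteq E$ satisfies $C^{\alpha,p}_{\mathbb{R}^{n+1}_+}(E)\le C^{\alpha,p}_{\mathbb{R}^{n+1}_+}(O)$, so the bound passes to the infimum.

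For the reverse inequality I may assume $C^{\alpha,p}_{\mathbb{R}^{n+1}_+}(E)<\infty$, since otherwise there is nothing to prove. Fix $\varepsilon>0$ and pick an admissible $f\ge 0$ with $f\in L^{p}(\mathbb R^{n})$, $P_{\alpha}f(x,t)\ge 1$ on $E$, and $\|f\|^{p}_{L^{p}(\mathbb R^{n})}\le C^{\alpha,p}_{\mathbb{R}^{n+1}_+}(E)+\varepsilon$. The crucial point — and essentially the only real work — is to show that $P_{\alpha}f$ is lower semicontinuous on $\mathbb{R}^{n+1}_{+}$. This follows from Fatou's lemma: if $(x_{k},t_{k})\to(x_{0},t_{0})\in\mathbb R^{n+1}_{+}$, so $t_{0}>0$, then the nonnegative integrands $p^{\alpha}_{t_{k}}(x_{k}-y)f(y)$ converge pointwise in $y$ to $p^{\alpha}_{t_{0}}(x_{0}-y)f(y)$, because $(x,t)\mapsto t^{\alpha}/(|x-y|^{2}+t^{2})^{(n+\alpha)/2}$ is continuous at $(x_{0},t_{0})$ whenever $t_{0}>0$ (the denominator is bounded below by $t_{0}^{\,n+\alpha}>0$); hence $\liminf_{k}P_{\alpha}f(x_{k},t_{k})\ge P_{\alpha}f(x_{0},t_{0})$.

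Granting this lower semicontinuity, for each $\lambda\in(0,1)$ the superlevel set $O_{\lambda}:=\{(x,t)\in\mathbb R^{n+1}_{+}:\ P_{\alpha}f(x,t)>\lambda\}$ is open, and it contains $E$ since $P_{\alpha}f\ge 1>\lambda$ on $E$. The function $f/\lambda$ is a nonnegative element of $L^{p}(\mathbb R^{n})$ with $P_{\alpha}(f/\lambda)=P_{\alpha}f/\lambda>1$ on $O_{\lambda}$, hence admissible for $O_{\lambda}$, which gives $C^{\alpha,p}_{\mathbb{R}^{n+1}_+}(O_{\lambda})\le\lambda^{-p}\|f\|^{p}_{L^{p}(\mathbb R^{n})}\le\lambda^{-p}\big(C^{\alpha,p}_{\mathbb{R}^{n+1}_+}(E)+\varepsilon\big)$. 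Therefore $\inf\{C^{\alpha,p}_{\mathbb{R}^{n+1}_+}(O):\ O\supset E,\ O\text{ open}\}\le\lambda^{-p}\big(C^{\alpha,p}_{\mathbb{R}^{n+1}_+}(E)+\varepsilon\big)$; letting $\lambda\to 1^{-}$ and then $\varepsilon\to 0$ closes the argument.

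The only genuine obstacle is the lower semicontinuity of $P_{\alpha}f$ for $f\in L^{p}(\mathbb R^{n})$; once that Fatou argument is in place, the rest is routine manipulation of the definition of $C^{\alpha,p}_{\mathbb{R}^{n+1}_+}(\cdot)$ together with its monotonicity and positive homogeneity (of degree $p$) under scaling of the test function. An equivalent formulation, if one prefers to avoid dividing by $\lambda$, is to replace $f$ by $(1+\delta)f$ and take $O_{\delta}=\{P_{\alpha}((1+\delta)f)>1\}$, then send $\delta\to 0^{+}$; this changes nothing essential.
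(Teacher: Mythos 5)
Your proof is correct and follows essentially the same route as the paper: near-optimal admissible $f$, lower semicontinuity of $P_{\alpha}f$, the open superlevel set, and the rescaled test function. The only (welcome) difference is that you supply the Fatou argument for lower semicontinuity, which the paper asserts without proof; the two-parameter $(\lambda,\varepsilon)$ bookkeeping is a cosmetic variant of the paper's single-$\epsilon$ version.
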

\begin{proof}
Without loss of generality, we assume that $C^{\alpha,p}_{\mathbb{R}^{n+1}_+}(E)<\infty$. By (ii) of Proposition \ref{prop-4.2},
 $$C^{\alpha,p}_{\mathbb{R}^{n+1}_+}(E)\leq\inf\Big\{C^{\alpha,p}_{\mathbb{R}^{n+1}_+}(O):\ O\supset E, O\text{ open}\Big\}.$$
 For $\epsilon\in(0,1)$, there exists a measurable, nonnegative function $f$ such that $P_{\alpha}f\geq 1$ on $E$ and
 $$\int_{\mathbb R^{n}}|f(x)|^{p}dx\leq C^{\alpha,p}_{\mathbb{R}^{n+1}_+}(E)+\epsilon.$$
Since $P_{\alpha}f$ is lower semi-continuous, then the set $O_{\epsilon}:=\{(x,t)\in\mathbb R^{n+1}_{+}:\ P_{\alpha}f(x,t)>1-\epsilon\}$ is an open set.
On the other hand, $E\subset O_{\epsilon}$. This implies that
$$C^{\alpha,p}_{\mathbb{R}^{n+1}_+}(O_{\epsilon})\leq \frac{1}{(1-\epsilon)^{p}}\int_{\mathbb R^{n}}|f(x)|^{p}dx<\frac{1}{(1-\epsilon)^{p}}(C^{\alpha,p}_{\mathbb{R}^{n+1}_+}(E)+\epsilon).$$
The arbitrariness of $\epsilon$ indicates that
$$C^{\alpha,p}_{\mathbb{R}^{n+1}_+}(E)\geq\inf\Big\{C^{\alpha,p}_{\mathbb{R}^{n+1}_+}(O):\ O\supset E, O\text{ open}\Big\}.$$
\end{proof}
An immediate corollary of Proposition \ref{prop-2.1} is the follow result.
\begin{corollary}
If $\{K_{j}\}^{\infty}_{j=1}$  is a decreasing sequence of compact sets, then
$$C^{\alpha,p}_{\mathbb{R}^{n+1}_+}(\cap^{\infty}_{j=1}K_{j})=\lim_{j\rightarrow\infty} C^{\alpha,p}_{\mathbb{R}^{n+1}_+}(K_{j}).$$
\end{corollary}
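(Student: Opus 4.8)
The plan is to combine the monotonicity of $C^{\alpha,p}_{\mathbb{R}^{n+1}_+}(\cdot)$ recorded in Proposition \ref{prop-4.2}(ii) with the outer regularity established in Proposition \ref{prop-2.1}, following the classical scheme for Choquet-type capacities.

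First I would note that $\{C^{\alpha,p}_{\mathbb{R}^{n+1}_+}(K_j)\}_{j\geq 1}$ is a non-increasing sequence of nonnegative numbers: since $K_{j+1}\subseteq K_j$, Proposition \ref{prop-4.2}(ii) gives $C^{\alpha,p}_{\mathbb{R}^{n+1}_+}(K_{j+1})\leq C^{\alpha,p}_{\mathbb{R}^{n+1}_+}(K_j)$, so $\lim_{j\to\infty}C^{\alpha,p}_{\mathbb{R}^{n+1}_+}(K_j)$ exists. Writing $K:=\bigcap_{j=1}^{\infty}K_j$, the inclusion $K\subseteq K_j$ for every $j$ and the same monotonicity yield $C^{\alpha,p}_{\mathbb{R}^{n+1}_+}(K)\leq\lim_{j\to\infty}C^{\alpha,p}_{\mathbb{R}^{n+1}_+}(K_j)$, which is the easy half of the identity.

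For the reverse inequality I would invoke Proposition \ref{prop-2.1}: it suffices to prove that $\lim_{j\to\infty}C^{\alpha,p}_{\mathbb{R}^{n+1}_+}(K_j)\leq C^{\alpha,p}_{\mathbb{R}^{n+1}_+}(O)$ for every open set $O$ with $O\supseteq K$, and then take the infimum over all such $O$. The crux is a compactness argument: the sets $K_j\setminus O$ are compact, decreasing in $j$, and $\bigcap_{j=1}^{\infty}(K_j\setminus O)=K\setminus O=\varnothing$; by the finite intersection property for compact sets there is some $j_0$ with $K_{j_0}\setminus O=\varnothing$, i.e. $K_{j_0}\subseteq O$. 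Then Proposition \ref{prop-4.2}(ii) gives $\lim_{j\to\infty}C^{\alpha,p}_{\mathbb{R}^{n+1}_+}(K_j)\leq C^{\alpha,p}_{\mathbb{R}^{n+1}_+}(K_{j_0})\leq C^{\alpha,p}_{\mathbb{R}^{n+1}_+}(O)$, and taking the infimum over all open $O\supseteq K$ together with Proposition \ref{prop-2.1} yields $\lim_{j\to\infty}C^{\alpha,p}_{\mathbb{R}^{n+1}_+}(K_j)\leq C^{\alpha,p}_{\mathbb{R}^{n+1}_+}(K)$, which closes the argument.

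I do not expect a serious obstacle here. The only mildly delicate point is the passage from ``$O$ contains the intersection $K$'' to ``$O$ contains some individual $K_{j_0}$'', which relies on the compactness of each $K_j$ (so that a nested family of nonempty compact subsets of $O^{c}$ would have nonempty intersection) and, crucially, on having the outer regularity of Proposition \ref{prop-2.1} available so that open neighbourhoods of $K$ suffice to compute $C^{\alpha,p}_{\mathbb{R}^{n+1}_+}(K)$; without that one cannot reduce the estimate of $C^{\alpha,p}_{\mathbb{R}^{n+1}_+}(K)$ to the $C^{\alpha,p}_{\mathbb{R}^{n+1}_+}(O)$'s.
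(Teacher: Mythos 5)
Your proposal is correct and takes essentially the same approach as the paper: both establish the easy inequality by monotonicity, and both close the argument by combining outer regularity (Proposition \ref{prop-2.1}) with the observation that any open set $O$ containing $\bigcap_j K_j$ must already contain some $K_{j_0}$. You merely spell out the compactness/finite-intersection-property argument behind that last observation, which the paper states without elaboration.
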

\begin{proof}
Let $O$ be any open set satisfying $\cap^{\infty}_{j=1}K_{j}\subset O$. Then for some $j$, $K_{j}\subset O$, which, together with Proposition \ref{prop-2.1}, gives
$$C^{\alpha,p}_{\mathbb{R}^{n+1}_+}(\cap^{\infty}_{j=1}K_{j})\leq \lim_{j\rightarrow\infty} C^{\alpha,p}_{\mathbb{R}^{n+1}_+}(K_{j})\leq
\inf_{\cap^{\infty}_{j=1}K_{j}\subset O}C^{\alpha,p}_{\mathbb{R}^{n+1}_+}(O)=C^{\alpha,p}_{\mathbb{R}^{n+1}_+}(\cap^{\infty}_{j=1}K_{j}).$$
\end{proof}
\begin{proposition}\label{prop-2.2}
Let $1<p<\infty$. If $\{E_{j}\}^{\infty}_{j=1}$ is an increasing sequence of arbitrary subsets of $\mathbb R^{n}$, then
$$C^{\alpha,p}_{\mathbb{R}^{n+1}_+}(\cup^{\infty}_{j=1}E_{j})=\lim_{j\rightarrow\infty}C^{\alpha,p}_{\mathbb{R}^{n+1}_+}(E_{j}).$$
\end{proposition}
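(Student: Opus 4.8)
The plan is to prove the nontrivial inequality $C^{\alpha,p}_{\mathbb{R}^{n+1}_+}\big(\bigcup_{j}E_{j}\big)\le\lim_{j\to\infty}C^{\alpha,p}_{\mathbb{R}^{n+1}_+}(E_{j})$, since the reverse inequality is immediate from the monotonicity (ii) of Proposition \ref{prop-4.2}, which also shows that $\{C^{\alpha,p}_{\mathbb{R}^{n+1}_+}(E_{j})\}_{j}$ is nondecreasing, so $M:=\lim_{j\to\infty}C^{\alpha,p}_{\mathbb{R}^{n+1}_+}(E_{j})$ exists in $[0,\infty]$. Put $E:=\bigcup_{j=1}^{\infty}E_{j}$. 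If $M=\infty$ there is nothing to prove. If the admissible class $\{f\ge0:\ P_{\alpha}f\ge1 \text{ on } E_{j_{0}}\}$ is empty for some $j_{0}$, then, because the admissible classes are nested and shrink along an increasing sequence of sets, the admissible class of $E_{k}$ is empty for every $k\ge j_{0}$ and so is that of $E$; hence both sides equal $0$. So we may assume $M<\infty$ and that each $E_{j}$ has finite capacity with nonempty admissible class.

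For each $j$, choose $f_{j}\in L^{p}(\mathbb R^{n})$ with $f_{j}\ge0$, $P_{\alpha}f_{j}\ge1$ on $E_{j}$, and $\|f_{j}\|_{L^{p}(\mathbb R^{n})}^{p}\le C^{\alpha,p}_{\mathbb{R}^{n+1}_+}(E_{j})+2^{-j}$. Then $\sup_{j}\|f_{j}\|_{L^{p}(\mathbb R^{n})}<\infty$, and since $1<p<\infty$ the space $L^{p}(\mathbb R^{n})$ is reflexive; passing to a subsequence (still denoted $f_{j}$) we may assume $f_{j}\rightharpoonup f$ weakly in $L^{p}(\mathbb R^{n})$ for some $f\in L^{p}(\mathbb R^{n})$. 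The cone of nonnegative functions is convex and norm-closed, hence weakly closed, so $f\ge0$ a.e.; after redefining $f$ on a null set (which does not affect $P_{\alpha}f$) we may take $f\ge0$ everywhere. By weak lower semicontinuity of the norm, $\|f\|_{L^{p}(\mathbb R^{n})}^{p}\le\liminf_{j}\|f_{j}\|_{L^{p}(\mathbb R^{n})}^{p}\le M$.

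The crucial point is that the pointwise constraint defining the capacity is inherited by the weak limit. Fix $(x,t)\in\mathbb R^{n+1}_{+}$. Since $p^{\alpha}_{t}(x-\cdot)$ is bounded on $\mathbb R^{n}$ (because $t>0$) and decays like $|y|^{-(n+\alpha)}$ at infinity, we have $p^{\alpha}_{t}(x-\cdot)\in L^{p'}(\mathbb R^{n})$ with $\|p^{\alpha}_{t}(x-\cdot)\|_{L^{p'}(\mathbb R^{n})}\lesssim t^{-n/p}$; consequently $g\mapsto P_{\alpha}g(x,t)=\int_{\mathbb R^{n}}p^{\alpha}_{t}(x-y)g(y)\,dy$ is a bounded linear functional on $L^{p}(\mathbb R^{n})$, so $P_{\alpha}f_{j}(x,t)\to P_{\alpha}f(x,t)$. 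Now if $(x,t)\in E$, then $(x,t)\in E_{j_{0}}$ for some $j_{0}$, hence $(x,t)\in E_{j}$ and $P_{\alpha}f_{j}(x,t)\ge1$ for all $j\ge j_{0}$; letting $j\to\infty$ yields $P_{\alpha}f(x,t)\ge1$. Thus $f$ is admissible for $E$, so $C^{\alpha,p}_{\mathbb{R}^{n+1}_+}(E)\le\|f\|_{L^{p}(\mathbb R^{n})}^{p}\le M$, which together with the trivial direction finishes the proof.

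The only genuinely delicate issue is the one addressed in the third paragraph: the requirement $P_{\alpha}f\ge1$ at \emph{every} point of $E$ is not preserved by weak $L^{p}$ limits in general, and it is rescued here precisely because $P_{\alpha}$ is convolution against a kernel in $L^{p'}$, so every point evaluation is weak$^{*}$ continuous. The other ingredients — monotonicity, reflexivity of $L^{p}$ for $1<p<\infty$, and weak lower semicontinuity of the norm — are standard, and a single weakly convergent subsequence suffices because the sets increase, so the admissibility of $f_{j}$ on $E_{j}$ automatically propagates to each $E_{j_{0}}$ with $j_{0}\le j$. (This parallels the classical continuity from below of Bessel capacities; cf. \cite{AH}.)
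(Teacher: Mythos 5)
Your proof is correct, and it takes a genuinely different route from the paper's. The paper follows the classical Adams--Hedberg template: it invokes the \emph{exact} capacitary extremal functions $f_{E_j}$ (whose existence rests on uniform convexity via \cite[Theorem 2.3.10]{AH}), then uses a Clarkson-type convexity argument together with \cite[Corollary 1.3.3]{AH} to extract a \emph{strongly} convergent sequence, and finally appeals to an analogue of \cite[Proposition 2.3.12]{AH} to conclude that the limit $f$ satisfies $P_\alpha f\geq 1$ on $\bigcup_j E_j$ only \emph{quasi-everywhere}, after which an extra capacity-zero correction is needed. You instead work with near-minimizers (avoiding any existence/uniqueness theory for extremals), extract a \emph{weakly} convergent subsequence via reflexivity, use weak lower semicontinuity of the $L^p$-norm, and then observe that $p^\alpha_t(x-\cdot)\in L^{p'}(\mathbb R^n)$ for every $t>0$, so that each point evaluation $g\mapsto P_\alpha g(x,t)$ is weak-continuous on $L^p$. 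This yields $P_\alpha f\geq 1$ \emph{everywhere} on $\bigcup_j E_j$ and finishes the proof in one stroke. Your argument exploits the specific feature that evaluation takes place at interior points $t>0$ of the upper half-space, where the Poisson-type kernel is bounded and decays at rate $|y|^{-(n+\alpha)}$; for classical Bessel or Riesz capacities on $\mathbb R^n$ the kernel is singular at the evaluation point and this shortcut is unavailable, which is exactly why the AH machinery (followed by the paper) resorts to quasi-everywhere reasoning. The net effect is that your proof is shorter, self-contained, and avoids the ``except on a set of capacity zero'' caveat that the paper's approach must carry. One cosmetic remark: the paper's Definition 1.1 sets $C^{\alpha,p}_{\mathbb{R}^{n+1}_+}(E)=0$ when the admissible class is empty, which you follow in your degenerate case; as written this convention actually conflicts with the monotonicity in Proposition \ref{prop-4.2}(ii), and the intended value is almost certainly $+\infty$, but your conclusion that ``both sides agree'' holds under either reading.
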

\begin{proof}
Since $\{E_{j}\}^{\infty}_{j=1}$ is increasing, then
$$C^{\alpha,p}_{\mathbb{R}^{n+1}_+}(\cup^{\infty}_{j=1}E_{j})\geq\lim_{j\rightarrow\infty}C^{\alpha,p}_{\mathbb{R}^{n+1}_+}(E_{j}).$$
Conversely, without loss generality, we assume that $\lim_{j\rightarrow\infty}C^{\alpha,p}_{\mathbb{R}^{n+1}_+}(E_{j})$ is finite. For each $j$, let $f_{E_{j}}$ be the unique function such that $f_{E_{j}}\geq1$ on $E_{j}$ and $\|f_{E_{j}}\|^{p}_{L^{p}}=C^{\alpha,p}_{\mathbb{R}^{n+1}_+}(E_{j})$. Then for $i<j$, it holds that $P_{\alpha}f_{E_{j}}\geq 1$ on $E_{i}$ and further, $P_{\alpha}((f_{E_{i}}+f_{E_{j}})/2)\geq 1$ on $E_{i}$, which means that
$$\int_{\mathbb R^{n}}((f_{E_{i}}+f_{E_{j}})/2)^{p}dx\geq C^{\alpha,p}_{\mathbb{R}^{n+1}_+}(E_{i}).$$
By \cite[Corollary 1.3.3]{AH}, the sequence $\{f_{E_{j}}\}^{\infty}_{j=1}$ converges strongly to a function $f$ satisfying
$$\|f\|_{L^{p}(\mathbb{R}^n)}^{p}=\lim_{j\rightarrow\infty}C^{\alpha,p}_{\mathbb{R}^{n+1}_+}(E_{j}).$$
Similar to \cite[Proposition 2.3.12]{AH}, we can prove that $P_{\alpha}f\geq 1$ on $\cup^{\infty}_{j=1}E_{j}$, except possibly on a countable union of sets with $C^{\alpha,p}_{\mathbb{R}^{n+1}_+}(\cdot)$ zero. Hence
$$\lim_{j\rightarrow\infty}C^{\alpha,p}_{\mathbb{R}^{n+1}_+}(E_{j})\geq\int_{\mathbb R^{n}}|f(x)|^{p}dx\geq C^{\alpha,p}_{\mathbb{R}^{n+1}_+}(\cup^{\infty}_{j=1}E_{j}).$$
\end{proof}
As a corollary of Proposition \ref{prop-2.2}, we can get
\begin{corollary}
Let $O$ be an open subset of $\mathbb R^{n+1}_{+}$. Then
$$C^{\alpha,p}_{\mathbb{R}^{n+1}_+}(O)=\sup\left\{C^{\alpha,p}_{\mathbb{R}^{n+1}_+}(K):\ \text{ compact }K\subset O\right\}.$$
\end{corollary}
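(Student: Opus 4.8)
The plan is to establish the two inequalities separately, the nontrivial one being reduced to Proposition~\ref{prop-2.2}. The inequality
$$C^{\alpha,p}_{\mathbb{R}^{n+1}_+}(O)\ \geq\ \sup\left\{C^{\alpha,p}_{\mathbb{R}^{n+1}_+}(K):\ K\subset O\ \text{compact}\right\}$$
is immediate from the monotonicity of the capacity (Proposition~\ref{prop-4.2}(ii)): every compact $K\subseteq O$ satisfies $C^{\alpha,p}_{\mathbb{R}^{n+1}_+}(K)\leq C^{\alpha,p}_{\mathbb{R}^{n+1}_+}(O)$, and taking the supremum over all such $K$ gives the claim.

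For the reverse inequality I would exhaust $O$ by an increasing sequence of compact sets. Since $O$ is open in $\mathbb R^{n+1}_+$ and $\mathbb R^{n+1}_+$ is open in $\mathbb R^{n+1}$, the set $O$ is open in $\mathbb R^{n+1}$; put
$$K_j:=\left\{z\in\mathbb R^{n+1}_+:\ |z|\leq j\ \text{and}\ \mathrm{dist}(z,\mathbb R^{n+1}\setminus O)\geq 1/j\right\},\qquad j\in\mathbb N_+.$$
One then checks the three routine facts: (a) each $K_j$ is closed and bounded, hence compact, with $K_j\subset O$ (if $\mathrm{dist}(z,\mathbb R^{n+1}\setminus O)>0$ then $z\notin\mathbb R^{n+1}\setminus O$, and $O\subset\mathbb R^{n+1}_+$; note moreover that $\mathbb R^{n+1}\setminus O$ contains the closed half-space $\{t\leq 0\}$, so $\mathrm{dist}(z,\mathbb R^{n+1}\setminus O)\leq t$ for $z=(x,t)$ and $K_j$ stays bounded away from $\{t=0\}$); (b) $K_j\subseteq K_{j+1}$; and (c) $\bigcup_{j\geq1}K_j=O$, because every $z\in O$ has $\mathrm{dist}(z,\mathbb R^{n+1}\setminus O)>0$ and finite norm. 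Applying Proposition~\ref{prop-2.2} to the increasing sequence $\{K_j\}$ of subsets of $\mathbb R^{n+1}_+$ then yields
$$C^{\alpha,p}_{\mathbb{R}^{n+1}_+}(O)=C^{\alpha,p}_{\mathbb{R}^{n+1}_+}\Big(\bigcup_{j=1}^{\infty}K_j\Big)=\lim_{j\to\infty}C^{\alpha,p}_{\mathbb{R}^{n+1}_+}(K_j)\leq\sup\left\{C^{\alpha,p}_{\mathbb{R}^{n+1}_+}(K):\ K\subset O\ \text{compact}\right\},$$
and combining this with the first inequality proves the corollary.

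There is no serious obstacle here: this is the standard inner-regularity argument for capacities, parallel to the classical statement for Riesz and Bessel capacities (cf.\ \cite{AH}). The only points deserving a line of verification are that the exhausting sets $K_j$ are genuinely compact subsets of $O$ — which uses that $O$ is open and that $\mathbb R^{n+1}\setminus O\supseteq\{t\leq 0\}$ — and that $\bigcup_j K_j=O$; once these are in place, the conclusion is a direct invocation of the continuity-from-below of $C^{\alpha,p}_{\mathbb{R}^{n+1}_+}(\cdot)$ along increasing sequences established in Proposition~\ref{prop-2.2}. Implicitly we work in the range $1<p<\infty$ in which that proposition is stated.
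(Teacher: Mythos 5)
Your proof is correct and follows exactly the paper's approach: the easy inequality is monotonicity, and the reverse inequality is obtained by exhausting $O$ by an increasing sequence of compact subsets and invoking Proposition~\ref{prop-2.2} (continuity along increasing sequences). You simply spell out the construction of the exhausting sequence $\{K_j\}$ and verify its compactness, which the paper states without detail.
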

\begin{proof}
It is obvious that
$$C^{\alpha,p}_{\mathbb{R}^{n+1}_+}(O)\geq\sup\left\{C^{\alpha,p}_{\mathbb{R}^{n+1}_+}(K):\ \text{ compact }K\subset O\right\}.$$
Conversely, any open set $O$ in $\mathbb R^{n}$ is the union of an increasing sequence of compact sets denoted by $\{K_{j}\}_{j=1}^{\infty}$. Then it follows from Proposition \ref{prop-2.2} that
\begin{eqnarray*}
C^{\alpha,p}_{\mathbb{R}^{n+1}_+}(O)&=&C^{\alpha,p}_{\mathbb{R}^{n+1}_+}(\cup^{\infty}_{j=1}K_{j})=\lim_{j\rightarrow\infty}C^{\alpha,p}_{\mathbb{R}^{n+1}_+}(K_{j})\\
&\leq&\sup\left\{C^{\alpha,p}_{\mathbb{R}^{n+1}_+}(K):\ \text{ compact }K\subset O\right\}.
\end{eqnarray*}

\end{proof}

For  $t_{0}\geq 0$, $x_{0}\in \mathbb R^{n}$ and $r_{0}>0$, define the ball in $\mathbb R^{n+1}_{+}$ as
$$B_{r_{0}}(x_{0},t_{0})=\left\{(x,t)\in \mathbb R^{n+1}_{+}:\ |x-x_{0}|<r_{0}/2,\ r_{0}<t-t_{0}<2r_{0} \right\}.$$
Let $t=r_{0}s$, $x=r_{0}y$ and $f_{r_{0}}(x)=f(r_{0}x)$. We can get $(x,t)\in B_{r_{0}}(0,0)$ is equivalent to $(y,s)\in B_{1}(0,0)$. A direct computation, together with a change of variable $z=r_{0}u$, gives
\begin{eqnarray*}
P_{\alpha}f(x,t)&=&\int_{\mathbb R^{n}}\frac{t^{\alpha}}{(t^{2}+|x-z|^{2})^{(n+\alpha)/2}}f(z)dz\\
&=&\int_{\mathbb R^{n}}\frac{s^{\alpha}}{(|y-u|^{2}+s^{2})^{(n+\alpha)/2}}f_{r_{0}}(u)du= P_{\alpha}f_{r_{0}}(y,r).
\end{eqnarray*}
Then
$$P_{\alpha}f(x,t)\geq 1 \quad \forall (x,t)\in B_{r_{0}}(0,0)\Longleftrightarrow P_{\alpha}f_{r_{0}}(s,y)\geq 1 \quad \forall (y,s)\in B_{1}(0,0).$$
This means that
$$C^{\alpha,p}_{\mathbb{R}^{n+1}_+}(B_{r_{0}}(0,0))=r^{n}_{0}C^{\alpha,p}_{\mathbb{R}^{n+1}_+}(B_{1}(0,0)).$$

Now we investigate the $L^{p}$-capacity of general balls $B_{r_{0}}(x_{0}, t_{0})$. We first give a space-time estimate for $P_{\alpha} f$.
\begin{lemma}\label{le-4.1}
Let $1\leq r\leq  p\leq\infty.$ Then
$\|P_{\alpha}f(\cdot,t)\|_{L^{p}(\mathbb{R}^n)}\leq t^{n(1/p-1/r)}\|f\|_{L^{r}(\mathbb{R}^n)}.$
\end{lemma}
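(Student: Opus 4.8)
The plan is to estimate $P_\alpha f(\cdot,t) = p_t^\alpha * f(\cdot,t)$ by Young's convolution inequality. Recall that $P_\alpha f(x,t) = (p_t^\alpha * f)(x)$ where $p_t^\alpha(x) = c(n,\alpha) t^\alpha (|x|^2+t^2)^{-(n+\alpha)/2}$, and the convolution is taken in the $x$-variable only, with $t>0$ fixed. Young's inequality says that if $1 + 1/p = 1/r + 1/s$ with $1\le r,s,p\le\infty$, then $\|p_t^\alpha * f\|_{L^p(\mathbb{R}^n)} \le \|p_t^\alpha\|_{L^s(\mathbb{R}^n)}\|f\|_{L^r(\mathbb{R}^n)}$. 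Given $1\le r\le p\le\infty$, I choose $s$ by $1/s = 1 + 1/p - 1/r \in [0,1]$, so that $s\in[1,\infty]$ is admissible.

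The first substantive step is to compute (or at least bound) $\|p_t^\alpha\|_{L^s(\mathbb{R}^n)}$ and extract its scaling in $t$. By the change of variables $x = ty$ one has $p_t^\alpha(x) = t^{-n} p_1^\alpha(x/t)$, hence by scaling $\|p_t^\alpha\|_{L^s(\mathbb{R}^n)} = t^{-n} \cdot t^{n/s}\|p_1^\alpha\|_{L^s(\mathbb{R}^n)} = t^{-n(1-1/s)}\|p_1^\alpha\|_{L^s(\mathbb{R}^n)}$. Since $1 - 1/s = 1/r - 1/p$, this gives the power $t^{-n(1/r-1/p)} = t^{n(1/p-1/r)}$, exactly the exponent in the statement. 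It remains to check that $\|p_1^\alpha\|_{L^s(\mathbb{R}^n)}$ is finite for every admissible $s$: since $p_1^\alpha(x) \approx (1+|x|^2)^{-(n+\alpha)/2}$ decays like $|x|^{-(n+\alpha)}$ at infinity and is bounded near the origin, $p_1^\alpha \in L^s(\mathbb{R}^n)$ for all $s\in[1,\infty]$ because $s(n+\alpha) > n$ always holds when $\alpha>0$. Absorbing $\|p_1^\alpha\|_{L^s(\mathbb{R}^n)}$ — which depends only on $n,\alpha,s$, hence only on $n,\alpha,r,p$ — into the implied constant (or verifying with the explicit Beta-function value of the integral that one may in fact take the constant to be $1$ by the normalization $\int p_1^\alpha = 1$ in the endpoint $s=1$ case, and a routine check otherwise) completes the argument.

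The only mild subtlety, rather than a genuine obstacle, is bookkeeping the endpoint cases: when $r = p$ one needs $s = 1$, and then $\|p_t^\alpha\|_{L^1} = 1$ for all $t$ by the normalization constant $c(n,\alpha)$, recovering the trivial bound $\|P_\alpha f(\cdot,t)\|_{L^p}\le\|f\|_{L^p}$; when $p = \infty$ one uses $1/s = 1 - 1/r = 1/r'$, i.e. $s = r'$, and Hölder's inequality directly. In all cases the exponent of $t$ matches. I would present this as: fix $t>0$, apply Young's inequality with the exponents above, insert the scaling identity for $\|p_t^\alpha\|_{L^s}$, and note finiteness of $\|p_1^\alpha\|_{L^s}$; no step requires more than a line of computation.
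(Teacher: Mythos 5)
Your proposal is correct and follows essentially the same route as the paper: both apply Young's convolution inequality with the conjugate exponent $1/s = 1 + 1/p - 1/r$ (the paper calls it $q$) and then extract the $t$-scaling from the $L^s$ norm of the Poisson kernel $p_t^\alpha$. You are in fact a bit more careful than the paper on the constant — the paper writes the kernel norm bound with $\lesssim$ but states the lemma and the final line with a bare $\leq$, so strictly speaking there is an absorbed constant $\|p_1^\alpha\|_{L^s}$ in both treatments unless one checks it is $\le 1$ for all admissible $s$, a point you flag explicitly.
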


\begin{proof}
Let $q$ obey $1/p+1=1/r+1/q$.
It is easy to see that
\begin{eqnarray*}
\|p^{\alpha}_{t}(\cdot)\|_{L^{q}(\mathbb{R}^n)}=\left(\int_{\mathbb R^{n}}\left|\frac{t^{\alpha}}{(t^{2}+|x|^{2})^{(n+\alpha)/2}}\right|^{q}dx\right)^{1/q} \lesssim t^{n(1/q-1)}.
\end{eqnarray*}
It follows from  Young's inequality that
\begin{eqnarray*}
\|P_{\alpha}f(\cdot,t)\|_{L^{p}(\mathbb{R}^n)}\leq\|p^{\alpha}_{t}\ast f\|_{L^{p}(\mathbb{R}^n)}\leq\|p^{\alpha}_{t}(\cdot)\|_{L^{q}(\mathbb{R}^n)}\|f\|_{L^r(\mathbb{R}^n)} \leq  t^{n(1/p-1/r)}\|f\|_{L^{r}(\mathbb{R}^n)}.
\end{eqnarray*}
\end{proof}

\begin{theorem}
Let $(q,p,r)$  be a triple satisfying  $1/q=n(1/r-1/p)$, where
$$1<r\leq p<
\begin{cases}
nr/(n-1), n>2r;\\
\infty, n\leq 2r.
\end{cases}$$
Given $f\in L^{r}(\mathbb R^{n})$. Then for $I=(0,T)$ with $0<T\leq \infty$,
$P_{\alpha}f(\cdot,\cdot)\in L^{q}(I; L^{p}(\mathbb{R}^n))\cap
C_{b}(I; L^{r}(\mathbb{R}^n))$ with the estimate
$$\|P_{\alpha} f(\cdot,\cdot)\|_{L^{q}(I; L^{p}(\mathbb{R}^n))}\lesssim \|f\|_{L^{r}(\mathbb{R}^n)}.$$
Here $C_{b}(I; L^{r}(\mathbb{R}^n))$ denotes the space  of bounded continuous  functions from $I$ to $L^{r}(\mathbb{R}^n).$
\end{theorem}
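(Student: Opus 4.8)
The plan is to read the stated inequality as a Strichartz-type space-time bound for the Caffarelli--Silvestre solution operator $f\mapsto P_\alpha f$ and to deduce it by interpolating a critical ``dispersive'' estimate against a trivial one. First I would recast Lemma \ref{le-4.1}: since $n\big(\tfrac1p-\tfrac1r\big)=-n\big(\tfrac1r-\tfrac1p\big)=-\tfrac1q$, that lemma reads
$$\|P_\alpha f(\cdot,t)\|_{L^p(\mathbb R^n)}\le t^{-1/q}\,\|f\|_{L^r(\mathbb R^n)},\qquad t>0 .$$
Because $t\mapsto t^{-1/q}$ lies in $L^{q,\infty}(0,T)$ with $\|t^{-1/q}\|_{L^{q,\infty}(0,T)}=1$ for \emph{every} $T\in(0,\infty]$, this already delivers the weak-type endpoint $\|P_\alpha f\|_{L^{q,\infty}(I;L^p(\mathbb R^n))}\lesssim\|f\|_{L^r(\mathbb R^n)}$. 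The point where work is needed is that $t^{-1/q}\notin L^q_{\mathrm{loc}}(0,\infty)$ --- it belongs only to weak $L^q$ --- so one cannot pass from this weak bound to the strong norm $L^q(I;L^p)$ merely by Minkowski's inequality: the weak-type estimate in the time variable must be upgraded.

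To upgrade it I would interpolate against the trivial endpoint coming from $\|p^\alpha_t\|_{L^1(\mathbb R^n)}=1$. Consider the sublinear operator $Sf(t):=\|P_\alpha f(\cdot,t)\|_{L^p(\mathbb R^n)}$, mapping functions on $\mathbb R^n$ to functions on $I$. Young's inequality gives $Sf(t)\le\|f\|_{L^p(\mathbb R^n)}$, so $S\colon L^p(\mathbb R^n)\to L^\infty(I)$ boundedly (this is already the whole assertion when $r=p$, for then $q=\infty$); and Lemma \ref{le-4.1} gives $S\colon L^{r_1}(\mathbb R^n)\to L^{q_1,\infty}(I)$ boundedly for any $r_1\in[1,p)$ with $\tfrac1{q_1}=n\big(\tfrac1{r_1}-\tfrac1p\big)$. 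Marcinkiewicz interpolation applied to the two endpoints $(p,\infty)$ and $(r_1,q_1)$, with $r_1$ chosen in $[1,r)$ so that $r$ is the intermediate exponent $\tfrac1r=\tfrac{1-\theta}p+\tfrac\theta{r_1}$, then produces the strong bound $S\colon L^r(\mathbb R^n)\to L^q(I)$, that is $\|P_\alpha f\|_{L^q(I;L^p(\mathbb R^n))}\lesssim\|f\|_{L^r(\mathbb R^n)}$, where $\tfrac1q=\tfrac\theta{q_1}=n\big(\tfrac1r-\tfrac1p\big)$ as required. The admissibility constraint of the interpolation at the nontrivial endpoint, $r_1\le q_1$ (equivalently $p\le n r_1/(n-1)$), is what selects the permissible triples; a routine optimisation over $r_1$ shows that a legitimate choice exists exactly for the $(q,p,r)$ allowed in the statement. (Equivalently, in real-interpolation terms the output naturally lands in the Lorentz space $L^{q,r}_t\big(L^p(\mathbb R^n)\big)$, and the stated range on $p$ is precisely what makes the embedding $L^{q,r}_t\big(L^p(\mathbb R^n)\big)\hookrightarrow L^q_t\big(L^p(\mathbb R^n)\big)$ available.)

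It remains to justify $P_\alpha f\in C_b(I;L^r(\mathbb R^n))$, which I would do directly from the kernel. Boundedness is immediate: $\|P_\alpha f(\cdot,t)\|_{L^r}\le\|p^\alpha_t\|_{L^1}\|f\|_{L^r}=\|f\|_{L^r}$ for all $t$. For continuity, joint continuity of $(x,t)\mapsto p^\alpha_t(x)$ on $\mathbb R^n\times(0,\infty)$ together with an integrable majorant valid for $t$ in compact subintervals yields $\|p^\alpha_t-p^\alpha_{t'}\|_{L^1(\mathbb R^n)}\to0$ as $t'\to t\in(0,\infty)$, hence $t\mapsto P_\alpha f(\cdot,t)$ is continuous from $I$ into $L^r$; moreover $\{p^\alpha_t\}_{t>0}$ is an approximate identity, so $P_\alpha f(\cdot,t)\to f$ in $L^r$ as $t\to0^+$, so the map even extends continuously to $\overline I$. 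This continuity also supplies the measurability in $t$ needed to make sense of the $L^q(I;L^p)$ statement, while $P_\alpha f(\cdot,t)\in L^p(\mathbb R^n)$ for every $t>0$ is part of Lemma \ref{le-4.1}. The one genuinely delicate step in the whole argument is the weak-to-strong upgrade in the time variable --- the Marcinkiewicz interpolation and the bookkeeping of its admissibility condition against the displayed range of $(q,p,r)$; everything else follows routinely from Lemma \ref{le-4.1}, Young's inequality, and the approximate-identity property of $p^\alpha_t$.
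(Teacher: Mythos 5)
Your space-time estimate argument mirrors the paper's almost exactly: set $Sf(t)=\|P_\alpha f(\cdot,t)\|_{L^p(\mathbb R^n)}$ (the paper's $F_t$), read Lemma \ref{le-4.1} as a weak-type $(r_1,q_1)$ bound, pair it with the trivial strong-type $(p,\infty)$ bound coming from Young's inequality, and interpolate by Marcinkiewicz. You are more explicit than the paper about the off-diagonal admissibility constraint $r_1\le q_1$, which the paper passes over in silence; note, though, that demanding $r_1\le q_1$ for some $r_1<r$ forces $p<nr/(n-1)$, i.e.\ $r\le q$, which does not obviously reproduce the full stated range $p<\infty$ when $n\le 2r$ --- so your remark that a ``routine optimisation over $r_1$'' matches the theorem's range exactly should be checked, and this is a latent issue in the paper's proof as well. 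Where you genuinely depart from the paper is in proving $P_\alpha f\in C_b(I;L^r(\mathbb R^n))$: the paper passes to the Stinga--Torrea subordination formula $P_\alpha f(x,t)=C_\alpha\int_0^\infty e^{-s}e^{-t^2(-\Delta)/(4s)}f(x)\,s^{\alpha/2-1}\,ds$ and combines Minkowski's inequality with the strong continuity of the heat semigroup on $L^r$ plus dominated convergence, whereas you argue directly on the kernel, showing $\|p^\alpha_t-p^\alpha_{t'}\|_{L^1(\mathbb R^n)}\to 0$ as $t'\to t$ by dominated convergence with a majorant uniform on compact time intervals, and finishing with Young's inequality. Your route for the continuity is more elementary, avoids the subordination/semigroup machinery entirely, and as a small bonus yields continuity up to $t=0^+$ via the approximate-identity property of $\{p^\alpha_t\}$, a stronger conclusion than the statement requires.
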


\begin{proof}
{\it Case 1: $p=r\ \&\ q=\infty$}.  By Lemma \ref{le-4.1}, we can get
\begin{eqnarray*}
\|P_{\alpha}f\|_{L^{\infty}(I; L^{r}(\mathbb{R}^n))}&=&\sup_{t>0}\|p^{\alpha}_{t}\ast f\|_{L^{r}(\mathbb{R}^n)}
\leq\sup_{t>0}t^{-n(1/r-1/r)}\|f\|_{L^{r}(\mathbb{R}^n)}
\leq \|f\|_{L^{r}(\mathbb{R}^n)}.
\end{eqnarray*}

{\it Case 2: $p\neq r$}. Denote by $F_{t}(f)=\|p^{\alpha}_{t}\ast f\|_{L^{p}(\mathbb{R}^n)}$. Applying Lemma \ref{le-4.1} again, we also obtain
\begin{eqnarray*}
F_{t}(f)&=&\|p^{\alpha}_{t}\ast f\|_{L^{p}(\mathbb{R}^n)}\leq t^{-n(1/r-1/p)}\|f\|_{L^{r}(\mathbb{R}^n)}\leq t^{-1/q}\|f\|_{L^{r}(\mathbb{R}^n)}.
\end{eqnarray*}
On the other hand,
\begin{eqnarray*}
\left|\left\{t:\ |F_{t}(f)|>\tau\right\}\right|&\leq& \left|\left\{t:\ t^{-1/q}\|f\|_{L^{r}(\mathbb{R}^n)}>\tau\right\}\right|
\leq\left|\left\{t:\ \|f\|^{q}_{L^{r}(\mathbb{R}^n)}/\tau^{q}>t\right\}\right|
\leq \|f\|^{q}_{L^{r}(\mathbb{R}^n)}/\tau^{q}.
\end{eqnarray*}
The above estimate implies that $F_{t}$ is a weak $(r,q)$ type operator. Noticing that
$$|p^{\alpha}_{t}\ast f(x)|\leq\int_{\mathbb R^{n}}\frac{t^{\alpha}}{(t^{2}+|x-y|^{2})^{(n+\alpha)/2}}|f(y)|dy,$$
we have
$$|F_{t}(f)|=\|P_{\alpha}f(\cdot,t)\|_{L^{p}(\mathbb{R}^n)}\leq t^{n(1/p-1/p)}\|f\|_{L^{p}(\mathbb{R}^n)},$$
i.e., $F_{t}$ is also a $(p,\infty)$ type operator. For any triplet $(q,p,r)$, we can choose another  triplet $(q_{1}, p_{1}, r_{1})$ such that $q_{1}<q<\infty$, $r_{1}<r<p$ and
$$\begin{cases}
1/q=\theta/q_{1}+(1-\theta)/\infty,\\
1/r=\theta/r_{1}+(1-\theta)/p.
\end{cases}$$
The Marcinkiewicz interpolation theorem implies that $F_{t}$ is a strong $(r,q)$ type operator and satisfies
\begin{equation}\label{eq-4.3}
\|P_{\alpha}f(\cdot,\cdot)\|_{L^{q}(I; L^{p}(\mathbb{R}^n))}\leq \|f\|_{L^{r}(\mathbb{R}^n)}.
\end{equation}

Below we prove that $P_{\alpha}f(\cdot,\cdot)\in C_{b}(I; L^{r}(\mathbb{R}^n))$. Since $u(\cdot,\cdot):=P_{\alpha}f(\cdot,\cdot)$ satisfies the equations (\ref{1}). Then by a direct computation, we can  verify that $u(\cdot,\cdot)$ also satisfies:
\begin{equation}\label{eq-2.4}
  \begin{cases}
  \Delta_{x}u+\frac{1-\alpha}{t}u_{t}+u_{tt}=0,&\ \ (x,t)\in\ \mathbb{R}^{n}\times(0,\infty);\\
  u(x,0)=f(x),&\ \ x\in\ \mathbb{R}^{n},
  \end{cases}
\end{equation}
see \cite{Caffarelli}. Then by the Poisson type formula obtained by P. Stinga and J. Torrea (cf. \cite[(1.9)]{sti}),
\begin{eqnarray*}
 u(x,t)\equiv P_{\alpha}f(x) = C_{\alpha}\int^{\infty}_{0}e^{-s}e^{-t^{2}(-\Delta)/4s}f(x)s^{\alpha/2-1}ds,\quad x\in\mathbb{R}^{n}\ \&\ t>0.
\end{eqnarray*}
By Minkowski's inequality, we get
\begin{eqnarray*}
\Big|\|u(\cdot,t)\|_{L^{r}(\mathbb{R}^n)}-\|u(\cdot,t_{0})\|_{L^{r}(\mathbb{R}^n)}\Big|&\leq&\|u(\cdot,t)-u(\cdot,t_{0})\|_{L^{r}(\mathbb{R}^n)}\\
&\leq&C_{\alpha}\int^{\infty}_{0}e^{-s}\|e^{-t^{2}(-\Delta)/4s}f(\cdot)-e^{-t_{0}^{2}(-\Delta)/4s}f(\cdot)\|_{L^{r}(\mathbb{R}^n)}s^{\alpha/2-1}ds.
\end{eqnarray*}
Notice that
$$\|e^{-t^{2}(-\Delta)/4s}f(\cdot)-e^{-t_{0}^{2}(-\Delta)/4s}f(\cdot)\|_{L^{r}(\mathbb{R}^n)}\leq C\|f\|_{L^{r}(\mathbb{R}^n)}.$$
Since $\{e^{-t(-\Delta)}\}_{t>0}$ is a strongly continuous semigroups in $L^{r}(\mathbb R^{n})$, then by the dominate convergence theorem, we can deduce that
$$\lim_{t\rightarrow t_{0}}\|u(\cdot,t)\|_{L^{r}(\mathbb{R}^n)}=\|u(\cdot,t_{0})\|_{L^{r}(\mathbb{R}^n)}.$$

\end{proof}

For $p\in (1,\infty)$, choose $(\tilde{p}, \tilde{q})$ such that
$$\begin{cases}
1\leq p\leq \tilde{p}<{np}/{(n-1)};\\
1/\tilde{q}=n(1/p-1/\tilde{p}).
\end{cases}$$

\begin{theorem}\label{th-4.1}
If $ 1\leq p<\infty,$ then for $(x_0,r_0)\in\mathbb R^{n+1}_+$,
$$ r_0^{n}\lesssim C^{\alpha,p}_{\mathbb{R}^{n+1}_+}\left(B_{r_0}(x_0, t_0)\right)\lesssim
(t_0+r_0)^{pn}r_{0}^{n(1-p)}.$$
 Particularly, if $t_0 \lesssim r_0$, then
$$r_0^{n}\lesssim C^{\alpha,p}_{\mathbb{R}^{n+1}_+}\left(B_{r_0}(x_0,t_0)\right)\lesssim  r_0^{n}.$$
\end{theorem}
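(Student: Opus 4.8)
The plan is to prove the two-sided bound separately, handling the lower bound by testing the dual characterization from Proposition~\ref{prop-4.1}(i) against a cleverly chosen measure, and the upper bound by exhibiting an explicit admissible function $f$ and estimating its $L^p$-norm via Lemma~\ref{le-4.1}.

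\textbf{Lower bound $r_0^n\lesssim C^{\alpha,p}_{\mathbb{R}^{n+1}_+}(B_{r_0}(x_0,t_0))$.} By the translation invariance in the $x$-variable (Proposition~\ref{prop-4.2}(iv)) we may assume $x_0=0$. By the dual formula in Proposition~\ref{prop-4.1}(i) it suffices to produce one measure $\mu\in\mathcal M_+(\overline{B_{r_0}(0,t_0)})$ with $\|P^\ast_\alpha\mu\|_{L^{p'}(\mathbb R^n)}\le 1$ and $\mu(\mathbb R^{n+1}_+)\gtrsim r_0^{n/p}$. The natural choice is a constant multiple of Lebesgue measure restricted to the slab $B_{r_0}(0,t_0)$, say $d\mu=c\,\mathbf 1_{B_{r_0}(0,t_0)}\,dx\,dt$; then $P^\ast_\alpha\mu(x)=c\int_{B_{r_0}(0,t_0)}p^\alpha_t(x-z)\,dz\,dt$, and because on this region $t\approx r_0$ (indeed $r_0<t-t_0<2r_0$) one has $\|p^\alpha_t\|_{L^1(\mathbb R^n)}=1$ with the mass concentrated at scale $|x-z|\lesssim t\approx r_0$; integrating over the $t$-slab of width $r_0$ and the ball of radius $r_0/2$ in $z$ gives $P^\ast_\alpha\mu(x)\lesssim c\, r_0$ uniformly, and $P^\ast_\alpha\mu$ is supported (effectively, in an $L^{p'}$ sense) on a set of measure $\lesssim r_0^n$ up to tails that decay fast in $|x|/r_0$. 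Hence $\|P^\ast_\alpha\mu\|_{L^{p'}(\mathbb R^n)}\lesssim c\, r_0\cdot r_0^{n/p'}$, and choosing $c\approx r_0^{-1-n/p'}$ normalizes this to $1$ while $\mu(\mathbb R^{n+1}_+)=c\,|B_{r_0}(0,t_0)|\approx c\, r_0^{n+1}\approx r_0^{n-n/p'}=r_0^{n/p}$. Raising to the $p$-th power gives $C^{\alpha,p}_{\mathbb{R}^{n+1}_+}(B_{r_0}(0,t_0))\gtrsim r_0^n$, as desired.

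\textbf{Upper bound $C^{\alpha,p}_{\mathbb{R}^{n+1}_+}(B_{r_0}(x_0,t_0))\lesssim (t_0+r_0)^{pn}r_0^{n(1-p)}$.} Again take $x_0=0$. We need a nonnegative $f\in L^p(\mathbb R^n)$ with $P_\alpha f\ge 1$ on $B_{r_0}(0,t_0)$ and $\|f\|_{L^p}^p\lesssim (t_0+r_0)^{pn}r_0^{n(1-p)}$. For any $(x,t)\in B_{r_0}(0,t_0)$ we have $|x|<r_0/2$ and $r_0+t_0<t<2r_0+t_0$, so $t\approx r_0+t_0$. Choosing $f=\lambda\,\mathbf 1_{B(0,R)}$ for a suitable radius $R\approx r_0+t_0$ and amplitude $\lambda$, one estimates from below
\[
P_\alpha f(x,t)=\lambda\int_{B(0,R)}\frac{c(n,\alpha)t^\alpha}{(|x-y|^2+t^2)^{(n+\alpha)/2}}\,dy\gtrsim \lambda\,\frac{t^\alpha}{(R+t)^{n+\alpha}}\,R^n\approx \lambda\,\frac{(r_0+t_0)^\alpha}{(r_0+t_0)^{n+\alpha}}(r_0+t_0)^n=\lambda,
\]
since $|x-y|\lesssim R+t\approx r_0+t_0$ on the domain of integration. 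Thus $P_\alpha f\ge 1$ on $B_{r_0}(0,t_0)$ once $\lambda\approx 1$ up to an absolute constant, giving $\|f\|_{L^p}^p=\lambda^p R^n\approx (r_0+t_0)^n$. This is better than claimed but valid only for large $R$; to obtain the stated sharper form with the factor $r_0^{n(1-p)}$ one instead invokes Lemma~\ref{le-4.1}: by the mixed-norm estimate, $P_\alpha$ maps $L^p(\mathbb R^n)$ into $L^\infty$ at height $t$ with norm $\lesssim t^{-n/p}\approx (r_0+t_0)^{-n/p}$, so taking $f$ of $L^p$-norm $\approx (r_0+t_0)^{n/p}$ suffices to make $P_\alpha f\gtrsim 1$ at all points of height $\approx r_0+t_0$; localizing $f$ to a ball matched to the $x$-extent $r_0$ of $B_{r_0}(0,t_0)$ and optimizing the radius produces the claimed bound $(t_0+r_0)^{pn}r_0^{n(1-p)}$. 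Finally, when $t_0\lesssim r_0$ both sides of the bound reduce to $r_0^n$, yielding the stated two-sided estimate $r_0^n\lesssim C^{\alpha,p}_{\mathbb{R}^{n+1}_+}(B_{r_0}(x_0,t_0))\lesssim r_0^n$; combining with the scaling identity $C^{\alpha,p}_{\mathbb{R}^{n+1}_+}(B_{r_0}(0,0))=r_0^nC^{\alpha,p}_{\mathbb{R}^{n+1}_+}(B_1(0,0))$ established above gives a clean cross-check.

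\textbf{Main obstacle.} The delicate point is the lower bound: controlling $\|P^\ast_\alpha\mu\|_{L^{p'}(\mathbb R^n)}$ for $\mu$ supported on the thin slab $B_{r_0}(x_0,t_0)$ requires a careful split of the $x$-integral into the near region $|x-x_0|\lesssim r_0$, where $P^\ast_\alpha\mu(x)\approx c\,r_0$, and the far region, where the Poisson kernel's polynomial tail $p^\alpha_t(x-z)\approx t^\alpha|x-z|^{-(n+\alpha)}$ must be integrated against the slab and shown to contribute a convergent $L^{p'}$ tail (this uses $\alpha>0$ and $p'<\infty$, i.e. $p>1$). Getting the correct power of $r_0$ out of this tail estimate — and matching it to the normalization constant $c$ — is where the real work lies; the upper bound, by contrast, is a routine application of Lemma~\ref{le-4.1} together with the choice of an explicit test function.
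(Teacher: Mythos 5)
Your proposal diverges from the paper on both sides of the estimate, but neither divergence is carried through correctly.

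\textbf{Lower bound.} The paper derives $r_0^n \lesssim C^{\alpha,p}_{\mathbb{R}^{n+1}_+}(B_{r_0})$ directly from the mixed-norm inequality \eqref{eq-4.3}: if $P_\alpha f \geq 1$ on $B_{r_0}(x_0,t_0)$, one integrates over the slab $\{|x-x_0|<r_0/2,\ r_0<t-t_0<2r_0\}$ in the $L^{\tilde q}_t L^{\tilde p}_x$ norms and reads off $r_0^{n/\tilde p + 1/\tilde q} \lesssim \|f\|_{L^p}$, which equals $r_0^{n/p}$. You instead go through the dual formulation (Proposition~\ref{prop-4.1}(i)) and test against Lebesgue measure on the slab. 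This is a legitimate alternative route, but your central reduction is wrong as written: you assert ``on this region $t\approx r_0$,'' but on the slab $t$ satisfies $t_0+r_0<t<t_0+2r_0$, so $t\approx t_0+r_0$, which can be arbitrarily larger than $r_0$. Consequently the claim that $P^\ast_\alpha\mu$ is effectively concentrated on a set of measure $\lesssim r_0^n$ fails when $t_0\gg r_0$: the kernel at height $t\approx t_0+r_0$ spreads over scale $t_0+r_0$, not $r_0$. Your final bound $\|P^\ast_\alpha\mu\|_{L^{p'}}\lesssim c\,r_0^{1+n/p'}$ happens to remain an upper bound (because when the effective support grows like $(t_0+r_0)^n$, the peak value simultaneously drops by the factor $(r_0/(t_0+r_0))^n$), but this compensation is never argued, and you yourself flag the tail estimate as incomplete. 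Also note that Proposition~\ref{prop-4.1} is stated only for $p\in(1,\infty)$, so this route cannot reach $p=1$.

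\textbf{Upper bound.} The paper picks the ``small'' test function $f=\mathbf 1_{\{|x-x_0|<r_0/2\}}$, localized at scale $r_0$, and a pointwise lower bound on the kernel over the slab gives $P_\alpha f\gtrsim r_0^n/(t_0+r_0)^n$, hence $C^{\alpha,p}\lesssim (t_0+r_0)^{pn}r_0^{n(1-p)}$. Your first computation, with $f=\lambda\mathbf 1_{B(x_0,R)}$ and $R\approx r_0+t_0$, is actually correct and gives the \emph{stronger} bound $C^{\alpha,p}\lesssim (t_0+r_0)^n$, since $(t_0+r_0)^n\le (t_0+r_0)^{pn}r_0^{n(1-p)}$ always (the ratio is $((t_0+r_0)/r_0)^{n(p-1)}\ge 1$). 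Your subsequent pivot --- that one should ``instead'' invoke Lemma~\ref{le-4.1} to obtain the ``stated sharper form with the factor $r_0^{n(1-p)}$'' --- rests on a misreading: the stated form is \emph{weaker}, not sharper, and the alternative derivation you gesture at (``localizing $f$ ... and optimizing the radius'') is never carried out. Your original estimate already implies the theorem as stated, so the detour is both unnecessary and unfinished.

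\textbf{Summary.} The overall strategy (dualize for the lower bound, test function for the upper bound) is sound and genuinely different from the paper's Strichartz-based lower bound, but the lower-bound argument contains an error ($t\approx r_0$ is false for general $t_0$) that invalidates the effective-support reasoning and leaves the crucial $L^{p'}$ estimate unproved, and the upper-bound discussion mislabels your valid bound as insufficient and substitutes an incomplete alternative. To salvage the lower bound you should either carry out the tail computation honestly (split at $|x-x_0|\approx t_0+r_0$, not $r_0$, and use the actual peak $c\,r_0^{n+1}(t_0+r_0)^{-n}$), or simply follow the paper's route via \eqref{eq-4.3}.
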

\begin{proof}
Let $B_{r_{0}}(x_{0},t_{0})$ be a ball in $\mathbb R^{n+1}_{+}$. For any $f\geq 0$ satisfying $P_{\alpha}(f)\geq 1_{B_{r_{0}}(x_{0}, t_{0})}$,
by (\ref{eq-4.3}), we can get
\begin{eqnarray*}
r_{0}^{n/\tilde{p}+1/\tilde{q}}
&\lesssim&\left(\int_{r_{0}<t-t_{0}<2r_{0}}\left(\int_{|x-x_{0}|<r_{0}/2}|P_{\alpha}f(x,t)|^{\tilde{p}}dx\right)^{\tilde{q}/\tilde{p}}dt\right)^{1/\tilde{q}}
\lesssim\|f\|_{L^{p}(\mathbb{R}^n)},
\end{eqnarray*}
which gives $r_{0}^{(n/\tilde{p}+1/\tilde{q})p}\leq \|f\|_{L^p(\mathbb{R}^n)}^{p}$ and $r_{0}^{n}\leq C^{\alpha,p}_{\mathbb{R}^{n+1}_+}(B_{r_{0}}(x_{0}, t_{0}))$.

For the converse, choose $f=1_{\{x\in\mathbb R^{n}:\ |x-x_{0}|<r_{0}/2\}}$. If $(x,t)\in B_{r_{0}}(x_{0}, t_{0})$, then $|x-x_{0}|<r_{0}/2$ and $r_{0}<t-t_{0}<2r_{0}$, i.e., $r_{0}+t_{0}\leq t<t_{0}+2r_{0}$. We can get there exists a constant $c_{\alpha}$ such that
$$\frac{t^{\alpha}}{(t^{2}+|x-y|^{2})^{(n+\alpha)/2}}\geq \frac{(t_{0}+r_{0})^{\alpha}}{(t_{0}+2r_{0}+3r_{0}/2)^{(n+\alpha)/2}}\geq \frac{c_{\alpha}}{(t_{0}+r_{0})^{n}},$$
which gives
\begin{eqnarray*}
P_{\alpha}f(x,t)&=&\int_{|y-x_{0}|<r_{0}/2}\frac{t^{\alpha}}{(t^{2}+|x-y|^{2})^{(n+\alpha)/2}}dy\geq\frac{c_{\alpha}r^{n}_{0}}{(t_{0}+r_{0})^{n}},
\end{eqnarray*}
equivalently,
$P_{\alpha}\left(\left(1+{t_{0}}/{r_{0}}\right)^{n}c^{-1}_{\alpha}f\right)\geq 1.$
By the definition of $C^{\alpha,p}_{\mathbb{R}^{n+1}_+}(\cdot)$, we obtain
\begin{eqnarray*}
C^{\alpha,p}_{\mathbb{R}^{n+1}_+}\left(B_{r_{0}}(x_{0}, t_{0})\right)&\leq&\Big\|\frac{(t_{0}+r_{0})^{n}}{r^{n}_{0}}f\Big\|^{p}_{L^{p}(\mathbb{R}^n)}\\
&=&\frac{(t_{0}+r_{0})^{pn}}{r^{pn}_{0}}\int_{\mathbb R^{n}}\left(1_{\{x\in\mathbb R^{n}:\ |x-x_{0}|<r_{0}/2\}}(y)\right)^{p}dy\\
&=&(t_{0}+r_{0})^{pn}r_{0}^{(1-p)n},
\end{eqnarray*}
which completes the proof of Theorem \ref{th-4.1}.
\end{proof}
\subsection{Capacitary strong type inequality}\label{sec-2.2}
In order to establish  the embeddings  of $L^{p}(\mathbb R^{n})$ to  $L^{q}(\mathbb R^{n+1}_{+})$  via $P_{\alpha}$,  we need to  prove the capacitary strong and weak inequalities. Let $L^{p}_{+}(\mathbb R^{n})$ denote the class of all nonnegative functions in $L^{p}(\mathbb R^{n})$.
\begin{lemma}\label{le-4.2}
Let $p\in (1,\infty)$. Then
\begin{equation}\label{eq-2.3}
\int^{\infty}_{0}C^{\alpha,p}_{\mathbb{R}^{n+1}_+}\left(\left\{(x,t)\in\mathbb R^{n+1}_{+}:\ P_{\alpha}f(x,t)\geq\lambda\right\}\right)d\lambda^{p}\lesssim \|f\|_{L^{p}(\mathbb{R}^n)}^{p}\quad  \forall\ f\in L^{p}_{+}(\mathbb R^{n}).
\end{equation}
\end{lemma}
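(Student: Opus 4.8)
The plan is to follow the classical route to capacitary strong-type inequalities, going through the weak-type inequality first and then summing over a dyadic decomposition of the level sets. First I would record the \emph{capacitary weak-type inequality}: for every $\lambda>0$ and $f\in L^p_+(\mathbb R^n)$,
$$
\lambda^p\, C^{\alpha,p}_{\mathbb{R}^{n+1}_+}\big(\{(x,t):\ P_\alpha f(x,t)\ge \lambda\}\big)\le \|f\|_{L^p(\mathbb R^n)}^p,
$$
which is immediate from the definition of $C^{\alpha,p}_{\mathbb{R}^{n+1}_+}$: the function $g=f/\lambda\ge 0$ satisfies $P_\alpha g\ge 1$ on the level set by linearity of $P_\alpha$, so the capacity is at most $\|g\|_p^p=\lambda^{-p}\|f\|_p^p$. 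This handles the ``$\sup_\lambda$'' version but not the integrated version (\ref{eq-2.3}).

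For the strong-type statement I would use the standard dyadic/truncation argument (as in Adams--Hedberg \cite{AH}, and as used by Chang--Xiao \cite{Chang Xiao} and Shi--Xiao \cite{Shi Xiao} in the fractional heat setting). Write $E_j=\{(x,t)\in\mathbb R^{n+1}_+:\ P_\alpha f(x,t)\ge 2^j\}$ for $j\in\mathbb Z$, so that $\int_0^\infty C^{\alpha,p}_{\mathbb{R}^{n+1}_+}(\{P_\alpha f\ge\lambda\})\,d\lambda^p \approx \sum_{j\in\mathbb Z} 2^{jp}\,C^{\alpha,p}_{\mathbb{R}^{n+1}_+}(E_j)$. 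Now split $f$ at the dyadic scale: for each $j$ set $f_j=(f-2^{j-2})_+\wedge 2^{j-2}$ (or some comparable truncation), so that $\sum_j f_j^p \lesssim f^p$ pointwise with a constant depending only on $p$. The key geometric point is that on $E_j$ one has $P_\alpha f\ge 2^j$, and since $P_\alpha(f-\sum_{k\le j-1} f_k)$ captures the ``large part'' of $f$ at scale $2^j$, a Maz'ya-type argument shows $P_\alpha(C f_{j})\gtrsim 1$ on $E_j\setminus E_{j+1}$ after a bounded number of steps — more precisely one shows $C^{\alpha,p}_{\mathbb{R}^{n+1}_+}(E_j)\lesssim 2^{-jp}\sum_{k\le j} \|f_k\|_p^p$ with geometric decay, or directly estimates $2^{jp}C^{\alpha,p}_{\mathbb{R}^{n+1}_+}(E_j\setminus E_{j+1})\lesssim \|f_j\|_p^p$ using the weak-type bound applied to $f_j$ together with the strong subadditivity / the elementary inequality $\sum_j 2^{jp}C(E_j)\lesssim \sum_j 2^{jp}C(E_j\setminus E_{j+1})$ valid because $C^{\alpha,p}_{\mathbb{R}^{n+1}_+}$ is a (countably subadditive, monotone) Choquet capacity by Proposition \ref{prop-4.2}. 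Summing in $j$ and using $\sum_j \|f_j\|_p^p\lesssim \|f\|_p^p$ yields (\ref{eq-2.3}).

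The one structural ingredient that needs care is replacing the telescoping estimate ``$\sum_j 2^{jp}C(E_j)\lesssim \sum_j 2^{jp}C(E_j\setminus E_{j+1})$'' — for a general monotone subadditive set function this is false, and the honest proof instead uses that $E_j\subseteq \{P_\alpha f_{j-2}\ge c\,2^{j}\}$ up to lower-order terms, i.e. that a single truncated piece already forces $P_\alpha$ to be large on the \emph{annulus} $E_j\setminus E_{j+1}$. Concretely: on $E_j\setminus E_{j+1}$ we have $2^j\le P_\alpha f< 2^{j+1}$; decomposing $f=\sum_k f_k$ and using that $P_\alpha(\sum_{k\le j-N}f_k)\le \sum_{k\le j-N}2^{k-2}\cdot(\text{nothing})$ — wait, $P_\alpha$ of a bounded function is not bounded by that sum pointwise, so instead one argues at the level of capacity: $C^{\alpha,p}_{\mathbb{R}^{n+1}_+}(\{P_\alpha(\sum_{k\le j-N}f_k)\ge 2^{j-1}\})\le 2^{-(j-1)p}\|\sum_{k\le j-N}f_k\|_p^p$, and by Hölder-type control of $\ell^1$ vs $\ell^p$ sums this is summably small compared to $\sum 2^{jp}C(E_j)$ once $N$ is fixed large. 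This bookkeeping — choosing $N$, absorbing the ``small'' part, and invoking countable subadditivity (Proposition \ref{prop-4.2}(iii)) plus outer regularity (Proposition \ref{prop-2.1}) to pass from compact sets to the actual level sets — is the only genuinely delicate step; everything else is the dilation invariance and the weak-type bound already in hand. I expect the main obstacle to be making the Maz'ya absorption argument clean in the present non-translation-invariant-in-$t$ setting, but since $P_\alpha$ is still a positivity-preserving convolution operator in the $x$-variable for each fixed $t$, the classical argument of \cite[Chapter 7]{AH} transfers essentially verbatim.
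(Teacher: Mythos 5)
Your opening observation (the weak-type inequality follows immediately by rescaling $f\mapsto f/\lambda$) is correct, and the dyadic reduction $\int_0^\infty C^{\alpha,p}_{\mathbb{R}^{n+1}_+}(\{P_\alpha f\ge\lambda\})\,d\lambda^p\approx\sum_j 2^{jp}C^{\alpha,p}_{\mathbb{R}^{n+1}_+}(E_j)$ is also where the paper starts. But the central device you propose --- the Maz'ya value-truncation $f_j=(f-2^{j-2})_+\wedge 2^{j-2}$ of the \emph{boundary datum} $f$ --- does not work for the operator $P_\alpha$, and this is a genuine gap, not just a bookkeeping issue. Truncating $f$ by value gives no pointwise control of $P_\alpha f$ on the level sets $E_j$: already for $f=1_{B(0,1)}$ all but one or two of the pieces $f_j$ vanish identically, yet $E_j$ is nonempty (with positive capacity) for every $j\le 0$, so the hoped-for bound $P_\alpha f_j\gtrsim 1$ on $E_j\setminus E_{j+1}$ fails completely. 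The Maz'ya truncation succeeds for Sobolev/gradient capacities because $\nabla$ is local and the $\nabla u_j$ have essentially disjoint supports inherited from the truncation; $P_\alpha$ is a nonlocal positive averaging kernel and nothing of the sort happens. Your own mid-paragraph hesitation (``wait, $P_\alpha$ of a bounded function is not bounded by that sum pointwise'') is exactly where the argument breaks, and the proposed rescue via $\|\sum_{k\le j-N}f_k\|_p^p$ being ``summably small'' is not available: that quantity is comparable to $\|f\wedge 2^{j-N}\|_p^p$, which need not be small.

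The paper instead follows the Adams--Hedberg dual route, which is designed precisely to replace the disjoint-support trick for smoothing operators. One truncates the level sets to a compact ball $\overline{B_r(0,0)}$ to make each $E_j$ compact, invokes Proposition~\ref{prop-4.1}(ii) to produce equilibrium measures $\mu_j\in\mathcal M_+(E_j)$ with $\mu_j(E_j)=\int(P^*_\alpha\mu_j)^{p'}dx=\int P_\alpha(P^*_\alpha\mu_j)^{p'-1}d\mu_j=C^{\alpha,p}_{\mathbb{R}^{n+1}_+}(E_j)$, and sets $S=\sum_j 2^{jp}\mu_j(E_j)$ and $T=\bigl\|\sum_j 2^{j(p-1)}P^*_\alpha\mu_j\bigr\|_{L^{p'}}^{p'}$. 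The inequality $S\le T^{1/p'}\|f\|_{L^p}$ is immediate from H\"older, and the real work is the absorption estimate $T\lesssim S$. The substitute for disjoint supports is the \emph{boundedness principle} $P_\alpha(P^*_\alpha\mu_k)^{p'-1}\le 1$ on $\operatorname{supp}\mu_k$, combined with the nesting $E_j\subset E_k$ for $j\ge k$ to control cross terms $\int (P^*_\alpha\mu_j)(P^*_\alpha\mu_k)^{p'-1}dx\le C^{\alpha,p}_{\mathbb{R}^{n+1}_+}(E_j)$; the argument is then split into $2\le p<\infty$ and $1<p<2$ because the convexity/concavity of $t\mapsto t^{p'}$ changes. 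Finally one passes from balls to $\mathbb R^{n+1}_+$ via Proposition~\ref{prop-2.2} and Fatou, and from $C^\infty_0$ to $L^p_+$ by a density argument using the weak-type bound. If you want to salvage a proof along your lines you would have to import this dual/equilibrium-measure machinery; the truncation of $f$ alone cannot close the argument.
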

\begin{proof}
Firstly,  we  verify the inequality for any nonnegative function in $C^{\infty}_{0}(\mathbb R^{n})$. Let $f\in C^{\infty}_{0}(\mathbb R^{n})$ and for $r>0$, set
$$E_{j}:=\left\{(x,t)\in \overline{B_{r}(0,0)}:\ P_{\alpha}f(x,t)\geq 2^{j}\right\}.$$
Note that $E_j$  depends on $r.$  Since $f\in C^{\infty}_{0}(\mathbb R^{n}),$  $P_{\alpha}f(x,t)$ is continuous. Thus, $E_j$ is the intersection of $\overline{B_r(0,0)}$ and a close set and so $E_j$ is compact.  

Let $\mu_{j}$ stand for the measure corresponding to $E_{j}$ such that
$$\mu_{j}(E_{j})=\int_{\mathbb R^{n}}(P_{\alpha}^{\ast}\mu_{j}(x))^{p'}dx=\int_{\mathbb R^{n+1}_{+}}P_{\alpha}(P^{\ast}_{\alpha}\mu_{j})^{p'-1}d\mu_{j}=C^{\alpha,p}_{\mathbb{R}^{n+1}_+}(E_{j}).$$

Let $S:=\sum\limits^{\infty}_{j=-\infty}2^{jp}\mu_{j}(E_{j})$ and $T:=\big\|\sum\limits^{\infty}_{j=-\infty}2^{j(p-1)}(P^{\ast}_{\alpha}\mu_{j})\big\|^{p'}_{L^{p'}}$. 
Because $f\in C^{\infty}_{0}(\mathbb R^{n})$, there exists a  positive integer $j_{0}$ such that $E_{j}$ are empty sets for $j>j_{0}$, i.e., $\mu_{j}(E_{j})=C^{\alpha,p}_{\mathbb R^{n+1}_{+}}(E_{j})=0, j>j_{0}$. Hence $\sum\limits^{\infty}_{j=1}2^{jp}\mu_{j}(E_{j})<\infty$. On the other hand, 
\begin{eqnarray*}
\sum^{-1}_{j=\infty}2^{jp}\mu_{j}(E_{j})&=&\sum^{-1}_{j=\infty}2^{jp}C^{\alpha,p}_{\mathbb R^{n+1}_{+}}(E_{j})\\
&\leq&\sum^{-1}_{j=\infty}2^{jp}C^{\alpha,p}_{\mathbb R^{n+1}_{+}}(B_{r}(0,0))\\
&\lesssim&r^{n}\sum^{-1}_{j=\infty}2^{jp},
\end{eqnarray*}
which means that $S<\infty$. Then by H\"older's inequality,
\begin{equation}\label{eq-2.5}
S\leq\sum^{\infty}_{j=-\infty}2^{j(p-1)}\int_{\mathbb R^{n+1}_{+}}P_{\alpha}fd\mu_{j}
\leq\int_{\mathbb R^{n}}f\left(\sum^{\infty}_{j=-\infty}2^{j(p-1)}(P^{\ast}_{\alpha}\mu_{j})\right)dx
\leq T^{1/p'}\|f\|_{L^{p}(\mathbb{R}^n)}.
\end{equation}

Below we claim that 
\begin{equation}\label{eq-2.6}
T\lesssim S.
\end{equation}
We divide  the proof of this claim into two cases.

{\it Case 1: $2\leq p<\infty$.} For $k=0, \pm1, \pm2,\ldots,$ let
$$\begin{cases}
\sigma_{k}(x)=\sum\limits^{\infty}_{j=k}2^{j(p-1)}P^{\ast}_{\alpha}\mu_{j}(x);\\
\sigma(x)=\sum\limits^{\infty}_{j=-\infty}2^{j(p-1)}P^{\ast}_{\alpha}\mu_{j}(x).
\end{cases}$$
Since $f\in C^{\infty}_{0}(\mathbb R^{n})$, the sets $E_{j}$ are empty for sufficiently large $j$, i.e., there exists a positive integer $j_{0}$ such that
$C^{\alpha,p}_{\mathbb{R}^{n+1}_+}(E_{j})=0,\ j> j_{0}$. Then by (ii) of Proposition \ref{prop-4.1},
\begin{eqnarray*}
\|\sigma_{k}\|_{L^{p'}(\mathbb{R}^n)}&=&\Big\|\sum\limits^{\infty}_{j=k}2^{j(p-1)}P^{\ast}_{\alpha}\mu_{j}\Big\|_{L^{p'}(\mathbb{R}^n)}\\
&\leq&\sum\limits^{\infty}_{j=k}2^{j(p-1)}\left\|P^{\ast}_{\alpha}\mu_{j}\right\|_{L^{p'}(\mathbb{R}^n)}\\
&\simeq&\sum\limits^{j_{0}}_{j=k}2^{j(p-1)}(C^{\alpha,p}_{\mathbb{R}^{n+1}_+}(E_{j}))^{1/p'}<\infty,
\end{eqnarray*}
which implies that for any $k$, $\sigma_{k}\in L^{p'}(\mathbb R^{n})$. For $x\in\mathbb R^{n}$,  the sequence $\{\sigma_{k}(x)\}$ is increasing as $k\rightarrow-\infty$ since $P^{\ast}_{\alpha}\mu_{j}(x)\geq0$. If there exists an upper bound for $\{\sigma_{k}(x)\}$, we know that $\sigma_{k}(x)$ convergences to the series $\sum\limits^{\infty}_{j=-\infty}2^{j(p-1)}P^{\ast}_{\alpha}\mu_{j}(x)$. If the sequence  $\{\sigma_{k}(x)\}$ is unbounded, then $\sigma_{k}(x)$ tends to $\infty$ as $k\rightarrow-\infty$. Without loss of generality, formally, we write $\lim\limits_{k\rightarrow -\infty}\sigma_{k}(x)=\sigma(x)$. We can get
\begin{eqnarray*}
\sum^{\infty}_{k=n}\Big(\sigma_{k}(x)^{p'}-\sigma_{k+1}(x)^{p'}\Big)&=&\sum^{\infty}_{k=n}\sigma_{k}(x)^{p'}-\sum^{\infty}_{k=n+1}\sigma_{k}(x)^{p'}
=\sigma_{n}(x)^{p'}.
\end{eqnarray*}
By the mean value theorem, noticing that $\sigma_{k}(x)\geq\sigma_{k+1}(x)$, we have
\begin{eqnarray}\label{eq-2.7}
\sigma(x)^{p'}&=&\left(\sum^{\infty}_{j=-\infty}2^{j(p-1)}P^{\ast}_{\alpha}\mu_{j}(x)\right)^{p'}=p'\sum^{\infty}_{k=-\infty}(\sigma_{k}(x))^{p'-1}\\
&=&\lim_{n\rightarrow-\infty}\sigma_{n}(x)^{p'}\nonumber\\
&=&\lim_{n\rightarrow-\infty}\sum^{\infty}_{k=n}\Big(\sigma_{k}(x)^{p'}-\sigma_{k+1}(x)^{p'}\Big)\nonumber\\
&\leq&p'(\sigma_{k}(x))^{p'-1}\Big(\sigma_{k}(x)-\sigma_{k+1}(x)\Big)\nonumber\\
&=&p'\sum^{\infty}_{k=-\infty}(\sigma_{k}(x))^{p'-1}
2^{k(p-1)}P^{\ast}_{\alpha}\mu_{k}(x).\nonumber
\end{eqnarray}
Using Cauchy-Schwartz's inequality, we obtain
\begin{eqnarray*}
T&=&p'\int_{\mathbb R^{n}}\sum^{\infty}_{k=-\infty}\sigma_{k}^{p'-1}(x)
2^{k(p-1)}P^{\ast}_{\alpha}\mu_{k}(x)dx\\
&\lesssim&p'\int_{\mathbb R^{n}}\left(\sum^{\infty}_{k=-\infty}(\sigma_{k}(x))2^{k\frac{1}{(p-1)}\frac{1}{p'-1}}(P^{\ast}_{\alpha}\mu_{k})^{p'-1}\right)^{p'-1}\\
&&\times\left(\sum^{\infty}_{k=-\infty}2^{k(p-1)\frac{p(p-2)}{(p-1)^{2}}\frac{1}{2-p'}}(P^{\ast}_{\alpha}\mu_{k}(x))^{p'}\right)^{2-p'}dx\\
&=&p'\int_{\mathbb R^{n}}\left(\sum^{\infty}_{k=-\infty}\sigma_{k}(x)2^{k}(P^{\ast}_{\alpha}\mu_{k})^{p'-1}\right)^{p'-1}
\left(\sum^{\infty}_{k=-\infty}2^{kp}(P^{\ast}_{\alpha}\mu_{k}(x))^{p'}\right)^{2-p'}dx,
\end{eqnarray*}
which, together with H\"older's inequality, indicates that $T\lesssim p' T_{1}^{2-p'}T^{p'-1}_{2}$, where
\begin{equation}
\left\{ \begin{aligned}
&T_{1}:=\int_{\mathbb R^{n}}\left(\sum^{\infty}_{k=-\infty}2^{kp}(P^{\ast}_{\alpha}\mu_{k}(x))^{p'}\right)dx;\nonumber\\
&T_{2}:=\int_{\mathbb R^{n}}\left(\sum^{\infty}_{k=-\infty}\sigma_{k}(x)2^{k}(P^{\ast}_{\alpha}\mu_{k})^{p'-1}\right)dx.\nonumber
\end{aligned} \right.
\end{equation}

For $T_{1}$, we have
\begin{eqnarray*}
T_{1}&=&\sum^{\infty}_{k=-\infty}2^{kp}\int_{\mathbb R^{n}}\left(P^{\ast}_{\alpha}\mu_{k}(x)\right)^{p'}dx
=\sum^{\infty}_{k=-\infty}2^{kp}C^{\alpha,p}_{\mathbb{R}^{n+1}_+}(E_{k})\\
&\lesssim&\sum^{\infty}_{k=-\infty}\int^{2^{k}}_{2^{k-1}}C^{\alpha,p}_{\mathbb{R}^{n+1}_+}\left(\left\{(x,t)\in \overline{B_{r}(0,0)}
:\ P_{\alpha}f(x,t)\geq 2^{k}\right\}\right)d\lambda^{p}\\
&\lesssim&\sum^{\infty}_{k=-\infty}\int^{2^{k}}_{2^{k-1}}C^{\alpha,p}_{\mathbb{R}^{n+1}_+}\left(\left\{(x,t)\in \overline{B_{r}(0,0)}:\ P_{\alpha}f(x,t)\geq \lambda\right\}\right)d\lambda^{p}\\
&\lesssim&\int^{\infty}_{0}C^{\alpha,p}_{\mathbb{R}^{n+1}_+}\left(\left\{(x,t)\in \overline{B_{r}(0,0)}:\ P_{\alpha}f(x,t)\geq \lambda\right\}\right)d\lambda^{p}
\lesssim S.
\end{eqnarray*}

For $T_{2}$, we can get
\begin{eqnarray*}
T_{2}&=&\sum^{\infty}_{k=-\infty}2^{k}\int_{\mathbb R^{n}}\left(\sum^{\infty}_{j\geq k}2^{j(p-1)}P^{\ast}_{\alpha}\mu_{j}(x)\right)(P^{\ast}_{\alpha}\mu_{k})^{p'-1}dx\\
&=&\sum^{\infty}_{k=-\infty}\sum^{\infty}_{j\geq k}2^{k+j(p-1)}\int_{\mathbb R^{n}}P^{\ast}_{\alpha}\mu_{j}(x)(P^{\ast}_{\alpha}\mu_{k})^{p'-1}dx\\
&=&\sum^{\infty}_{k=-\infty}\sum^{\infty}_{j\geq k}2^{k+j(p-1)}\int_{\mathbb R^{n+1}_{+}}P_{\alpha}(P^{\ast}_{\alpha}\mu_{k})^{p'-1}d\mu_{j}(x).
\end{eqnarray*}
Note that $E_{k}$ is compact subsets. Let $f_{E_{k}}$ be the function satisfying (\ref{eq-2.1}). Suppose that $P_{\alpha}f_{E_{k}}(x_{0},t_{0})>1$, by the lower-semicontinuity, $P_{\alpha}f_{E_{j}}\geq 1+\delta>1$ on some neighborhood $U$ of $(x_{0},t_{0})$. On the other hand, denote by $\Omega_{E_{k}}$ the set $\{(x,t)\in E_{k}:\ P_{\alpha}f_{E_{k}}(x,t)< 1\}$ and let $F$ be any compact subset of $\Omega_{E_{k}}$. For any $0\leq f\in L^{p}(\mathbb R^{n})$ such that $P_{\alpha}f\geq 1$ on $F$
\begin{eqnarray*}
\mu_{k}(F)&\leq&\int_{\mathbb R^{n+1}_{+}}P_{\alpha}fd\mu_{k}\\
&=&\int_{\mathbb R^{n}}f P^{\ast}_{\alpha}\mu_{k}dx\\
&\leq&\|P^{\ast}_{\alpha}\mu_{k}\|_{L^{p'}(\mathbb{R}^n)}\|f\|_{L^{p}(\mathbb{R}^n)},
\end{eqnarray*}
which, together with (\ref{eq-2.1}), gives
$$\mu_{k}(F)\leq\|P^{\ast}_{\alpha}\mu_{k}\|_{L^{p'}(\mathbb{R}^n)}C^{\alpha,p}_{\mathbb R^{n+1}_{+}}(F)\leq \|P^{\ast}_{\alpha}\mu_{k}\|_{L^{p'}(\mathbb{R}^n)}C^{\alpha,p}_{\mathbb R^{n+1}_{+}}(\Omega_{E_{k}})=0.$$
Hence, $\mu_{k}(\Omega_{E_{k}})=0$, i.e., $P_{\alpha}f_{E_{k}}\geq 1$ on $E_{k}$ for $\mu_{k}$ a.e.  We can deduce that
\begin{eqnarray*}
C^{\alpha,p}_{\mathbb R^{n+1}_{+}}({E_{k}})&=&\int_{\mathbb R^{n+1}_{+}}P_{\alpha}(P^{\ast}_{\alpha}\mu_{k})^{p'-1}d\mu_{k}\\
&=&\int_{\mathbb R^{n+1}_{+}}P_{\alpha}f_{E_{k}}d\mu_{k}\\
&\geq&(1+\delta)\mu_{k}(U)+\mu_{k}(E_{k}\setminus U)\\
&\geq&\delta\mu_{k}(U)+\mu_{k}(E_{k}),
\end{eqnarray*}
which gives $\mu_{k}(U)=0$, i.e., $(x_{0},t_{0})\notin \text{ supp }\mu_{k}$. Equivalently, for all $(x,t)\in \text{ supp }\mu_{k}$, $$P_{\alpha}(P^{\ast}_{\alpha}\mu_{k})^{p'-1}=P_{\alpha}f_{E_{k}}\leq 1.$$
Since  $E_{j}\subset E_{k}$ for $j\geq k$,  we obtain
\begin{eqnarray*}
T_{2}&\lesssim&\sum^{\infty}_{k=-\infty}\sum^{\infty}_{j\geq k}2^{k+j(p-1)}C^{\alpha,p}_{\mathbb{R}^{n+1}_+}(E_{j})\\
&=&\sum^{\infty}_{j=-\infty}2^{j(p-1)}C^{\alpha,p}_{\mathbb{R}^{n+1}_+}(E_{j})\sum^{j}_{k=-\infty}2^{k}\\
&\lesssim&\sum^{\infty}_{j=-\infty}2^{jp}C^{\alpha,p}_{\mathbb{R}^{n+1}_+}(E_{j})\lesssim S.
\end{eqnarray*}
The estimates for $T_{1}$ and $T_{2}$ imply (\ref{eq-2.6}). It can be deduced from (\ref{eq-2.5})\ \&\ (\ref{eq-2.6}) that
$$\Big\|\sum\limits^{\infty}_{j=-\infty}2^{j(p-1)}(P^{\ast}_{\alpha}\mu_{j})\Big\|^{p'}_{L^{p'}(\mathbb{R}^n)}\lesssim \|f\|_{L^{p}(\mathbb{R}^n)}^{p},$$
i.e., $\sigma\in L^{p'}(\mathbb R^{n})$ and the series $\sum\limits^{\infty}_{j=-\infty}2^{j(p-1)}P^{\ast}_{\alpha}\mu_{j}(x)<\infty \text{ a.e. }x\in\mathbb R^{n}$.

{\it Case 2: $1<p<2$.} For $k=0,\pm1,\pm2,\ldots,$ let
$$\begin{cases}
\sigma_{k}(x)=\sum\limits^{k}_{j=-\infty}2^{j(p-1)}P^{\ast}_{\alpha}\mu_{j}(x);\\
\sigma(x)=\sum\limits^{\infty}_{j=-\infty}2^{j(p-1)}P^{\ast}_{\alpha}\mu_{j}(x).
\end{cases}$$
Similarly, because $\sigma_{k}(x)\geq 0$ for $x\in \mathbb R^{n}$, we can also write $\lim\limits_{k\rightarrow\infty}\sigma(x)=\sigma(x)$ formally.
Below, similar to {\it Case 1}, we will prove (\ref{eq-2.6}) and $\sigma\in L^{p'}(\mathbb R^{n})$. Following the procedure of (\ref{eq-2.7}), we can deduce that $$(\sigma(x))^{p'}=\left(\sum^{\infty}_{j=-\infty}2^{j(p-1)}P^{\ast}_{\alpha}\mu_{j}(x)\right)^{p'}\leq p'\sum^{\infty}_{k=-\infty}(\sigma_{k}(x))^{p'-1}
2^{k(p-1)}P^{\ast}_{\alpha}\mu_{k}(x).$$
We get
\begin{eqnarray*}
T
&=&p'\sum^{\infty}_{k=-\infty}2^{k(p-1)}\int_{\mathbb R^{n}}P^{\ast}_{\alpha}\mu_{k}(x)\left(\sum\limits^{k}_{j=-\infty}2^{j(p-1)}P^{\ast}_{\alpha}\mu_{j}(x)\right)^{p'-1}dx\\
&=&p'\sum^{\infty}_{k=-\infty}2^{k(p-1)}\int_{\mathbb R^{n}}\left(\sum\limits^{k}_{j=-\infty}2^{j(p-1)}\left(P^{\ast}_{\alpha}\mu_{k}(x)\right)^{1/(p'-1)}P^{\ast}_{\alpha}\mu_{j}(x)\right)^{p'-1}dx\\
&=&p'\sum^{\infty}_{k=-\infty}2^{k(p-1)}\left\|\sum\limits^{k}_{j=-\infty}2^{j(p-1)}\left(P^{\ast}_{\alpha}\mu_{k}(x)\right)^{1/(p'-1)}
P^{\ast}_{\alpha}\mu_{j}(x)\right\|_{L^{p'-1}(\mathbb{R}^n)}^{p'-1}\\
&\lesssim&p'\sum^{\infty}_{k=-\infty}2^{k(p-1)}\left\{\sum\limits^{k}_{j=-\infty}2^{j(p-1)}\left\|\left(P^{\ast}_{\alpha}\mu_{k}(x)\right)^{1/(p'-1)}
P^{\ast}_{\alpha}\mu_{j}(x)\right\|_{L^{p'-1}(\mathbb{R}^n)}\right\}^{p'-1}.
\end{eqnarray*}
Also, for $j\leq k$, if $x\in E_{k}$ then $x\in E_{j}$. Similarly, we can deduce that
$$P_{\alpha}\left(P^{\ast}_{\alpha}\mu_{j}(x)\right)^{p'-1}(x)\leq1,\ x\in E_{k}.$$
This indicates that
\begin{eqnarray*}
\int_{\mathbb R^{n}}\left(P^{\ast}_{\alpha}\mu_{k}(x)\right)\left(P^{\ast}_{\alpha}\mu_{j}(x)\right)^{p'-1}dx
&=& \int_{\mathbb R^{n+1}_{+}}P_{\alpha}\left(P^{\ast}_{\alpha}\mu_{j}(x)\right)^{p'-1}d\mu_{k}\\
&\leq&\mu_{k}(E_{k})=C^{\alpha,p}_{\mathbb{R}^{n+1}_+}(E_{k}).
\end{eqnarray*}
We obtain
\begin{eqnarray*}
T&\lesssim& \sum^{\infty}_{k=-\infty}2^{k(p-1)}\left\{\sum\limits^{k}_{j=-\infty}2^{j(p-1)}\left(\int_{\mathbb R^{n}}P^{\ast}_{\alpha}\mu_{k}(x)
\left(P^{\ast}_{\alpha}\mu_{j}(x)\right)^{p'-1}dx\right)^{1/(p'-1)}\right\}^{p'-1}\\
&\lesssim& \sum^{\infty}_{k=-\infty}2^{k(p-1)}C^{\alpha,p}_{\mathbb{R}^{n+1}_+}(E_{k})\left(\sum^{k}_{j=-\infty}2^{j(p-1)}\right)^{p'-1}\\
&\lesssim&\sum^{\infty}_{k=-\infty}2^{kp}C^{\alpha,p}_{\mathbb{R}^{n+1}_+}(E_{k})\lesssim S,
\end{eqnarray*}
which gives (\ref{eq-2.6}) for $1<p<2$.

It follows from (\ref{eq-2.5})\ \&\ (\ref{eq-2.6}) that
$$S\lesssim \|f\|_{L^{p}(\mathbb{R}^n)}T^{1/p'}\lesssim \|f\|_{L^{p}(\mathbb{R}^n)}S^{1/p'}.$$
We can get $S\lesssim \|f\|_{L^{p}(\mathbb{R}^n)}^{p}$ and consequently, $T<\infty$. Then  $\sigma\in L^{p'}(\mathbb R^{n}),$ for $1<p<2$.
By the fact that $f\in C^{\infty}_{0}(\mathbb R^{n})$, there exists an integer $j_{0}$ such that
\begin{eqnarray*}
\Big\|\sum\limits^{\infty}_{j=0}2^{j(p-1)}P^{\ast}_{\alpha}\mu_{j}\Big\|_{L^{p'}(\mathbb{R}^n)}
&\leq&\sum\limits^{\infty}_{j=0}2^{j(p-1)}\left\|P^{\ast}_{\alpha}\mu_{j}\right\|_{L^{p'}(\mathbb{R}^n)}\\
&\simeq&\sum\limits^{j_{0}}_{j=0}2^{j(p-1)}(C^{\alpha,p}_{\mathbb{R}^{n+1}_+}(E_{j}))^{1/p'}<\infty.
\end{eqnarray*}
This indicates that $\sum\limits^{-1}_{j=-\infty}2^{j(p-1)}P^{\ast}_{\alpha}\mu_{j}\in L^{p'}(\mathbb R^{n})$. For $\sigma_{k}$, if $k\leq -1$, because $P^{\ast}_{\alpha}\mu_{j}(x)\geq 0$, then
$$\|\sigma_{k}\|_{L^{p'}(\mathbb{R}^n)}\leq \Big\|\sum\limits^{-1}_{j=-\infty}2^{j(p-1)}P^{\ast}_{\alpha}\mu_{j}\Big\|_{L^{p'}(\mathbb{R}^n)}$$
and $\sigma_{k}\in L^{p'}(\mathbb R^{n})$. If $k\geq 0$, noticing that
$$\sigma_{k}=\sum\limits^{-1}_{j=-\infty}2^{j(p-1)}P^{\ast}_{\alpha}\mu_{j}+\sum\limits^{j_{0}}_{j=0}2^{j(p-1)}P^{\ast}_{\alpha}\mu_{j},$$
we can also get $\sigma_{k}\in L^{p}(\mathbb R^{n})$.

Further, we obtain that
\begin{eqnarray*}
&&\int^{\infty}_{0}C^{\alpha,p}_{\mathbb{R}^{n+1}_+}\left(\left\{(x,t)\in \overline{B_{r}(0,0)}:\ P_{\alpha}f(x,t)\geq \lambda\right\}\right)d\lambda^{p}\\
&&\quad =\sum^{\infty}_{j=-\infty}\int^{2^{j+1}}_{2^{j}}C^{\alpha,p}_{\mathbb{R}^{n+1}_+}\left(\left\{(x,t)\in \overline{B_{r}(0,0)}:\ P_{\alpha}f(x,t)\geq \lambda\right\}\right)d\lambda^{p}\\
&&\quad\lesssim \sum^{\infty}_{j=-\infty}C^{\alpha,p}_{\mathbb{R}^{n+1}_+}(E_{j})\int^{2^{j+1}}_{2^{j}}d\lambda^{p}\\
&&\quad\lesssim \sum^{\infty}_{j=-\infty}2^{pj}\mu_{j}(E_{j})\lesssim \|f\|^{p}_{L^{p}(\mathbb{R}^n)}.
\end{eqnarray*}
Hence we can deduce from Proposition \ref{prop-2.2} and Fauto's lemma that
\begin{eqnarray*}
&&\int^{\infty}_{0}C^{\alpha,p}_{\mathbb{R}^{n+1}_+}\left(\left\{(x,t)\in \mathbb R^{n+1}_{+}:\ P_{\alpha}f(x,t)>\lambda\right\}\right)d\lambda^{p}\\
&&\quad=\int^{\infty}_{0}C^{\alpha,p}_{\mathbb{R}^{n+1}_+}\left(\cup^{\infty}_{r=1}\left\{(x,t)\in \overline{B_{r}(0,0)}:\ P_{\alpha}f(x,t)>\lambda\right\}\right)d\lambda^{p}\\
&&\quad=\int^{\infty}_{0}\lim_{r\rightarrow\infty}C^{\alpha,p}_{\mathbb{R}^{n+1}_+}\left(\left\{(x,t)\in \overline{  B_{r}(0,0)}:\ P_{\alpha}f(x,t)>\lambda\right\}\right)d\lambda^{p}\\
&&\quad\leq \underline{\lim}_{r\rightarrow\infty}\int^{\infty}_{0}C^{\alpha,p}_{\mathbb{R}^{n+1}_+}\left(\left\{(x,t)\in  \overline{ B_{r}(0,0)}:\ P_{\alpha}f(x,t)>\lambda\right\}\right)d\lambda^{p}\\
&&\quad\leq \underline{\lim}_{r\rightarrow\infty}\int^{\infty}_{0}C^{\alpha,p}_{\mathbb{R}^{n+1}_+}\left(\left\{(x,t)\in   \overline{B_{r}(0,0)}
:\ P_{\alpha}f(x,t)\geq \lambda\right\}\right)d\lambda^{p}\\
&&\quad\lesssim\|f\|^{p}_{L^{p}(\mathbb{R}^n)},
\end{eqnarray*}
which proves (\ref{eq-2.3}) for the functions in $C^{\infty}_{0}(\mathbb R^{n})$.

At last, we prove that (\ref{eq-2.3}) holds for the functions in $L^{p}_{+}(\mathbb R^{n})$. Let $f\in L^{p}_{+}(\mathbb R^{n})$. Since $C^{\infty}_{0}(\mathbb R^{n})$ is dense in $L^{p}_{+}(\mathbb R^{n})$, there exists a sequence of $\{f_{j}\}_{j=1}^{\infty}$ such that $f_{n}\rightarrow f$ in $L^{p}(\mathbb R^{n})$ as $n\rightarrow \infty$. Then it is obvious that $\|f_{j}\|_{L^{p}(\mathbb{R}^n)}\rightarrow\|f\|_{L^{p}(\mathbb{R}^n)}$ as $n\rightarrow \infty$. We can select a subsequence denoted by $\{f_{i_{j}}\}$ such that $\|f_{i_{j}}-f\|_{L^{p}(\mathbb{R}^n)}<4^{-j}$. For any $\lambda>0$, there exists an $m_{1}\in\mathbb N$ such that for $j\geq m_{1}$,
\begin{eqnarray*}
C^{\alpha,p}_{\mathbb R^{n+1}_{+}}\Big(\Big\{(x,t):\ P_{\alpha}(|f_{i_{j}}-f|)(x,t)>\lambda/2\Big\}\Big)
&&\leq C^{\alpha,p}_{\mathbb R^{n+1}_{+}}\Big(\Big\{(x,t):\ P_{\alpha}(|f_{i_{j}}-f|)(x,t)>2^{-j}\Big\}\Big)\\
&&\lesssim 2^{jp}\|f_{i_{j}}-f\|^{p}_{L^{p}(\mathbb{R}^n)}\\
&&\lesssim 2^{-jp}.
\end{eqnarray*}
On the other hand, there exists an $m_{2}\in\mathbb N$ such that
$$2^{-jp}\leq\frac{1}{2} C^{\alpha,p}_{\mathbb R^{n+1}_{+}}\Big(\Big\{(x,t):\ P_{\alpha}(|f_{i_{j}}-f|)(x,t)>\lambda\Big\}\Big).$$
Take $m:=\max\{m_{1}, m_{2}\}$. Then
\begin{eqnarray*}
C^{\alpha,p}_{\mathbb R^{n+1}_{+}}\Big(\Big\{(x,t):\ P_{\alpha}(f)(x,t)>\lambda\Big\}\Big)&\leq&C^{\alpha,p}_{\mathbb R^{n+1}_{+}}\Big(\Big\{(x,t):\ P_{\alpha}(f_{i_j})(x,t)>\lambda/2\Big\}\Big)\\
&&+C^{\alpha,p}_{\mathbb R^{n+1}_{+}}\Big(\Big\{(x,t):\ P_{\alpha}(|f_{i_{j}}-f|)(x,t)>\lambda/2\Big\}\Big)\\
&\leq&C^{\alpha,p}_{\mathbb R^{n+1}_{+}}\Big(\Big\{(x,t):\ P_{\alpha}(f_{i_j})(x,t)>\lambda/2\Big\}\Big)\\
&&+\frac{1}{2} C^{\alpha,p}_{\mathbb R^{n+1}_{+}}\Big(\Big\{(x,t):\ P_{\alpha}(|f_{i_{j}}-f|)(x,t)>\lambda\Big\}\Big),
\end{eqnarray*}
which means that $\forall j\geq m$,
$$C^{\alpha,p}_{\mathbb R^{n+1}_{+}}\Big(\Big\{(x,t):\ |P_{\alpha}(f)|(x,t)>\lambda\Big\}\Big)\leq 2C^{\alpha,p}_{\mathbb R^{n+1}_{+}}\Big(\Big\{(x,t):\ |P_{\alpha}(f_{i_j})|(x,t)>\lambda/2\Big\}\Big).$$
Then
$$C^{\alpha,p}_{\mathbb R^{n+1}_{+}}\Big(\Big\{(x,t):\ |P_{\alpha}(f)|(x,t)>\lambda\Big\}\Big)\leq 2\liminf_{j\rightarrow\infty}C^{\alpha,p}_{\mathbb R^{n+1}_{+}}\Big(\Big\{(x,t):\ |P_{\alpha}(f_{i_j})|(x,t)>\lambda/2\Big\}\Big).$$
By Fauto's lemma, we can get
\begin{eqnarray*}
&&\int^{\infty}_{0}C^{\alpha,p}_{\mathbb{R}^{n+1}_+}\left(\left\{(x,t)\in\mathbb R^{n+1}_{+}:\ P_{\alpha}f(x,t)\geq\lambda\right\}\right)d\lambda^{p}\\
&&\lesssim \int^{\infty}_{0}\liminf_{j\rightarrow\infty}C^{\alpha,p}_{\mathbb R^{n+1}_{+}}\Big(\Big\{(x,t):\ |P_{\alpha}(f_{j})|(x,t)>\lambda/2\Big\}\Big)d\lambda^{p}\\
&&\lesssim \liminf_{j\rightarrow\infty}\int^{\infty}_{0}C^{\alpha,p}_{\mathbb R^{n+1}_{+}}\Big(\Big\{(x,t):\ P_{\alpha}(f_{i_j})(x,t)>\lambda/2\Big\}\Big)d\lambda^{p}\\
&&\lesssim  \liminf_{j\rightarrow\infty}\|f_{i_{j}}\|^{p}_{L^{p}(\mathbb{R}^n)}\\
&&\lesssim \|f\|_{L^{p}(\mathbb{R}^n)}^{p}.
\end{eqnarray*}

\end{proof}

\subsection{Preliminary  lemmas on the fractional capacity}\label{sec-2.3}
Let $\mathcal M_{+}(\mathbb R^{n+1}_{+})$ represent the class of all nonnegative Radon measures on $\mathbb R^{n+1}_{+}$.
\begin{lemma}\label{lemma 2}
Let $\alpha \in(0,2) $ and $\beta\in (0,n).$ Given $f\in \dot{W}^{\beta,p}(\mathbb{R}^n), s>0,$ and $\mu\in\mathcal M_{+}(\mathbb R^{n+1}_{+}),$ let
$$L^{\alpha,\beta}_{s}(f)=\left\{(x,t)\in \mathbb{R}^{n+1}_+:\ |P_{\alpha} f(x,t)|>s\right\}$$
and
$$R^{\alpha,\beta}_{s}(f)=\Big\{y\in \mathbb{R}^{n}:\ \sup_{|y-x|<t}|P_{\alpha} f(x,t)|>s\Big\}.$$
Then the following four statements are true.
\item{\rm(i)}  For any natural number $k$
    $$\mu\left(L^{\alpha,\beta}_{s}(f)\cap T(B(0,k))\right)\leq \mu\left(T\left(R^{\alpha,\beta}_{s}(f)\cap B(0,k)\right)\right).$$
\item{\rm (ii)}  For any natural number $k,$
    $$Cap_{\mathbb{R}^n}^{\beta,p}\left(R^{\alpha,\beta}_{s}(f)\cap B(0,k)\right)\geq c^\beta_p(\mu, \mu\left(T\left(R^{\alpha,\beta}_{s}(f)\cap B(0,k)\right)\right).$$
\item{\rm (iii)} There exists a  constant $\theta_\alpha>0$ such that
    $$\sup_{|y-x|<t}|P_{\alpha} f (y,t)|\leq \theta_\alpha \mathcal{M} f(x),\quad  x\in \mathbb{R}^n,$$
   where $\mathcal{M}$ denotes  the Hardy-Littlewood maximal operator:
$$\mathcal{M}f(x)=\sup_{r>0}r^{-n}\int_{B(x,r)}|f(y)|dy,\  x\in \mathbb{R}^n.$$

\item{\rm (iv)} There exists a  constant $\eta_{n,\alpha}>0$ such that
$$(x,t)\in T(O)\Rightarrow (p^{\alpha}_t\ast
|f|)(x,t) \geq
\eta_{n,\alpha},$$
where $O$ is a bounded open set contained in $\hbox{Int}(\{x\in \mathbb{R}^n: f(x)\geq 1\}).$

\end{lemma}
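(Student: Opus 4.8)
\emph{Plan.} The four assertions are essentially independent, and I would prove them in the order (iv), (iii), (i), (ii): the first two are pointwise estimates for the Poisson kernel, while (i) and (ii) are soft consequences of the fact that the set $R^{\alpha,\beta}_{s}(f)$ is open.

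\emph{Part (iv).} If $(x,t)\in T(O)$ then, by definition of the tent, $B(x,t)\subseteq O\subseteq\{z:\ f(z)\ge 1\}$, so $|f(z)|\ge 1$ for a.e. $z\in B(x,t)$ and hence
\[
(p^{\alpha}_t\ast|f|)(x,t)\ \ge\ \int_{B(x,t)}p^{\alpha}_t(x-z)\,dz\ =\ \int_{B(0,t)}p^{\alpha}_t(w)\,dw .
\]
The scaling identity $p^{\alpha}_t(tw)=t^{-n}p^{\alpha}_1(w)$ shows the last integral equals $\int_{B(0,1)}p^{\alpha}_1(w)\,dw=:\eta_{n,\alpha}>0$, depending only on $n,\alpha$ (equivalently, $p^{\alpha}_t(w)\ge c(n,\alpha)2^{-(n+\alpha)/2}t^{-n}$ on $B(0,t)$, so the integral is $\ge |B(0,1)|\,c(n,\alpha)2^{-(n+\alpha)/2}$). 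This is exactly (iv).

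\emph{Part (iii).} I would produce a radially decreasing integrable majorant for the kernel after the non-tangential shift. Fix $x\in\mathbb R^{n}$ and $(y,t)$ with $|y-x|<t$. Since $|x-z|\le|x-y|+|y-z|<t+|y-z|$ and trivially $|y-z|+t\ge t$, we get $|y-z|+t\ge\max\{|x-z|,t\}\ge\tfrac12(|x-z|+t)$, hence $(|y-z|^{2}+t^{2})^{(n+\alpha)/2}\gtrsim(|x-z|+t)^{n+\alpha}$ and therefore
\[
p^{\alpha}_t(y-z)\ \lesssim\ \frac{t^{\alpha}}{(|x-z|+t)^{n+\alpha}}\ =\ t^{-n}\,\Phi\!\left(\frac{x-z}{t}\right),\qquad \Phi(w):=(1+|w|)^{-(n+\alpha)} .
\]
Since $n+\alpha>n$, $\Phi$ is a radially decreasing function in $L^{1}(\mathbb R^{n})$, so $(p^{\alpha}_t\ast|f|)(y,t)\lesssim(\Phi_t\ast|f|)(x)\lesssim\|\Phi\|_{L^{1}}\mathcal{M}f(x)$ by the classical pointwise domination of a convolution with a radially decreasing $L^{1}$ kernel by the Hardy--Littlewood maximal function. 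Taking the supremum over admissible $(y,t)$ yields the claim with $\theta_\alpha\approx c(n,\alpha)\|\Phi\|_{L^{1}}$.

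\emph{Parts (i) and (ii).} The common observation is that
\[
R^{\alpha,\beta}_{s}(f)=\bigcup\bigl\{B(x,t):\ (x,t)\in\mathbb R^{n+1}_{+},\ |P_{\alpha}f(x,t)|>s\bigr\}
\]
is a union of open balls, hence open; so $S:=R^{\alpha,\beta}_{s}(f)\cap B(0,k)$ is a bounded open set. For (i), $L^{\alpha,\beta}_{s}(f)\cap T(B(0,k))\subseteq T(S)$: if $(x,r)$ lies in the left side then $B(x,r)\subseteq B(0,k)$ and $|P_{\alpha}f(x,r)|>s$, so every $y\in B(x,r)$ has $|y-x|<r$ and hence lies in $R^{\alpha,\beta}_{s}(f)$; thus $B(x,r)\subseteq S$, i.e. $(x,r)\in T(S)$, and monotonicity of $\mu$ gives (i). For (ii), being bounded and open, $S$ is admissible in the infimum defining $c^{\beta}_{p}(\mu,\tau)$ whenever $\mu(T(S))>\tau$, so $Cap^{\beta,p}_{\mathbb R^{n}}(S)\ge c^{\beta}_{p}(\mu,\tau)$ for every $\tau<\mu(T(S))$; letting $\tau\uparrow\mu(T(S))$ and using the monotonicity of $\tau\mapsto c^{\beta}_{p}(\mu,\tau)$ yields $Cap^{\beta,p}_{\mathbb R^{n}}(S)\ge c^{\beta}_{p}(\mu,\mu(T(S)))$, which is (ii).

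\emph{Main obstacle.} The only step with genuine analytic content is (iii): the geometric comparison $|y-z|+t\gtrsim|x-z|+t$ on the cone $|y-x|<t$, which reduces the non-tangential bound to the textbook maximal-function estimate. Parts (i) and (iv) follow immediately once the tent $T(\cdot)$ and the Poisson kernel $p^{\alpha}_t$ are unwound, and in (ii) the only care required is to note that $R^{\alpha,\beta}_{s}(f)$ is open and that the endpoint value at $\mu(T(S))$ is controlled by the monotonicity of the capacity-minimizing function.
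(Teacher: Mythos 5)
Your argument is correct on all four items and is essentially the paper's proof, with two small differences worth recording. For (i) the paper deduces openness of $R^{\alpha,\beta}_s(f)$ from lower semicontinuity of the non-tangential maximal function, while you observe directly that $R^{\alpha,\beta}_s(f)$ is a union of balls; these are the same fact, and your unwinding of the inclusion $L^{\alpha,\beta}_s(f)\cap T(B(0,k))\subseteq T(R^{\alpha,\beta}_s(f)\cap B(0,k))$ is a clean way to state what the paper expresses via $T(A)\cap T(B)=T(A\cap B)$. For (iii) the paper cites Stein's Proposition on p.~57 directly; you reprove it by exhibiting the radially decreasing $L^1$ majorant $\Phi(w)=(1+|w|)^{-(n+\alpha)}$ after the cone shift $|y-x|<t$, which is exactly the mechanism behind that citation, so no gap. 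For (iv) your argument is actually tighter than the paper's: the paper introduces an auxiliary parameter $\sigma$ and splits into cases $\sigma>1$, $\sigma\le1$, whereas you integrate the kernel over $B(x,t)$ directly and use scaling, which gives the same constant $\eta_{n,\alpha}=\int_{B(0,1)}p^\alpha_1$ with less fuss.

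The only place where I would push back is (ii), which the paper dismisses with ``it follows from the definition.'' You are right that $S=R^{\alpha,\beta}_s(f)\cap B(0,k)$ is a bounded open set and hence competes in the infimum defining $c^\beta_p(\mu,\tau)$ whenever $\mu(T(S))>\tau$, giving $Cap_{\mathbb{R}^n}^{\beta,p}(S)\ge c^\beta_p(\mu,\tau)$ for every $\tau<\mu(T(S))$. However, the step ``letting $\tau\uparrow\mu(T(S))$ and using monotonicity'' does not by itself give $Cap_{\mathbb{R}^n}^{\beta,p}(S)\ge c^\beta_p(\mu,\mu(T(S)))$: a nondecreasing function can jump, so you only obtain the left limit $\sup_{\tau<\mu(T(S))}c^\beta_p(\mu,\tau)$, which is $\le c^\beta_p(\mu,\mu(T(S)))$, not $\ge$. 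Under the paper's strict-inequality convention $\mu(T(O))>t$, (ii) as literally written could even fail (e.g.\ when no open set has $\mu(T(O))>\mu(T(S))$). The intended reading is almost certainly that the constraint should be $\mu(T(O))\ge t$, in which case $S$ is itself admissible at $t=\mu(T(S))$ and (ii) is immediate; but as stated your limiting argument does not close the gap, and you should either note the convention or prove only the version with $\tau<\mu(T(S))$, which is all that is actually used downstream.
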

\begin{proof}
    (i) Since $\sup\limits_{|y-x|<t}|P_{\alpha} f(x,t)|$ is lower semicontinuous on $\mathbb{R}^n$, we can see that $R^{\alpha,\beta}_s(f)$ is an open subset of $\mathbb{R}^n$ and
    $$\left\{\begin{aligned}
    &L^{\alpha,\beta}_s(f)\subseteq T(R^{\alpha,\beta}_s(f));\\
     &\mu(L^{\alpha,\beta}_s(f))\leq \mu(T(R^{\alpha,\beta}_s(f))).
    \end{aligned}\right.$$
    Then
    \begin{eqnarray*}
    \mu(L^{\alpha,\beta}_s(f)\cap T(B(0,k)))&\leq& \mu(T(R^{\alpha,\beta}_s(f)\cap T(B(0,k))))=\mu(T(R^{\alpha,\beta}_s(f)\cap B(0,k))).
    \end{eqnarray*}

    (ii) It follows from the definition of $c^\beta_p(\mu;t).$

    (iii) Since
    $p^{\alpha}_{t}(x)=\frac{1}{t^{n}}\psi(\frac{x}{t})$, where $\psi(x):=(1+|x|^{2})^{-(n+\alpha)/2}$ is radial bounded and integrable on $\mathbb{R}^n,$ it follows from \cite[p.57, Proposition]{Stein} that   $$\sup_{|y-x|<t}|P_{\alpha}f(y,t)|\leq \theta_\alpha\mathcal{M}f(x).$$

    (iv) For any $(x,t)\in T(O),$ we have
    $$B(x,t)\subseteq O\subseteq\hbox{Int }(\{x: f(x)>1\}).$$
    There exist $\sigma$ and $C$ which   depend only on $n$ and $\alpha$ such that  $
    \inf\{p^\alpha_t(x): |x|<\sigma t\}\geq Ct^{-n}.$
    Then $$p^{\alpha}_t\ast |f|(x,t)\geq Ct^{-n}\int_{B(x,\sigma t)\cap \text{ Int }(\{x: f(x)\geq 1\})} |f|(y)dy.
    $$
If $\sigma>1,$ then
$$B(x,\sigma t)\cap \text{ Int }(\{x: f(x)\geq 1\})\supseteq B(x, t)\cap \text{ Int }(\{x: f(x)\geq 1\})=B(x,t).$$
If $\sigma\leq 1,$ then
$$B(x,\sigma t)\cap \text{ Int }(\{x: f(x)\geq 1\})=B(x,\sigma t).$$
Thus $p^\alpha_t\ast|f|(x,t)\geq \eta_{n,\alpha}$ for a constant $\eta_{n,\alpha}>0.$

    \end{proof}

The following result provides  the capacitary strong estimates for $Cap_{\mathbb{R}^n}^{\beta,p}(\cdot)$. For the proof, we refer the reader to
\cite[Lemma 2.1]{Zhai} and the references therein.
\begin{lemma}\label{lemma 4}
Let $\beta\in (0,n)$ and $p\in [1, {n}/{\beta}]$.
    \item{\rm (i)} For $f\in C_0^\infty(\mathbb{R}^n)$,
    $$\int_0^\infty Cap_{\mathbb{R}^n}^{\beta,p}\left(\left\{x\in \mathbb{R}^n: |f(x)|\geq s  \right\}\right)ds^p\lesssim \|f\|^p_{\dot{W}^{\beta,p}(\mathbb{R}^n)}.$$
    \item{\rm (ii)} For $f\in C_0^\infty(\mathbb{R}^n)$,
$$\int_0^\infty Cap_{\mathbb{R}^n}^{\beta,p}\left(\left\{x\in \mathbb{R}^n: |\mathcal{M}f(x)|\geq s  \right\}\right)ds^p\lesssim \|f\|^p_{\dot{W}^{\beta,p}(\mathbb{R}^n)}.$$

\end{lemma}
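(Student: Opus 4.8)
The plan is to reduce both inequalities to the classical capacitary strong type inequality for Riesz potentials and, in the range $1<p<n/\beta$, to run an argument strictly parallel to the proof of Lemma~\ref{le-4.2}. First I would discretize: with $E_s:=\{x\in\mathbb{R}^n:\ |f(x)|\ge s\}$ (and analogously with $\mathcal{M}f$ in place of $f$ for part (ii)), monotonicity in $s$ gives
\[
\int_0^\infty Cap_{\mathbb{R}^n}^{\beta,p}(E_s)\,ds^p\;\approx\;\sum_{k\in\mathbb{Z}}2^{kp}\,Cap_{\mathbb{R}^n}^{\beta,p}(E_{2^k}),
\]
so it suffices to bound the dyadic sum on the right by $\|f\|^p_{\dot{W}^{\beta,p}(\mathbb{R}^n)}$. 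For $1<p<n/\beta$ the norm is $\|f\|_{\dot{W}^{\beta,p}(\mathbb{R}^n)}=\|(-\triangle)^{\beta/2}f\|_{L^p(\mathbb{R}^n)}$, and the Riesz representation $f=I_\beta\big((-\triangle)^{\beta/2}f\big)$ together with the positivity of the Riesz kernel yields the pointwise bound $|f|\le c\,I_\beta g$ with $g:=|(-\triangle)^{\beta/2}f|\ge0$ and $\|g\|_{L^p(\mathbb{R}^n)}=\|f\|_{\dot{W}^{\beta,p}(\mathbb{R}^n)}$; hence $E_{2^k}\subseteq\{I_\beta g\ge c^{-1}2^k\}$. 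Since $Cap_{\mathbb{R}^n}^{\beta,p}$ is comparable to the homogeneous Riesz capacity $R_{\beta,p}(S):=\inf\{\|h\|_{L^p}^p:\ h\ge0,\ I_\beta h\ge1\text{ on }S\}$ (see \cite{AH}), everything reduces to
\[
\sum_{k\in\mathbb{Z}}2^{kp}\,R_{\beta,p}\big(\{I_\beta g\ge2^k\}\big)\;\lesssim\;\|g\|_{L^p(\mathbb{R}^n)}^p,\qquad g\ge0.
\]

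To establish this I would repeat, mutatis mutandis, the proof of Lemma~\ref{le-4.2}, replacing $(P_\alpha,P^\ast_\alpha,C^{\alpha,p}_{\mathbb{R}^{n+1}_+})$ by $(I_\beta,I_\beta,R_{\beta,p})$. Fix a compact $E_k\subseteq\{I_\beta g>2^k\}$ with capacitary measure $\mu_k$, so that the analogue of Proposition~\ref{prop-4.1}(ii) gives $\mu_k(E_k)=R_{\beta,p}(E_k)=\int_{\mathbb{R}^n}(I_\beta\mu_k)^{p'}\,dx$ and $I_\beta(I_\beta\mu_k)^{p'-1}\le1$ on $\mathrm{supp}\,\mu_k$. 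On $E_k$ one has $I_\beta g>2^k$, whence $\mu_k(E_k)\le2^{-k}\int I_\beta g\,d\mu_k=2^{-k}\int_{\mathbb{R}^n}g\,(I_\beta\mu_k)\,dx$; multiplying by $2^{kp}$, summing over $k$, and applying H\"older yields $S\le\|g\|_{L^p}T^{1/p'}$, where $S:=\sum_k2^{kp}\mu_k(E_k)$ and $T:=\big\|\sum_k2^{k(p-1)}I_\beta\mu_k\big\|_{L^{p'}(\mathbb{R}^n)}^{p'}$. It then remains to prove $T\lesssim S$: this is the heart of the matter and is obtained exactly as in Lemma~\ref{le-4.2}, splitting into the cases $2\le p<\infty$ and $1<p<2$, exploiting the nesting $E_{k+1}\subseteq E_k$, a telescoping/mean-value estimate for $\big(\sum_k2^{k(p-1)}I_\beta\mu_k\big)^{p'}$, and the pointwise bound $I_\beta(I_\beta\mu_k)^{p'-1}\le1$ on $\mathrm{supp}\,\mu_j$ for $j\ge k$. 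Combining $S\le\|g\|_{L^p}T^{1/p'}$ with $T\lesssim S$ gives $S\lesssim\|g\|_{L^p}^p$, i.e.\ part (i) for $1<p<n/\beta$. Part (ii) in this range then follows from the elementary pointwise domination $\mathcal{M}f\le c\,\mathcal{M}(I_\beta g)\lesssim I_\beta g$ for $g\ge0$ (the last step because the average of $I_\beta g$ on $B(x,r)$ is $\lesssim I_\beta g(x)+r^{\beta-n}\int_{B(x,2r)}g\lesssim I_\beta g(x)$), so that $\{\mathcal{M}f\ge s\}\subseteq\{I_\beta g\ge cs\}$; one could also invoke Lemma~\ref{lemma 2}(iii).

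The endpoint cases need separate treatment. For $p=1$ I would combine the generalized coarea formula (\ref{7}) with the fractional isocapacitary inequality $Cap_{\mathbb{R}^n}^{\beta,1}(E)\lesssim Per_\beta(E)$ (which follows from $Cap_{\mathbb{R}^n}^{\beta,1}(E)\le\|1_E\|_{\dot{W}^{\beta,1}(\mathbb{R}^n)}=2\,Per_\beta(E)$ after mollifying $1_E$) to get
\[
\int_0^\infty Cap_{\mathbb{R}^n}^{\beta,1}\big(\{|f|>s\}\big)\,ds\;\lesssim\;\int_0^\infty Per_\beta\big(\{|f|>s\}\big)\,ds\;\approx\;\|f\|_{\dot{W}^{\beta,1}(\mathbb{R}^n)},
\]
and likewise for $\mathcal{M}f$ after the pointwise reduction; the borderline $p=n/\beta$ is covered by the corresponding result in \cite{AH}. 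I expect the main obstacle to be the inequality $T\lesssim S$: as in Lemma~\ref{le-4.2} it rests on the subtle support property $I_\beta(I_\beta\mu_k)^{p'-1}\le1$ on $\mathrm{supp}\,\mu_j$ for $j\ge k$ of the capacitary measures, together with an interpolation-type rearrangement of the sum whose two regimes $p\ge2$ and $1<p<2$ genuinely require different manipulations; everything else amounts to routine dyadic bookkeeping.
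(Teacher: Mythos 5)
The paper does not actually prove Lemma~\ref{lemma 4}; it simply cites \cite[Lemma 2.1]{Zhai} ``and the references therein.'' So there is no in-paper argument to compare against, and your proposal has to be judged on its own merits as a self-contained reconstruction.

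For $1<p<n/\beta$ your route is sound and is the standard one: the Riesz representation $f=I_\beta\big((-\triangle)^{\beta/2}f\big)$ with $g=|(-\triangle)^{\beta/2}f|$ gives $|f|\lesssim I_\beta g$ and $\|g\|_{L^p}=\|f\|_{\dot{W}^{\beta,p}}$, the comparability of $Cap^{\beta,p}_{\mathbb{R}^n}$ with the Riesz capacity $R_{\beta,p}$ is classical, and the strong type inequality for $R_{\beta,p}$ is exactly the Adams--Hedberg--Maz'ya argument, i.e.\ the $I_\beta$-version of the $T\lesssim S$ estimate carried out for $P_\alpha$ in the paper's Lemma~\ref{le-4.2}. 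Your pointwise bound $\mathcal{M}(I_\beta g)\lesssim I_\beta g$ for $g\ge0$ is correct (far/near split on $B(x,r)$ and $B(x,2r)^c$), and it reduces part (ii) to part (i) in this range.

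The genuine gap is in part (ii) at the endpoint $p=1$. You reduce (i) for $p=1$ via the coarea formula~\eqref{7} and $Cap^{\beta,1}_{\mathbb{R}^n}\lesssim Per_\beta$, which is fine, and you then say (ii) follows ``likewise for $\mathcal{M}f$ after the pointwise reduction.'' But the pointwise reduction $\mathcal{M}f\lesssim I_\beta g$ with $\|g\|_{L^p}\approx\|f\|_{\dot{W}^{\beta,p}}$ is unavailable at $p=1$: there the norm $\|f\|_{\dot{W}^{\beta,1}}$ is the Besov difference-quotient norm, not $\|(-\triangle)^{\beta/2}f\|_{L^1}$, and the coarea route applied directly to $\mathcal{M}f$ would require $\|\mathcal{M}f\|_{\dot{W}^{\beta,1}}\lesssim\|f\|_{\dot{W}^{\beta,1}}$, which is not something you have. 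The standard fix is to use $Cap^{\beta,1}_{\mathbb{R}^n}\approx H^{n-\beta}_\infty$ (mentioned in the paper's Remark~\ref{4.2}) together with the Orobitg--Verdera theorem that $\mathcal{M}$ is bounded with respect to the Choquet integral against $H^{n-\beta}_\infty$, i.e.\ $\int_0^\infty H^{n-\beta}_\infty(\{\mathcal{M}f\ge s\})\,ds\lesssim\int_0^\infty H^{n-\beta}_\infty(\{|f|\ge s\})\,ds$, and then invoke part~(i). Without some such ingredient, (ii) at $p=1$ is not established. A secondary, smaller issue is the borderline $p=n/\beta$: there the paper's norm is again the Besov one, so the Riesz-capacity reduction does not apply verbatim; a bare citation to \cite{AH} should be replaced by a reference to the Besov-capacity strong type inequality (or to the $\mathbb{R}^{2n}$-lifting of Lemma~\ref{lemma 3}, as in Adams--Xiao), and one should note explicitly that the maximal-function version (ii) also needs attention there.
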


        For handling the endpoint case $p=n/\beta$ when $p>q,$ we need the following Riesz potentials  on $\mathbb{R}^{2n}$, see Adam-Xiao \cite{Adams Xiao} and Adam \cite{Adams} . For $\gamma\in (0,2n),$
    $$I^{(2n)}_\gamma\ast f(z)=\int_{\mathbb{R}^{2n}}|z-y|^{\gamma-2n}f(y)dy, \quad z\in \mathbb{R}^{2n}.$$
    For $\gamma\in (0,2n),$ $\dot{\mathcal{L}}^p_{\gamma}(\mathbb{R}^{2n})$ is defined as the completion of $I^{2n}_\gamma\ast C^\infty_0(\mathbb{R}^{2n})$ with  $\|I^{2n}_\gamma\ast f\|_{\dot{\mathcal{L}}^p_{\gamma}(\mathbb{R}^{2n})}=\|f\|_{L^{p}(\mathbb{R}^{2n})}.$

The following result is a special case of \cite[Theorem 5.2]{Adams} or \cite[Theorem A]{Adams Xiao}.

\begin{lemma}\label{lemma 3}
Let $\beta\in (0,n).$ Then there are a linear extension operator
$$\mathcal{E}:\ \dot{W}^{\beta,n/\beta}(\mathbb{R}^n)\longrightarrow\dot{\mathcal{L}}^{n/\beta}_{2\beta}(\mathbb{R}^{2n}),$$
and a linear restriction operator
$$\mathcal{R}:\ \dot{\mathcal{L}}^{n/\beta}_{2\beta}(\mathbb{R}^{2n})\longrightarrow\dot{W}^{\beta,n/\beta}(\mathbb{R}^n)$$
such that $\mathcal{R}\mathcal{E}$ is the identity. Moreover,
    \item{\rm (i)} For $f\in \dot{W}^{\beta,n/\beta}(\mathbb{R}^n)$,
$    \|\mathcal{E} f\|_{\dot{\mathcal{L}}^{n\beta}_{2\beta}(\mathbb{R}^{2n})}\lesssim\|f\|_{\dot{W}^{\beta,n/\beta}(\mathbb{R}^n)}$.
\item{\rm (ii)} For $g\in \dot{\mathcal{L}}^{n/\beta}_{2\beta}(\mathbb{R}^{2n})$,
$    \|\mathcal{R} g\|_{\dot{W}^{\beta,n/\beta}(\mathbb{R}^n)}\lesssim\|g\|_{\dot{\mathcal{L}}^{n/\beta}_{2\beta}(\mathbb{R}^{2n})}.$

\end{lemma}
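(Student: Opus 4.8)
The plan is to obtain this result as a direct specialization of the trace--extension theory for Riesz potential spaces in \cite[Theorem 5.2]{Adams} and \cite[Theorem A]{Adams Xiao}, once the endpoint space $\dot{W}^{\beta,n/\beta}(\mathbb{R}^n)$ is correctly identified. First I would record that, by the very definition used in this paper, for $p=n/\beta$ the quantity
$\|f\|_{\dot{W}^{\beta,n/\beta}(\mathbb{R}^n)}=\big(\int_{\mathbb{R}^n}\|\triangle^k_h f\|^p_{L^p(\mathbb{R}^n)}|h|^{-n-p\beta}\,dh\big)^{1/p}$
with $k=1+[\beta]>\beta$ is exactly a homogeneous Besov seminorm, so that $\dot{W}^{\beta,n/\beta}(\mathbb{R}^n)=\dot{B}^{\beta}_{n/\beta,\,n/\beta}(\mathbb{R}^n)$. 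Hence it suffices to produce bounded linear operators $\mathcal{E}$ and $\mathcal{R}$ between $\dot{\mathcal{L}}^{n/\beta}_{2\beta}(\mathbb{R}^{2n})$ and $\dot{B}^{\beta}_{n/\beta,\,n/\beta}(\mathbb{R}^n)$ with $\mathcal{R}\mathcal{E}=\mathrm{id}$ and the two norm bounds (i)--(ii).

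Next I would recall the cited results in their scale-invariant form: for $0<\gamma<N$, $1<p<\infty$ and $1\le n<N$, restriction of functions on $\mathbb{R}^N$ to the subspace $\mathbb{R}^n$ carries $\dot{\mathcal{L}}^{p}_{\gamma}(\mathbb{R}^{N})$ boundedly onto the homogeneous Besov space $\dot{B}^{\,\gamma-(N-n)/p}_{p,p}(\mathbb{R}^{n})$ (the index $\gamma-(N-n)/p$ being positive), and there is a bounded linear extension operator going the other way --- of convolution/Calder\'on type, hence first defined on the dense classes entering the definitions of these spaces and then extended by continuity --- whose composition with the restriction is the identity. The argument then reduces to arithmetic: take $N=2n$, $\gamma=2\beta$ and $p=n/\beta$. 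Since $\beta\in(0,n)$ one has $0<2\beta<2n$ and $1<n/\beta<\infty$; the codimension is $N-n=n$, so the trace smoothness is $\gamma-(N-n)/p=2\beta-n\cdot(\beta/n)=\beta>0$ and the trace integrability is $p=n/\beta$. Thus the trace space is precisely $\dot{B}^{\beta}_{n/\beta,\,n/\beta}(\mathbb{R}^n)=\dot{W}^{\beta,n/\beta}(\mathbb{R}^n)$, and taking $\mathcal{R}$ to be this restriction and $\mathcal{E}$ the associated extension yields (i), (ii) and $\mathcal{R}\mathcal{E}=\mathrm{id}$ word for word.

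The hard part here is bookkeeping rather than analysis. One must (a) check that the \emph{homogeneous} (scale-invariant) versions of all the spaces involved are the ones covered by \cite{Adams,Adams Xiao}, and that density of $I^{(2n)}_{2\beta}\ast C_0^\infty(\mathbb{R}^{2n})$ in $\dot{\mathcal{L}}^{n/\beta}_{2\beta}(\mathbb{R}^{2n})$ together with density of $C_0^\infty(\mathbb{R}^n)$ in $\dot{W}^{\beta,n/\beta}(\mathbb{R}^n)$ permits transporting $\mathcal{E}$ and $\mathcal{R}$ to the completions used in this paper; and (b) reconcile the finite-difference convention --- the $(1+[\beta])$-th order difference appearing in the present definition of $\dot{W}^{\beta,n/\beta}$ versus whatever order is used in the references --- which alters the Besov seminorm only by a fixed multiplicative constant and is therefore harmless for the $\lesssim$ assertions. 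Once (a) and (b) are verified, one simply invokes \cite[Theorem 5.2]{Adams} (equivalently \cite[Theorem A]{Adams Xiao}).
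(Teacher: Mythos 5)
Your proposal takes essentially the same route as the paper: the paper gives no proof of this lemma, stating only that it is "a special case of [Adams, Theorem 5.2] or [Adams-Xiao, Theorem A]." You correctly supply the specialization that the paper leaves implicit — identifying $\dot{W}^{\beta,n/\beta}(\mathbb{R}^n)$ with $\dot{B}^{\beta}_{n/\beta,n/\beta}(\mathbb{R}^n)$ and checking that $N=2n$, $\gamma=2\beta$, $p=n/\beta$ yields trace smoothness $\gamma-(N-n)/p=\beta$ and integrability $p=n/\beta$ — so the two are in agreement.
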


\begin{lemma}
\label{lemma 1}
If $\alpha\in(0,2), \beta\in(0,n)$ and $(x,t)\in \mathbb{R}^{n+1}_+,$ then
$$\int_{\mathbb{R}^n}p^{\alpha}_t(y) |y-x|^{\beta-n}dy\lesssim (t^2+|x|^2)^{{(\beta-n)}/{2}}.$$
\end{lemma}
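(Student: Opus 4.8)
The plan is to estimate the integral $\int_{\mathbb{R}^n} p^{\alpha}_t(y)\,|y-x|^{\beta-n}\,dy$ by splitting the domain of integration into the region close to the singularity of the Riesz kernel, namely $|y-x| < \tfrac{1}{2}\sqrt{t^2+|x|^2}$, and its complement. Write $\rho := \sqrt{t^2+|x|^2}$ for brevity. On the far region $|y-x| \geq \rho/2$ we simply bound $|y-x|^{\beta-n} \lesssim \rho^{\beta-n}$ (since $\beta - n < 0$) and pull this factor out, leaving $\rho^{\beta-n}\int_{\mathbb{R}^n} p^{\alpha}_t(y)\,dy = \rho^{\beta-n}$ by the normalization $\int_{\mathbb{R}^n} p^{\alpha}_t(y)\,dy = 1$. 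This already gives the desired bound on that piece.

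For the near region $|y-x| < \rho/2$, the point is that $p^{\alpha}_t(y)$ is essentially constant there. Indeed, for such $y$ one has $|y| \geq |x| - |y-x| \geq |x| - \rho/2$ and also $|y| \le |x|+\rho/2$, so $|y|^2 + t^2 \approx |x|^2 + t^2 = \rho^2$ (the comparison constants being absolute), whence
$$
p^{\alpha}_t(y) = \frac{c(n,\alpha)\,t^\alpha}{(|y|^2+t^2)^{(n+\alpha)/2}} \lesssim \frac{t^\alpha}{\rho^{n+\alpha}} \le \frac{1}{\rho^{n}},
$$
using $t \le \rho$. Therefore
$$
\int_{|y-x|<\rho/2} p^{\alpha}_t(y)\,|y-x|^{\beta-n}\,dy \lesssim \rho^{-n}\int_{|y-x|<\rho/2} |y-x|^{\beta-n}\,dy \lesssim \rho^{-n}\cdot \rho^{\beta} = \rho^{\beta-n},
$$
where the middle integral converges precisely because $\beta - n > -n$, i.e. $\beta > 0$, and evaluates to a constant times $\rho^\beta$ by scaling. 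Adding the two regions gives $\int_{\mathbb{R}^n} p^{\alpha}_t(y)\,|y-x|^{\beta-n}\,dy \lesssim \rho^{\beta-n} = (t^2+|x|^2)^{(\beta-n)/2}$, which is the claim.

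There is no real obstacle here; the only point requiring a little care is the two-sided comparison $|y|^2 + t^2 \approx t^2 + |x|^2$ on the near region, which must be checked uniformly in $(x,t)$ (it follows from the triangle inequality as indicated, with constants independent of everything). One should also note the hypothesis $\beta \in (0,n)$ is used in both pieces: $\beta < n$ makes $|y-x|^{\beta-n}$ decreasing so the far-region bound works, and $\beta > 0$ makes the near-region integral of $|y-x|^{\beta-n}$ finite. A single change of variables $z = (y-x)/\rho$ handles the scaling in the near-region integral cleanly.
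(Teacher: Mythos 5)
Your proof is correct, and it takes a cleaner route than the paper's. The paper first scales by $t$ to reduce to $J(x,1)$, treats $x=0$ separately, and splits $\mathbb{R}^n$ at the radius $|x|/2$ around $x$; the far region then needs two separate sub-estimates ($I_2\lesssim|x|^{\beta-n}$ and $I_2\lesssim 1$, combined afterwards) precisely because the split radius $|x|/2$ degenerates as $|x|\to 0$. You instead split at $\rho/2$ with $\rho=\sqrt{t^2+|x|^2}$, which never degenerates, so the far region is dispatched in one line by pulling out $\rho^{\beta-n}$ and using the normalization $\int p^\alpha_t=1$, and the special case $x=0$ disappears. Your near-region argument also matches the paper's in spirit (show $p^\alpha_t(y)$ is roughly constant there and integrate $|y-x|^{\beta-n}$), though you bound $p^\alpha_t(y)\lesssim\rho^{-n}$ outright using $t\le\rho$, while the paper keeps the sharper factor $(1+|x|^2)^{-(n+\alpha)/2}$ and absorbs the slack at the end; both work. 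The one thing you rightly flag as needing care is the uniform lower bound $|y|^2+t^2\gtrsim\rho^2$ on $|y-x|<\rho/2$: this does hold (if $t\ge\rho/2$ it is immediate from $t^2$; if $t<\rho/2$ then $|x|>\tfrac{\sqrt 3}{2}\rho$, whence $|y|\ge|x|-\rho/2\ge\tfrac{\sqrt3-1}{2}\rho$), and spelling out these two cases would make the proof fully self-contained.
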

\begin{proof}
Define $$J(x,t)=\int_{\mathbb{R}^n}p^{\alpha}_t(y) |y-x|^{\beta-n}dy
=c(n,\alpha)\int_{\mathbb{R}^n}\frac{t^\alpha|y-x|^{\beta-n}}{(|y|^2+t^2)^{{(n+\alpha)}/{2}}} dy.
$$
Via the change of variables: $x\longrightarrow tx\ \&\ y\longrightarrow ty,$ it is sufficient to show that
$$J(x,1)\lesssim (1+|x|^2)^{{(\beta-n)}/{2}}.$$
Since $J(0,1)\lesssim 1,$ we may assume that $|x|>0.$ Write $J(x,1)\lesssim I_1(x)+I_2(x)$, where
$$\left\{\begin{aligned}
&I_{1}(x):= \int_{B(x,|x|/2)}\frac{|y-x|^{\beta-n}}{(|y|^2+1)^{{(n+\alpha)}/{2}}}dy;\\
&I_{2}(x):=\int_{\mathbb{R}^n\backslash B(x,|x|/2)}\frac{|y-x|^{\beta-n}}{(|y|^2+1)^{{(n+\alpha)}/{2}}}dy.
\end{aligned}\right.$$

Since $|x-y|\leq |x|/2$ implies that $|y|\approx |x|,$ we have
\begin{eqnarray*}
I_1(x)
&\lesssim& (1+|x|^2)^{-{(n+\alpha)}/{2}}\int_{B(x,|x|/2)}|y-x|^{\beta-n}dy\\
&\lesssim& (1+|x|^2)^{{(\beta-n)}/{2}}.
\end{eqnarray*}
If $|x-y|>|x|/2,$ then $|y-x|^{\beta-n}\lesssim |x|^{\beta-n},$ thus
\begin{eqnarray*}
I_2(x)&\lesssim&|x|^{\beta-n}\int_{\mathbb{R}^n\backslash B(x,|x|/2)}\frac{1}{(|y|^2+1)^{{(n+\alpha)}/{2}}}dy\  \lesssim\ |x|^{\beta-n}.
\end{eqnarray*}
For $|x-y|>|x|/2$, it holds $|y|<3|x-y|$ and
\begin{eqnarray*}
I_2(x)=\int_{\mathbb{R}^n\backslash B(x,|x|/2)}\frac{1}{(|y|^2+1)^{{(n+\alpha)}/{2}}|y|^{n-\beta}}dy\lesssim1.
\end{eqnarray*}
So, $I_2(x)\lesssim (1+|x|^2)^{{(\beta-n)}/{2}}$ and $J(x,1)\lesssim (1+|x|^2)^{{(\beta-n)}/{2}}.$
\end{proof}

\section{Embeddings  of $L^p(\mathbb{R}^{n})$ to $L^q(\mathbb{R}^{n+1}_+,\mu)$}\label{sec-3}
In this section,  we focus on the embedding \eqref{5}:
$$\|P_{\alpha} f(\cdot,\cdot)\|_{L^{q}(\mathbb{R}^{n+1}_+,\mu)}\lesssim \|f\|_{L^p(\mathbb R^n)}
.$$

\subsection{Embeddings  of $L^p(\mathbb{R}^{n})$ to $L^q(\mathbb{R}^{n+1}_+,\mu)$   when $p\leq q$}
Given $1<p\leq q<\infty$.  For $\lambda>0$, define
$$c_{\alpha,p}(\mu; \lambda):=\inf\left\{C^{\alpha,p}_{\mathbb{R}^{n+1}_+}(K):\ \text{ compact }K\subset \mathbb R^{n+1}_{+},\quad  \mu(K)\geq \lambda\right\}.$$
\begin{theorem}\label{thm-3.1}
Let $1<p\leq q<\infty$ and $\mu\in \mathcal{M}_{+}(\mathbb R^{n+1}_{+})$.
\item{\rm (i)} The embedding \eqref{5} holds if and only if $$\sup_{\lambda\in(0,\infty)}\lambda^{p/q}/c_{\alpha,p}(\mu;\lambda)<\infty.$$
\item{\rm (ii)} If $1<p<q<\infty$, then $\sup_{\lambda\in(0,\infty)}\lambda^{p/q}/c_{\alpha,p}(\mu;\lambda)<\infty$ can be replaced by
$$\sup_{ (r, x_{0}, t_{0})\in(0,\infty)\times\mathbb R^{n+1}_+,  t_{0}\lesssim r}\frac{\mu(B_{r}(x_{0}, t_{0})) }{r^{qn/p}}<\infty.$$
\end{theorem}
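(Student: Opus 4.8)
\textbf{Proof proposal for Theorem \ref{thm-3.1}.}

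\emph{Part (i).} The plan is to run the standard capacitary characterization of trace inequalities, in the spirit of Maz'ya, adapted to the Caffarelli--Silvestre setting via the capacitary strong type inequality (Lemma \ref{le-4.2}). For the necessity direction, suppose \eqref{5} holds with constant $A$. Fix a compact $K\subset\mathbb R^{n+1}_+$ with $\mu(K)\geq\lambda$ and take any admissible $f\geq 0$ with $P_\alpha f\geq 1$ on $K$; then $\lambda\leq\mu(K)\leq\int_{\mathbb R^{n+1}_+}|P_\alpha f|^q\,d\mu\leq A^q\|f\|_{L^p(\mathbb R^n)}^q$, so taking the infimum over such $f$ gives $\lambda\leq A^q\,C^{\alpha,p}_{\mathbb R^{n+1}_+}(K)^{q/p}$, hence $\lambda^{p/q}\lesssim C^{\alpha,p}_{\mathbb R^{n+1}_+}(K)$; taking the infimum over $K$ yields $\lambda^{p/q}\lesssim c_{\alpha,p}(\mu;\lambda)$, which is the asserted bound. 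For sufficiency, assume $\lambda^{p/q}\leq M\,c_{\alpha,p}(\mu;\lambda)$ for all $\lambda>0$. By definition of $c_{\alpha,p}$, for every compact $K$ one has $\mu(K)^{p/q}\leq M\,C^{\alpha,p}_{\mathbb R^{n+1}_+}(K)$; approximating the superlevel sets $\{|P_\alpha f|>s\}$ from inside by compacts and using the outer-regularity/continuity properties established in Propositions \ref{prop-2.1}--\ref{prop-2.2}, the same inequality passes to $E_s:=\{(x,t):|P_\alpha f(x,t)|>s\}$, i.e. $\mu(E_s)^{p/q}\lesssim_M C^{\alpha,p}_{\mathbb R^{n+1}_+}(E_s)$. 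Then, using the layer-cake formula together with $q\geq p$ (so that $\ell^{p/q}\hookrightarrow\ell^1$ after a discretization in dyadic levels $s=2^j$):
\begin{eqnarray*}
\int_{\mathbb R^{n+1}_+}|P_\alpha f|^q\,d\mu
&=&q\int_0^\infty s^{q-1}\mu(E_s)\,ds
\;\lesssim\; \Big(\int_0^\infty \big(s^{q-1}\mu(E_s)\big)^{p/q}\,d\big(s^{q}\big)^{\,\cdot}\Big)^{q/p}
\end{eqnarray*}
— more precisely, discretize: $\int|P_\alpha f|^q d\mu\approx\sum_j 2^{jq}\mu(E_{2^j})$ and $\big(\sum_j 2^{jq}\mu(E_{2^j})\big)^{p/q}\leq\sum_j 2^{jp}\mu(E_{2^j})^{p/q}\lesssim_M\sum_j 2^{jp}C^{\alpha,p}_{\mathbb R^{n+1}_+}(E_{2^j})\lesssim\int_0^\infty C^{\alpha,p}_{\mathbb R^{n+1}_+}(E_s)\,ds^p\lesssim\|f\|_{L^p(\mathbb R^n)}^p$, where the last step is exactly Lemma \ref{le-4.2}. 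Raising to the power $1/q$ gives \eqref{5}. The main subtlety here is the measurability/continuity bookkeeping needed to pass the capacitary bound from compact $K$ to the open superlevel sets and to justify the dyadic comparison, all of which is supplied by Section \ref{sec-2}.

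\emph{Part (ii), the ball reduction when $p<q$.} I would show the two conditions
$$(\mathrm A)\quad \sup_{\lambda>0}\frac{\lambda^{p/q}}{c_{\alpha,p}(\mu;\lambda)}<\infty
\qquad\Longleftrightarrow\qquad
(\mathrm B)\quad \sup_{t_0\lesssim r}\frac{\mu(B_r(x_0,t_0))}{r^{qn/p}}<\infty.$$
The implication $(\mathrm A)\Rightarrow(\mathrm B)$ is immediate: apply the definition of $c_{\alpha,p}$ to the single compact set $K=\overline{B_r(x_0,t_0)}$, getting $\mu(B_r(x_0,t_0))^{p/q}\lesssim C^{\alpha,p}_{\mathbb R^{n+1}_+}(\overline{B_r(x_0,t_0)})\lesssim r^n$ by the upper bound of Theorem \ref{th-4.1} in the regime $t_0\lesssim r$; rearranging gives $\mu(B_r(x_0,t_0))\lesssim r^{qn/p}$. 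For the converse $(\mathrm B)\Rightarrow(\mathrm A)$ one cannot use a single ball; instead I would pass through the equivalent embedding statement \eqref{5} and a covering argument. Assuming $(\mathrm B)$, I would prove the weak-type estimate $\mu(\{|P_\alpha f|>s\})\lesssim (s^{-1}\|f\|_{L^p})^{q}$ and then upgrade. The key geometric input is Lemma \ref{lemma 2}(i),(iii): the superlevel set $\{(x,t):|P_\alpha f(x,t)|>s\}$ is contained, up to the tent construction, in $T(R_s)$ with $R_s=\{y:\sup_{|x-y|<t}|P_\alpha f(x,t)|>s\}\subseteq\{y:\theta_\alpha\mathcal M f(y)>s\}$. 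One then performs a Whitney/Besicovitch decomposition of the open set $R_s$ into balls $B(x_i,r_i)$ with bounded overlap; the corresponding tents $T(B(x_i,r_i))$ are comparable to balls $B_{r_i}(x_i,t_i)$ with $t_i\approx r_i$, so $\mu(T(B(x_i,r_i)))\lesssim r_i^{qn/p}$ by $(\mathrm B)$. Summing, $\mu(\{|P_\alpha f|>s\})\lesssim\sum_i r_i^{qn/p}=\sum_i\big(r_i^{n}\big)^{q/p}\leq\big(\sum_i r_i^n\big)^{q/p}\lesssim |R_s|^{q/p}\lesssim\big(s^{-p}\|f\|_{L^p}^p\big)^{q/p}$, using $q/p>1$ and the weak $(p,p)$ bound for $\mathcal M$. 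This is the weak-type version of \eqref{5}; since the pair $(p,q)$ in the assumption can be perturbed to a slightly smaller exponent $q_1$ with the same gap, a Marcinkiewicz-type interpolation (as in the proof of Theorem preceding this section) promotes the family of weak bounds to the strong bound \eqref{5}, and then Part (i) gives back $(\mathrm A)$. Alternatively, and more cleanly, one can recover $(\mathrm A)$ directly: for compact $K$ with $\mu(K)\geq\lambda$, apply the just-proved \eqref{5} to the capacitary extremal $f_K$ of Proposition \ref{prop-4.1}, getting $\lambda\leq\mu(K)\leq\int|P_\alpha f_K|^q d\mu\lesssim\|f_K\|_{L^p}^q=C^{\alpha,p}_{\mathbb R^{n+1}_+}(K)^{q/p}$.

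\emph{Main obstacle.} I expect the crux to be the direction $(\mathrm B)\Rightarrow(\mathrm A)$, specifically the passage from a ball (Carleson-type) condition on $\mu$ back to the global capacitary condition when $p<q$. The delicate points are: (a) identifying tents $T(B(x_i,r_i))$ over Whitney balls with the anisotropic balls $B_{r_0}(x_0,t_0)$ used in Theorem \ref{th-4.1}, so that the hypothesis $t_0\lesssim r_0$ is genuinely in force; (b) exploiting $q>p$ to convert the subadditive sum $\sum_i r_i^n$ into the superadditive sum $\sum_i r_i^{qn/p}$ via $\|\cdot\|_{\ell^1}\leq\|\cdot\|_{\ell^{p/q}}$ on the sequence $(r_i^n)$ — this is exactly where the strict inequality $p<q$ is used and why part (ii) fails at $p=q$; and (c) the interpolation step that turns weak-type bounds into \eqref{5}. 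Everything else — necessity, the compact-to-open reduction, and the layer-cake summation — is routine given Lemma \ref{le-4.2}, Theorem \ref{th-4.1}, and the regularity of $C^{\alpha,p}_{\mathbb R^{n+1}_+}$ proved in Section \ref{sec-2}.
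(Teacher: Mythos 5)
Part (i) of your proposal matches the paper's argument in substance. Your necessity direction is cleaner than the paper's (which routes through $\|P^{\ast}_{\alpha}\mu\!\mid_K\|_{L^{p'}}$), and your dyadic discretization of the sufficiency direction is a legitimate variant of the paper's continuous layer-cake manipulation; both hinge on the capacitary strong-type inequality (Lemma~\ref{le-4.2}) in exactly the same way. The implication $(\mathrm A)\Rightarrow(\mathrm B)$ in part (ii) also coincides with the paper.

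Where you genuinely diverge is in $(\mathrm B)\Rightarrow(\mathrm A)$. The paper works on the dual side: from condition $(\mathrm B)$ it derives $\|P^{\ast}_{\alpha}\mu\!\mid_K\|_{L^{p'}(\mathbb R^n)}\lesssim(\mu(K))^{1/q'}$ by writing $P^{\ast}_{\alpha}\mu\!\mid_K(x_0)\lesssim\int_0^\infty\mu\!\mid_K(B_r(x_0,t_0))\,r^{-n-1}\,dr$, applying Minkowski, and splitting the $r$-integral at $\delta=(\mu(K))^{p/nq}$ (small-$r$ portion controlled by $(\mathrm B)$, large-$r$ portion by the trivial bound $\mu\!\mid_K\leq\mu(K)$). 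That estimate, fed into the weak-type chain, gives $(\mu(K))^{1/q}\lesssim(C^{\alpha,p}_{\mathbb R^{n+1}_+}(K))^{1/p}$ directly, with no interpolation. Your route — $L^{\alpha,\beta}_s\subset T(R_s)$ with $R_s\subset\{\mathcal M f>s/\theta_\alpha\}$, a Whitney cover of $R_s$, superadditivity $\sum(r_i^n)^{q/p}\leq(\sum r_i^n)^{q/p}$, and the weak $(p,p)$ bound for $\mathcal M$ — is essentially a transplant of the Section~\ref{sec-4} machinery back into Section~\ref{sec-3}. It is workable, but note that the weak-type estimate $\mu(L^{\alpha,\beta}_s)\lesssim s^{-q}\|f\|_{L^p}^q$ alone cannot yield \eqref{5} by layer-cake (the integral $\int s^{q-1}\cdot s^{-q}\,ds$ diverges), so the interpolation step you sketch is genuinely load-bearing and needs to be spelled out: to have weak bounds at two distinct pairs $(p_0,q_0)$, $(p_1,q_1)$ while keeping hypothesis $(\mathrm B)$ unchanged you must fix the ratio $q_i/p_i=q/p$, and then Marcinkiewicz for off-diagonal exponents applies because $q_i>p_i$.

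Two concrete inaccuracies in your sketch of $(\mathrm B)\Rightarrow(\mathrm A)$ are worth flagging. First, the tent $T(B(x_i,r_i))$ is \emph{not} comparable as a set to any single anisotropic ball $B_{r_i}(x_i,t_i)$: the tent lives in the cone $\{|x-x_i|+t<r_i\}$, while $B_{r}(x_0,t_0)=\{|x-x_0|<r/2,\ r<t-t_0<2r\}$ is a box at height $\gtrsim r$. What is true, and what you should say, is that $\mu(T(B(x_i,r_i)))\lesssim r_i^{qn/p}$ follows from $(\mathrm B)$ by tiling the tent with $\sim 2^{jn}$ anisotropic balls of radius $\sim 2^{-j}r_i$ at dyadic heights and summing a geometric series $\sum_j 2^{jn(1-q/p)}$ — convergent precisely because $q>p$. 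Second, the "direct alternative" at the end of your argument is fine, but it is not an alternative to the interpolation: it uses "the just-proved \eqref{5}", which you only have after the interpolation step (or some substitute for it). In short, part (i) is the paper's proof; part (ii) is correct in the $(\mathrm A)\Rightarrow(\mathrm B)$ direction, and in the other direction you propose a legitimately different but more machinery-heavy route than the paper's direct Carleson/potential split-integral argument, with the tent-versus-anisotropic-ball identification needing correction and the interpolation step needing to be made precise.
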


\begin{proof}
(i) Suppose that \eqref{5} is true. Let $K$ be a compact set in $\mathbb R^{n+1}_{+}$. Denote by $\mu\mid_{K}$ the restriction of $\mu$ to the set $K$.
\begin{eqnarray*}
\int_{\mathbb R^{n}}f(x)P^{\ast}_{\alpha}\mu\mid_{K}(x)dx &=&\int_{\mathbb R^{n+1}_{+}}P_{\alpha}f(x,t)d\mu\mid_{K}(x,t)\\
&\lesssim& \|P_{\alpha}f\|_{L^{q}(\mathbb R^{n+1}_{+},\mu)}(\mu(K))^{1/q'}
\lesssim \|f\|_{L^{p}(\mathbb R^{n})}(\mu(K))^{1/q'},
\end{eqnarray*}
which means that
$\|P^{\ast}_{\alpha}\mu\mid_{K}\|_{L^{p'}(\mathbb R^{n})}\lesssim (\mu(K))^{1/q'}$. Define
$$E_{\lambda}(f)=\left\{(x,t)\in\mathbb R^{n+1}_{+}:\ |P_{\alpha}f(x,t)|\geq\lambda\right\}.$$
We can get
\begin{eqnarray*}
\lambda\mu(E_{\lambda}(f))&\leq&\int_{E_{\lambda}(f)}|P_{\alpha}f(x,t)|d\mu\mid_{E_{\lambda}(f)}(x,t)\\
&\leq&\|f\|_{L^{p}(\mathbb R^{n})}\|P^{\ast}_{\alpha}\mu\mid_{E_{\lambda}(f)}\|_{L^{p'}}\\
&\leq&\|f\|_{L^{p}(\mathbb R^{n})}(\mu(E_{\lambda}(f)))^{1/q'}.
\end{eqnarray*}
This implies $$\sup_{\lambda\in(0,\infty)}\lambda^{q}(\mu(E_{\lambda}(f)))\lesssim \|f\|_{L^{p}(\mathbb R^{n})}^{q}.$$

Choose a function $f\in L^{p}(\mathbb R^{n})$ such that $P_{\alpha}f\geq1$ on a given compact set $K\subset\mathbb R^{n+1}_{+}$, i.e., $K\subset E_{1}(f)$. We have
\begin{equation}\label{eq-4.4}
(\mu(K))^{1/q}\leq (\mu(E_{1}(f)))^{1/q}\lesssim \|f\|_{L^{p}(\mathbb R^{n})}.
\end{equation}
Take the infimum on both sides of (\ref{eq-4.4}), we obtain $(\mu(K))^{1/q}\lesssim (C^{\alpha,p}_{\mathbb{R}^{n+1}_+}(K))^{1/p}$. If $K$ is compact and $\mu(K)\geq \lambda$, then $\lambda^{p/q}\lesssim C^{\alpha,p}_{\mathbb{R}^{n+1}_+}(K)$ and hence, $\lambda^{p/q}\lesssim c(\mu;\lambda)$ due to
$$\lambda^{p/q}\lesssim\inf\left\{C^{\alpha,p}_{\mathbb{R}^{n+1}_+}(K),\ K \text{ is compact and }\mu(K)\geq\lambda \right\}.$$

Conversely, if
$\sup\limits_{\lambda\in(0,\infty)}\lambda^{p/q}/c_{\alpha,p}(\mu;\lambda)<\infty,$
then for any $\lambda>0$, $\lambda^{p/q}\lesssim C^{\alpha,p}_{\mathbb{R}^{n+1}_+}(K)$.
For any $\tau$, according to Lemma \ref{le-4.2}, one has
\begin{eqnarray*}
\tau^{p}C^{\alpha,p}_{\mathbb{R}^{n+1}_+}(E_{\tau})&=&\int^{\tau}_{0}C^{\alpha,p}_{\mathbb{R}^{n+1}_+}\left(\left\{(x,t)\in\mathbb R^{n+1}_{+}:\ P_{\alpha}f(x,t)\geq\tau\right\}\right)d\lambda^{p}\\
&\lesssim& \int^{\tau}_{0}C^{\alpha,p}_{\mathbb{R}^{n+1}_+}\left(\left\{(x,t)\in\mathbb R^{n+1}_{+}:\ P_{\alpha}f(x,t)\geq\lambda\right\}\right)d\lambda^{p}\\
&\lesssim& \int^{\infty}_{0}C^{\alpha,p}_{\mathbb{R}^{n+1}_+}\left(\left\{(x,t)\in\mathbb R^{n+1}_{+}:\ P_{\alpha}f(x,t)\geq\lambda\right\}\right)d\lambda^{p}\lesssim \|f\|_{L^{p}(\mathbb R^{n})}^{p}.
\end{eqnarray*}
By the layer cake representation, this implies
\begin{eqnarray*}
\int_{\mathbb R^{n+1}_{+}}|P_{\alpha}f(x,t)|^{q}d\mu(x,t)&=&\int^{\infty}_{0}\mu(E_{\lambda})d\lambda^{q}\\
&\lesssim&\int^{\infty}_{0}\left(C^{\alpha,p}_{\mathbb{R}^{n+1}_+}(E_{\lambda})\right)^{q/p}d\lambda^{q}\\
&\lesssim&\int^{\infty}_{0}\left(\lambda^{-p}\|f\|_{L^{p}(\mathbb R^{n})}^{p}\right)^{q/p-1}\left(C^{\alpha,p}_{\mathbb{R}^{n+1}_+}(E_{\lambda})\right)\lambda^{q-1}d\lambda\\
&\lesssim&\|f\|^{q-p}_{L^{p}(\mathbb R^{n})}\int^{\infty}_{0}C^{\alpha,p}_{\mathbb{R}^{n+1}_+}(E_{\lambda})d\lambda^{p}\\
&\lesssim&\|f\|^{q}_{L^{p}(\mathbb R^{n})}.
\end{eqnarray*}

(ii) If $\lambda^{p/q}\lesssim c(\mu;\lambda)$, then $\mu^{1/q}(K)\lesssim (C^{\alpha,p}_{\mathbb{R}^{n+1}_+}(K))^{1/p}$ for any compact set $K\subset \mathbb R^{n+1}_{+}$. Let $K=B_{r}(x_{0}, t_{0})$. By Theorem \ref{th-4.1},  for $t_{0}\lesssim r$,
$$(\mu(B_{r}(x_{0}, t_{0})))^{1/q}\lesssim (C^{\alpha,p}_{\mathbb{R}^{n+1}_+}(B_{r}(x_{0}, t_{0})))^{1/p}\lesssim r^{n/p}.$$

For the reverse, take a compact set $K$ such that $\mu(K) \geq \lambda$. If $(x,t)\in B_{r}(x_{0}, t_{0})$, $|x-x_{0}|<r/2$ and $r+t_{0}\leq t\leq t_{0}+2r$.  If $|x-x_{0}|<r<t$, we can get $|x-x_{0}|/t<1$ and
\begin{eqnarray*}
p^{\alpha}_{t}(x-x_{0})&=&\frac{t^{\alpha}}{(t^{2}+|x-x_{0}|^{2})^{(n+\alpha)/2}}\\
&=&\frac{1}{t^{n}}\frac{1}{(1+|x-x_{0}|^{2}/t^{2})^{(n+\alpha)/2}}\\
&\geq&\frac{1}{2^{(n+\alpha)/2}}\frac{1}{t^{n}}\\
&\gtrsim&\frac{1}{3^{n}r^{n}}\frac{1}{2^{(n+\alpha)/2}},
\end{eqnarray*}
equivalently, there exists a constant $c$ such that $r\geq c(p_{t}^{\alpha}(x-x_{0}))^{-1/n}$. Hence if $(x,t)\in B_{r}(t_{0}, x_{0})$, then
$r\in (c(p_{t}^{\alpha}(x-x_{0}))^{-1/n}, \infty)$. For this case, the characteristic functions
$$1_{B_{r}(x_{0},t_{0})}(x,t)=1_{(c(p_{t}^{\alpha}(x-x_{0}))^{-1/n}, \infty)}(r)=1.$$
By Fubini's theorem, we can get
\begin{eqnarray*}
\int^{\infty}_{0}\mu\mid_{K}(B_{r}(x_{0},t_{0}))\frac{dr}{r^{n+1}}
&=&\int^{\infty}_{0}\left\{\int_{\mathbb R^{n+1}_{+}}1_{B_{r}(x_{0},t_{0})}(x,t)d\mu\mid_{K}(x,t)\right\}\frac{dr}{r^{n+1}}\\
&=&\int_{\mathbb R^{n+1}_{+}}\left\{\int^{\infty}_{0}1_{(c(p_{t}^{\alpha}(x-x_{0}))^{-1/n}, \infty)}(r)\frac{dr}{r^{n+1}}\right\}d\mu\mid_{K}(x,t)\\
&=&\int_{\mathbb R^{n+1}_{+}}\left\{\int^{\infty}_{(c(p_{t}^{\alpha}(x-x_{0}))^{-1/n}}\frac{dr}{r^{n+1}}\right\}d\mu\mid_{K}(x,t)\\
&\gtrsim&\int_{\mathbb R^{n+1}_{+}}p^{\alpha}_{t}(x_{0}-x)d\mu\mid_{K}(x,t)\\
&=&P^{\ast}_{\alpha}\mu\mid_{K}(x_{0}).
\end{eqnarray*}
By Minkowski's inequality, we have
$$\|P^{\ast}_{\alpha}\mu\mid_{K}\|_{L^{p'}(\mathbb R^{n})}\lesssim \int^{\infty}_{0}\|\mu\mid_{K}(B_{r_{0}}(\cdot,t_{0}))\|_{L^{p'}(\mathbb R^{n})}\frac{dr}{r^{n+1}}.$$

Take $\delta=(\mu(K))^{p/nq}$. On the one hand,
\begin{eqnarray*}
\|\mu\mid_{K}(B_{r}(\cdot,t_{0}))\|_{L^{p'}(\mathbb R^{n})}^{p'}
&=&\int_{\mathbb R^{n}}\left(\mu(K\cap B_{r}(x_{0}, t_{0}))\right)^{p'}dx_{0}\\
&\lesssim&(\mu(K))^{p'-1}\int_{\mathbb R^{n}}\mu(K\cap B_{r}(x_{0}, t_{0}))dx_{0}\\
&=&(\mu(K))^{p'-1}\int_{\mathbb R^{n}}\left(\int_{K\cap B_{r}(x_{0}, t_{0})}1d\mu(t,y)\right)dx_{0}\\
&\lesssim&(\mu(K))^{p'-1}\int_{K}\left(\int_{|y-x_{0}|<r}dx_{0}\right)d\mu(t,y)\\
&\lesssim&(\mu(K))^{p'}r^{n}.
\end{eqnarray*}
The above estimate gives
\begin{eqnarray*}
\int^{\infty}_{\delta}\|\mu\mid_{K}(B_{r}(\cdot,t_{0}))\|_{L^{p'}(\mathbb R^{n})}\frac{dr}{r^{n+1}}&\lesssim&\int^{\infty}_{\delta}\mu(K)r^{n/p'-n-1}dr
\lesssim \mu(K)\delta^{-n/p}.
\end{eqnarray*}
Meanwhile, on the other hand, since $\mu(B_{r}(x_{0}, t_{0}))\lesssim r^{nq/p}$, we can obtain
\begin{eqnarray*}
\|\mu\mid_{K}(B_{r}(\cdot,t_{0}))\|_{L^{p'}(\mathbb R^{n})}^{p'} &\lesssim& r^{nq(p'-1)/p}\int_{\mathbb R^{n}}\mu\mid_{K}(B_{r}(t_{0},x_{0}))dx_{0}\\
&=&r^{nq(p'-1)/p}\int_{\mathbb R^{n}}\left(\int_{K\cap B_{r}(x_{0}, t_{0})}1d\mu(t,y)\right)dx_{0}\\
&\lesssim&r^{nq(p'-1)/p}\int_{K}\left(\int_{|y-x_{0}|<r}dx_{0}\right)d\mu(t,y)\\
&\lesssim& \mu(K)r^{n+nq/(p^{2}-p)}.
\end{eqnarray*}
Hence, noting that $q>p$, we obtain
$$\int^{\delta}_{0}\|\mu\mid_{K}(B_{r}(\cdot,t_{0}))\|_{L^{p'}(\mathbb R^{n})}\frac{dr}{r^{1+n}}\lesssim (\mu(K))^{1/p'}\delta^{n(q-p)/p^{2}}.$$
Finally, we get
\begin{eqnarray}\label{eq-3.3}
\|P^{\ast}_{\alpha}\mu\mid_{K}\|_{L^{p'}(\mathbb R^{n})}&\lesssim&\int^{\infty}_{0}\|\mu\mid_{K}(B_{r}(\cdot,t_{0}))\|_{L^{p'}(\mathbb R^{n})}\frac{dr}{r^{n+1}}\\
&=&\left(\int^{\delta}_{0}+\int_{\delta}^{\infty}\right)\|\mu\mid_{K}(B_{r}(\cdot,t_{0}))\|_{L^{p'}(\mathbb R^{n})}\frac{dr}{r^{n+1}}\nonumber\\
&\lesssim&(\mu(K))^{1/p'}\delta^{n(q-p)/p^{2}}+(\mu(K))\delta^{-n/p}\nonumber\\
&\lesssim&(\mu(K))^{1/q'}.\nonumber
\end{eqnarray}
Below we prove that
$(\mu(K))^{1/q}\leq (C^{\alpha,p}_{\mathbb{R}^{n+1}_+}(K))^{1/p}$ for any compact set $K\subset \mathbb R^{n+1}_{+}$.
In fact, let $f\in C^{\infty}_{0}(\mathbb R^{n})$ and $E_{\lambda,K}=\{(x,t)\in K:\ |P_{\alpha}\ast f(x,t)|\geq \lambda\}$. Then the set $E_{\lambda,K}$ is compact.
It follows from (\ref{eq-3.3}) that
\begin{eqnarray}\label{eq-3.4}
\lambda\mu(E_{\lambda,K})&\leq&\int_{\mathbb R^{n}}|P_{\alpha}f(x,t)|d\mu\mid_{E_{\lambda,K}}\\
&\leq&\|f\|_{L^{p}(\mathbb{R}^n)}\|P^{\ast}_{\alpha}\mu\mid_{E_{\lambda,K}}\|_{L^{p'}}\nonumber\\
&\lesssim&\|f\|_{L^{p}(\mathbb{R}^n)}(\mu(E_{\lambda,K}))^{1/q'}.\nonumber
\end{eqnarray}
For an arbitrary $f\in L^{p}(\mathbb R^{n})$, via approximating $f$ by a sequence from  $C^{\infty}_{0}(\mathbb R^{n})$ in the $L^{p}$-norm, we can prove that (\ref{eq-3.4}) holds for $f$. For $f\in L^{p}(\mathbb R^{n})$ such that $P_{\alpha}f\geq 1$ on $K,$ we have $E_{\lambda,K}=K.$ Then  (\ref{eq-3.4}) implies $(\mu(K))^{1/q}\lesssim \|f\|_{L^{p}(\mathbb{R}^n)}$, which gives $(\mu(K))^{1/q}\leq (C^{\alpha,p}_{\mathbb{R}^{n+1}_+}(K))^{1/p}.$ Recall that $\mu(K)\geq \lambda$. Then taking the infimum over the compact sets $K$ such that $\mu(K) \geq \lambda$, we get
$\lambda^{p/q}\lesssim C^{\alpha,p}_{\mathbb{R}^{n+1}_+}(K)$, i.e.,
$$\lambda^{p/q}\lesssim \inf\left\{C^{\alpha,p}_{\mathbb{R}^{n+1}_+}(K):\ \text{ compact }K\subset \mathbb R^{n+1}_{+},\quad  \mu(K)\geq \lambda\right\}=c_{\alpha,p}(\mu; \lambda).$$
\end{proof}

\subsection{Embeddings  of $L^p(\mathbb{R}^{n})$ to $L^q(\mathbb{R}^{n+1}_+,\mu)$  when  $p>q$ }
Let $M\mu(x)=\sup_{r>0}r^{-n}\mu\left({B_{r}(x, r)}\right)$ be the parabolic maximal function of a nonnegative Radon measure $\mu$ on $\mathbb R^{n+1}_+$. We show the embedding for $p>q$ inspired by some  ideas from  \cite{Shi Xiao}, which needs the following $L^p$-boundedness of $M\mu$.

 \begin{lemma}\label{l21}  Let $1<p<\infty$. Then
 $\|M\mu\|_{L^{p}(\mathbb{R}^{n})}\approx \|P^\ast_\alpha\mu\|_{L^p(\mathbb R^n)}.$
 \end{lemma}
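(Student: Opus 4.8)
The plan is to prove the two-sided estimate $\|M\mu\|_{L^p(\mathbb R^n)}\approx\|P^\ast_\alpha\mu\|_{L^p(\mathbb R^n)}$ by establishing a pointwise comparison between the two functions, thereby reducing the $L^p$-statement to a pointwise one. First I would observe that for a fixed $x\in\mathbb R^n$ and a fixed $r>0$, the set $B_r(x,r)=\{(z,t):\ |z-x|<r/2,\ 2r<t<3r\}$ (by the definition used above with $t_0=r$) sits in a region where the Poisson kernel $p^\alpha_t(x-z)$ is comparable to $r^{-n}$: indeed, if $(z,t)\in B_r(x,r)$ then $t\approx r$ and $|x-z|<r/2<t$, so
$$p^\alpha_t(x-z)=\frac{t^\alpha}{(t^2+|x-z|^2)^{(n+\alpha)/2}}=\frac1{t^n}\,\frac1{(1+|x-z|^2/t^2)^{(n+\alpha)/2}}\approx r^{-n}.$$
Hence $r^{-n}\mu(B_r(x,r))\lesssim\int_{B_r(x,r)}p^\alpha_t(x-z)\,d\mu(z,t)\le P^\ast_\alpha\mu(x)$, and taking the supremum over $r>0$ gives the pointwise bound $M\mu(x)\lesssim P^\ast_\alpha\mu(x)$; integrating in $x$ yields $\|M\mu\|_{L^p}\lesssim\|P^\ast_\alpha\mu\|_{L^p}$ for every $p$.

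For the reverse inequality I would dominate $P^\ast_\alpha\mu(x)$ by $M\mu(x)$ pointwise as well. Writing $P^\ast_\alpha\mu(x)=\int_{\mathbb R^{n+1}_+}p^\alpha_t(x-z)\,d\mu(z,t)$ and decomposing $\mathbb R^{n+1}_+$ dyadically according to the size of $\max\{t,|x-z|\}$, on the region where $\max\{t,|x-z|\}\approx 2^k$ one has $p^\alpha_t(x-z)\lesssim 2^{-k(n+\alpha)}t^\alpha\lesssim 2^{-kn}$ (using $t\lesssim 2^k$), while that region is contained in a set of the form $B_{c2^k}(x,c2^k)$ up to fixed dilation constants, so its $\mu$-mass is $\lesssim (2^k)^n M\mu(x)$; in fact a cleaner route is the standard "approximation of the identity" argument: since $p^\alpha_t(y)=t^{-n}\psi(y/t)$ with $\psi$ radial, decreasing and integrable, the general principle (as in \cite[p.~57]{Stein}, already invoked in Lemma \ref{lemma 2}(iii)) gives $\int_{\mathbb R^n}p^\alpha_t(x-z)\,d\nu_t(z)$-type bounds by the Hardy--Littlewood maximal function; carrying the $t$-variable along and summing the dyadic contributions in $t$ converges because $\psi$ decays like $|y|^{-n-\alpha}$ with $\alpha>0$, yielding $P^\ast_\alpha\mu(x)\lesssim M\mu(x)$. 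Integrating gives $\|P^\ast_\alpha\mu\|_{L^p}\lesssim\|M\mu\|_{L^p}$.

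The main obstacle is the reverse pointwise bound $P^\ast_\alpha\mu(x)\lesssim M\mu(x)$: unlike the classical $t$-fixed statement, here we integrate the kernel against $\mu$ over the whole half-space, so one must carefully organize the double decomposition in $t$ and in $|x-z|$ and check that the geometric series in the scale parameter converges — this is exactly where $\alpha>0$ (equivalently, the $(n+\alpha)$-homogeneity of the kernel's decay) is used. Once the pointwise equivalence $M\mu(x)\approx P^\ast_\alpha\mu(x)$ is in hand for all $x$, the $L^p$-equivalence for $1<p<\infty$ is immediate; I note that the restriction $1<p<\infty$ is not actually needed for this pointwise argument, but it is the range in which the lemma will be applied. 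I would also remark that $M\mu$ and $P^\ast_\alpha\mu$ may be simultaneously infinite, in which case the equivalence is trivially true, so one may harmlessly assume $\|P^\ast_\alpha\mu\|_{L^p}<\infty$ throughout.
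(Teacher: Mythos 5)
Your first direction is fine and agrees with the paper: for $(z,t)\in B_r(x,r)$ one has $t\approx r$ and $|x-z|<r/2<t$, so $p^\alpha_t(x-z)\approx r^{-n}$ and the pointwise bound $M\mu(x)\lesssim P^\ast_\alpha\mu(x)$ follows.

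Your reverse direction, however, contains a genuine gap: the claimed pointwise inequality $P^\ast_\alpha\mu(x)\lesssim M\mu(x)$ is \emph{false}. The parabolic maximal function $M\mu(x)=\sup_{r>0}r^{-n}\mu(B_r(x,r))$ only detects mass of $\mu$ sitting at heights $t$ comparable to the lateral scale, because $B_r(x,r)=\{(z,t):\,|z-x|<r/2,\ 2r<t<3r\}$. It does not see mass that is laterally far from $x$ but close to the boundary $t=0$. For a concrete counterexample take $\mu=\delta_{(y,\varepsilon)}$ with $|y-x|=1$ and $\varepsilon$ small: then $P^\ast_\alpha\mu(x)=p^\alpha_\varepsilon(x-y)\approx\varepsilon^{\alpha}>0$, while $(y,\varepsilon)\in B_r(x,r)$ would force simultaneously $r>2|y-x|=2$ and $r<\varepsilon/2$, which is impossible, so $M\mu(x)=0$. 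The specific flaw in your dyadic decomposition is the claim that ``the region where $\max\{t,|x-z|\}\approx 2^k$ is contained in $B_{c2^k}(x,c2^k)$'': that region includes points with $t\ll 2^k$ and $|x-z|\approx 2^k$, which lie in \emph{no} $B_{c2^k}(x,c2^k)$. The approximation-of-the-identity principle from Stein that underlies Lemma \ref{lemma 2}(iii) also does not transfer: there the measure lives on $\mathbb{R}^n$ and one takes a supremum in $t$, whereas here $\mu$ lives on $\mathbb{R}^{n+1}_+$ and one integrates over $t$, so the $t$-sum of $L^1$-normalized kernels does not collapse to a single maximal function.

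Because a pointwise reverse bound is impossible, the paper instead proves a good-$\lambda$ inequality relating the distribution functions of $P^\ast_\alpha\mu$ and $M\mu$, and then absorbs the small term by choosing $\theta$ small after multiplying by $\rho^{p-1}$ and integrating. This is precisely where the restriction $1<p<\infty$ enters in an essential way; your remark that the range $1<p<\infty$ is ``not actually needed'' is therefore also incorrect. To repair your argument you would need to replace the pointwise claim by a distributional (good-$\lambda$) estimate, as in the paper, or by some other $L^p$-level device.
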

 \begin{proof}
 It is easy to check that for any $r>0$,
 $$
 P^\ast_\alpha\mu(x)\gtrsim \int_{B_{r}(x, r)}\frac{t^{\alpha}}{(t^{2}+|x-y|^{2})^{{(n+\alpha)}/{2}}}d\mu(y,t)\gtrsim r^{-n}\mu(B_{r}(x, r)),
 $$
 and so that
 $ \|M\mu\|_{L^{p}(\mathbb{R}^{n})}\lesssim \|P^\ast_{\alpha}\mu\|_{L^{p}(\mathbb{R}^{n})}.$
Next, we are about to prove the converse inequality by a slight modification of \cite[Theorem 3.6.1]{AH}.
Denote by $E_{\mu}(T,\lambda,r)=\{x\in \mathbb{R}^{n}:T\mu(x)>\lambda r\}$ for an operator $T$ and $(\lambda,r) \in (\mathbb{R}, \mathbb{R})$. Then
by the famous good $\lambda$ inequality, there exist two constants $c_{1}>1$ and $c_{2}>0$ such that
 $$
\left |E_{\mu}(P^\ast_{\alpha},c_{1},\rho)\right|\leq c_{2}\theta^{{(n+\alpha)}/{n}}\left|E_{\mu}(P^\ast_{\alpha},1,\rho)\right|
 +\left|E_{\mu}(M,\theta,\rho)\right|\,\,\,\,\hbox{for any}\,\, \rho>0\,\,\hbox{and}\,\, 0<\theta\leq 1.
$$
 Hence, for any $\tau>0$, we get
 $$
 \int_{0}^{\tau}\left|E_{\mu}(P^\ast_{\alpha},c_{1},\rho)\right|\rho^{p-1}d\rho \leq c_{2}\theta^{{(n+\alpha)}/{n}}\int_{0}^{\tau}\left|E_{\mu}(P^\ast_{\alpha},1,\rho)\right|\rho^{p-1}d\rho+\int_{0}^{\tau}\left|E_{\mu}(M,\theta,\rho)\right|\rho^{p-1}d\rho.
$$
 Namely,
 $$
 c_{1}^{-p}\int_{0}^{c_{1}\tau}\left|E_{\mu}(P^\ast_{\alpha},1,\rho)\right|\rho^{p-1}d\rho\leq c_{2}\theta^{{(n+\alpha)}/{n}}\int_{0}^{\tau}\left|E_{\mu}(P^\ast_{\alpha},1,\rho)\right|\rho^{p-1}d\rho+\theta^{-p}\int_{0}^{\theta \tau}\left|E_{\mu}(M,1,\rho)\right|\rho^{p-1}d\rho.
$$
  Accordingly,
 $$
 c_{1}^{-p}\int_{\mathbb{R}^{n}}(P^\ast_{\alpha}\mu(x))^{p}dx\leq 2\theta^{-p}\int_{\mathbb{R}^{n}}(M\mu(x))^{p}dx
 $$
 by letting $\theta$ be so small such that $c_{2}\theta^{{(n+\alpha)}/{n}}\leq \frac{1}{2}c_{1}^{-p}$ and $\tau\to\infty$.
 The foregoing analysis yields
 $$
 \|M\mu\|_{L^{p}(\mathbb{R}^{n})}\gtrsim \|P^\ast_{\alpha}\mu\|_{L^{p}(\mathbb{R}^{n})},
 $$
 which is the desired one.
 \end{proof}

\begin{lemma}\label{l22}
Denote by
$$ H_{p}\mu(x,t):=\int_{0}^{\infty}\left(r^{-n}\mu(B_{r}(x, t))\right)^{p'-1}r^{-1}dr$$
the Hedberg-Wolff  potential of $\mu.$ Let $1<p<\infty$ and $\mu\in \mathcal M_{+}(\mathbb R^{n+1}_{+}).$ Then one has
$$\|P_{\alpha}^{*}\mu\|_{L^{p'}(\mathbb{R}^{n})}^{p'}\approx \int_{\mathbb{R}_{+}^{n+1}}H_{p}\mu\, d\mu.$$
\end{lemma}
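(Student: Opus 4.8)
The plan is to prove the two inequalities hidden in the sign ``$\approx$'' separately, using the self-adjointness of $P_\alpha$ together with a dyadic decomposition of $\mathbb R^{n+1}_+$, in the spirit of Wolff's inequality (see, e.g., the treatment in \cite{AH}) and of its analogues for the fractional heat semigroup in \cite{Shi Xiao} and \cite{Chang Xiao}. A useful preliminary reduction is provided by Lemma~\ref{l21}: since $\|P^{\ast}_\alpha\mu\|_{L^{p'}(\mathbb R^n)}\approx\|M\mu\|_{L^{p'}(\mathbb R^n)}$, it suffices to prove $\|M\mu\|_{L^{p'}(\mathbb R^n)}^{p'}\approx\int_{\mathbb R^{n+1}_+}H_p\mu\,d\mu$; nonetheless the lower bound reads most cleanly if one argues directly with $P^{\ast}_\alpha\mu$.

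For the lower bound $\int_{\mathbb R^{n+1}_+}H_p\mu\,d\mu\lesssim\|P^{\ast}_\alpha\mu\|_{L^{p'}}^{p'}$, I would first record, via Fubini's theorem and the usual truncation/monotone-convergence device, the identity
$$\|P^{\ast}_\alpha\mu\|_{L^{p'}(\mathbb R^n)}^{p'}=\int_{\mathbb R^n}(P^{\ast}_\alpha\mu)(x)\,\big(P^{\ast}_\alpha\mu(x)\big)^{p'-1}\,dx=\int_{\mathbb R^{n+1}_+}P_\alpha\!\big((P^{\ast}_\alpha\mu)^{p'-1}\big)(z,t)\,d\mu(z,t),$$
and then establish the pointwise estimate $P_\alpha\big((P^{\ast}_\alpha\mu)^{p'-1}\big)(z,t)\gtrsim H_p\mu(z,t)$. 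For this, decompose $\int_{\mathbb R^n}p^{\alpha}_t(z-y)\,\big(P^{\ast}_\alpha\mu(y)\big)^{p'-1}\,dy$ over the dyadic regions where $|z-y|+t\approx 2^{k}$: on each of these $p^{\alpha}_t(z-y)$ is bounded below, while for $y$ in such a region $P^{\ast}_\alpha\mu(y)\gtrsim 2^{-kn}\mu\big(B_{2^{k}}(z,t)\big)$, because $p^{\alpha}_s(y-w)\gtrsim 2^{-kn}$ whenever $(w,s)\in B_{2^{k}}(z,t)$ — this is the same elementary lower bound on $p^{\alpha}_s$ that underlies the proof of Theorem~\ref{th-4.1}. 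Substituting and summing the resulting geometric series over $k$ (the exponents match because $(p-1)(p'-1)=1$) reproduces $H_p\mu(z,t)$, in analogy with the classical identity $I_\alpha\big((I_\alpha\mu)^{p'-1}\big)\gtrsim W^{\mu}_{\alpha,p}$.

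The upper bound $\|P^{\ast}_\alpha\mu\|_{L^{p'}}^{p'}\lesssim\int_{\mathbb R^{n+1}_+}H_p\mu\,d\mu$ is the substantive part and the main obstacle: there is no serviceable pointwise inequality here, so one must exploit the nonlinearity of the problem. Having reduced, via Lemma~\ref{l21}, to estimating $\|M\mu\|_{L^{p'}}^{p'}$, I would pass to a dyadic model. To each dyadic cube $Q\subset\mathbb R^n$ attach its Carleson box $\widehat Q\subset\mathbb R^{n+1}_+$ and set $a_Q=|Q|^{-1}\mu(\widehat Q)$; then, after the usual reduction to finitely many shifted/dilated dyadic grids, $M\mu(x)\lesssim\phi(x):=\sum_{Q\ni x}a_Q$, and the matter is reduced to the dyadic Wolff inequality
$$\int_{\mathbb R^n}\phi^{p'}\,dx\ \lesssim\ \sum_{Q}a_Q\,A_Q^{p'-1}\,|Q|,\qquad A_Q:=\sum_{Q'\supseteq Q}a_{Q'}.$$
This follows by telescoping $\phi^{p'}$ along the tower of dyadic ancestors of each point: with $T^{j}(x):=\sum_{i\ge j}a_{Q^{i}(x)}$ denoting the partial sum over cubes of side $\ge 2^{j}$ containing $x$ (so $T^{j}(x)$ decreases to $0$ as $j\to+\infty$ and increases to $\phi(x)$ as $j\to-\infty$), one has $\phi^{p'}(x)=\sum_{j}\big(T^{j}(x)^{p'}-T^{j+1}(x)^{p'}\big)\le p'\sum_{j}T^{j}(x)^{p'-1}\,a_{Q^{j}(x)}$ by the mean value theorem; integrating and interchanging the order of summation in $j$ and in $Q$ gives the displayed estimate, whose right-hand side, after undoing the discretization, is comparable to $\int H_p\mu\,d\mu$. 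Combining the two directions completes the proof of Lemma~\ref{l22}. The points that must be checked with care are exactly those of this last step: that the Carleson boxes $\widehat Q$ dominate the parabolic maximal function $M\mu$, and that $\sum_Q a_Q A_Q^{p'-1}|Q|$ is genuinely equivalent to $\int H_p\mu\,d\mu$ through the boxes $B_r(x,t)$ that define $H_p\mu$.
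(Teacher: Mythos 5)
Your lower bound is essentially the paper's: both pass through the Fubini identity $\|P^\ast_\alpha\mu\|_{L^{p'}}^{p'}=\int_{\mathbb R^{n+1}_+}P_\alpha\bigl((P^\ast_\alpha\mu)^{p'-1}\bigr)\,d\mu$ and then prove the pointwise inequality $P_\alpha\bigl((P^\ast_\alpha\mu)^{p'-1}\bigr)(y,t)\gtrsim H_p\mu(y,t)$ by a dyadic decomposition around $(y,t)$ together with the elementary kernel lower bound $p^\alpha_s\gtrsim s^{-n}$ on a comparable box. For the upper bound you take a genuinely different route. The paper stays entirely in the continuous setting: after reducing to $\|M\mu\|_{L^{p'}}$ via Lemma~\ref{l21}, it records the pointwise estimate $M\mu(x)^{p'}\lesssim\int_0^\infty\bigl(s^{-n}\mu(B_s(x,s))\bigr)^{p'}\,ds/s$, integrates over $x$, and then applies Fubini together with the power-lowering device $\mu(B_r(x,r))^{p'}=\mu(B_r(x,r))^{p'-1}\int_{B_r(x,r)}d\mu(y,u)$; the inner $x$-integration contributes a factor $\approx r^n$ and an enlarged box, so the result is $\int_{\mathbb R^{n+1}_+}H_p\mu\,d\mu$ in one Fubini pass, with no dyadic cubes. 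You instead run the classical discrete Wolff argument: Carleson boxes $\widehat Q$ over shifted dyadic grids, $M\mu\lesssim\phi=\sum_{Q\ni x}a_Q$, and a telescoping proof of $\int\phi^{p'}\lesssim\sum_Qa_QA_Q^{p'-1}|Q|$. Both arguments ultimately rest on the same geometric nesting of the parabolic boxes $B_r(x,t)$ and both are sound; the paper's version is shorter and avoids the shifted-grid covering and the undiscretization step you rightly flag as the delicate points, while yours is the more modular and recognizable Wolff-inequality route. (Incidentally, the telescoping device you invoke does appear in this paper, but in the proof of the capacitary strong inequality, Lemma~\ref{le-4.2}, rather than here.)
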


\begin{proof}
We conclude from the fact
$$
\frac{\mu(B_{r}(x, r))}{r^{n}}\lesssim \left(\int_{r}^{2r}\left(\frac{\mu(B_{s}(x, s))}{s^{n}}\right)^{p'}\frac{ds}{s}\right)^{{1}/{p'}}
$$
that
$$
M\mu(x)\lesssim \left(\int_{0}^{\infty}\left(\frac{\mu(B_{s}(x, s))}{s^{n}}\right)^{p'}\frac{ds}{s}\right)^{{1}/{p'}}.
$$

Since
$$
\int_{\mathbb{R}^{n}}\int_{0}^{\infty}\left(\frac{\mu(B_{r}(x, r))}{r^{n}}\right)^{p'}\frac{drdx}{r}=\int_{0}^{\infty}\int_{\mathbb{R}^{n}}\frac{\mu(B_{r}(x, r))^{p'}}{r^{np'+1}}dxdr,$$
by the Fubini theorem,
$$
\int_{\mathbb{R}^{n}}\mu(B_{r}(x, r))^{p'}dx \lesssim\int_{B_{r}(y, r)}\int_{\mathbb{R}^{n}}\mu(B_{r}(x, r))^{p'-1}dxd\mu\lesssim r^{n}\int_{B_{\frac{3r}{2}}(y, \frac{r}{2})}\mu(B_{r}(y, r))^{p'-1}d\mu.
$$
Accordingly,
$$
\int_{0}^{\infty}\int_{\mathbb{R}^{n}}\frac{\mu(B_{r}(x, r))^{p'}}{r^{np'+1}}dxdr
\lesssim \int_{\mathbb{R}_{+}^{1+n}}\left(\int_{0}^{\infty}\left(\frac{\mu(B_{r}(x, r))}{r^{n}}\right)^{p'-1}\frac{dr}{r}\right)d\mu,
$$
which, together with Lemma \ref{l21}, gives
$$\|P^\ast_{\alpha}\mu\|_{L^{p'}(\mathbb{R}^{n})}^{p'}\lesssim \int_{\mathbb{R}_{+}^{n+1}}H_{p}\mu\,d\mu.$$

Write
$$B(y,2^{-m})=\left\{x\in\mathbb R^n:\ |x-y|<2^{-m}\ \&\ 2^{-m}<t<2^{1-m}\right\}\quad \forall\,\, m\in\mathbb Z\equiv\left\{0,\pm 1,\pm 2,\ldots\right\}.$$
The converse inequality is a by-product of the following estimate
\begin{align*}
\int_{\mathbb{R}^{n}}(P^\ast_{\alpha}\mu(x))^{p'-1}p_{t}^{\alpha}(x-y)dx
&\quad\approx\int_{\mathbb{R}^{n}}\frac{t^{\alpha}}{(t^{2}+|x-y|^{2})^{{(n+\alpha)}/ {2}}}
\left(\int_{\mathbb{R}_{+}^{n+1}}\frac{s^{\alpha}}{(s^{2}+|x-z|^{2})^{{(n+\alpha)}/{2}}}d\mu\right)^{p'-1}dx\\
&\quad\gtrsim \sum_{m\in \mathbb{Z}}\int_{B(y,2^{-m})}t^{-n}
\left(\int_{B_{2^{-m}}(y, t)}s^{-n}d\mu\right)^{p'-1}dx\\
&\quad\gtrsim \sum_{m\in \mathbb{Z}}\int_{B(y,2^{-m})}{2^{mn}}
\left(\frac{\mu(B_{2^{-m}}(y, t))}{2^{-mn}}\right)^{p'-1}dx\\
&\quad\gtrsim \int_{0}^{\infty}
\left(\frac{\mu(B_{r}(y, t))}{r^{n}}\right)^{p'-1}\frac{dr}{r}
\end{align*}
since
$$
\|P^\ast_{\alpha}\mu\|_{L^{p'}(\mathbb{R}^{n})}^{p'}=\int_{\mathbb{R}^{n}}\left(P^\ast_{\alpha}\mu(x)\right)^{p'-1}(P^\ast_{\alpha}\mu(x))dx=\int_{\mathbb{R}_{+}^{n+1}}\int_{\mathbb{R}^{n}}(P^\ast_{\alpha}\mu(x))^{p'-1}p_{t}^{\alpha}(x-y)dx\,d\mu(y,t).
$$

Therefore,
 $$\|P^\ast_{\alpha}\mu\|_{L^{p'}(\mathbb{R}^{n})}^{p'}\gtrsim \int_{\mathbb{R}_{+}^{n+1}}H_{p}\mu\,d\mu
 $$
as desired.
\end{proof}

Now, we are ready to characterize the embedding \eqref{5} for $1<q<p<\infty$ as follows.
\begin{theorem}\label{Them 3.4}
Let $1<q<p<\infty$ and $\mu\in\mathcal M_{+}(\mathbb R^{n+1}_{+}).$ The following statements are true.
    \item{\rm (i)} The embedding \eqref{5} holds if and only if
$$\int^{\infty}_{0}\left(\frac{\lambda^{p/q}}{c(\mu; \lambda)}\right)^{q/(p-q)}\frac{d\lambda}{\lambda}<\infty.$$

\item{\rm (ii)} The embedding \eqref{5} holds if and only if
$$\int_{\mathbb R^{n+1}_+}\left\{\int_0^\infty \left(\frac{\mu(B_{r}(x_{0}, t_{0}))}{C^{\alpha,p}_{\mathbb{R}^{n+1}_+}(B_{r}(x_{0}, t_{0}))}\right)^{1/(p-1)}\,\frac{dr}{r}\right\}^{{q(p-1)}/{(p-q)}}\,d\mu(x_0,t_0)<\infty.
$$

\item{\rm (iii)}  The embedding \eqref{5} holds if and only if $H_{p}\mu \in L^{q(p-1)/(p-q)}(\mathbb{R}_{+}^{n+1}, \mu).$

\end{theorem}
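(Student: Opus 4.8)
The plan is to establish the chain $\eqref{5}\Leftrightarrow(\mathrm{i})$, $\eqref{5}\Leftrightarrow(\mathrm{iii})$, and $(\mathrm{iii})\Leftrightarrow(\mathrm{ii})$, which together give all three equivalences. Throughout I use that, since the kernel $p^{\alpha}_{t}$ is nonnegative, one may restrict to nonnegative $f$ and test functions, and that \eqref{5} is then equivalent by duality to the boundedness of the adjoint $P^{\ast}_{\alpha}\colon L^{q'}(\mathbb R^{n+1}_{+},\mu)\to L^{p'}(\mathbb R^{n})$, where $q'>p'$ because $q<p$.

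For $\eqref{5}\Leftrightarrow(\mathrm{i})$ I would run Maz'ya's capacitary scheme. For the ``if'' part, set $E_{s}=\{(x,t):P_{\alpha}f(x,t)>s\}$, so that $\int_{\mathbb R^{n+1}_{+}}|P_{\alpha}f|^{q}\,d\mu=q\int_{0}^{\infty}\mu(E_{s})\,s^{q-1}\,ds$ by the layer--cake formula. Since $C^{\alpha,p}_{\mathbb R^{n+1}_{+}}$ is an outer capacity (Proposition~\ref{prop-2.1}) and $\mu$ is inner regular, $c_{\alpha,p}(\mu;\mu(E_{s}))\le C^{\alpha,p}_{\mathbb R^{n+1}_{+}}(E_{s})$, so $\mu(E_{s})$ is controlled by the capacity profile evaluated at $C^{\alpha,p}_{\mathbb R^{n+1}_{+}}(E_{s})$; inserting this into the layer--cake integral and using the capacitary strong type inequality $\int_{0}^{\infty}C^{\alpha,p}_{\mathbb R^{n+1}_{+}}(E_{s})\,ds^{p}\lesssim\|f\|_{L^{p}}^{p}$ (Lemma~\ref{le-4.2}), the hypothesis $\int_{0}^{\infty}(\lambda^{p/q}/c_{\alpha,p}(\mu;\lambda))^{q/(p-q)}\,d\lambda/\lambda<\infty$ is precisely what a standard Maz'ya-type integral lemma requires to pass from the control of $\int_{0}^{\infty}C^{\alpha,p}_{\mathbb R^{n+1}_{+}}(E_{s})\,ds^{p}$ to that of $\int_{0}^{\infty}\mu(E_{s})\,ds^{q}$. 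For the ``only if'' part one superposes near-extremal functions $f_{K_{j}}$ for compact sets $K_{j}$ whose $\mu$-measures run through dyadic values realizing $c_{\alpha,p}(\mu;\cdot)$: with $f=\sum_{j}a_{j}f_{K_{j}}$ the positivity of $P_{\alpha}$ gives $P_{\alpha}f\ge a_{j}$ on $K_{j}$, which bounds $\int(P_{\alpha}f)^{q}\,d\mu$ from below by a constant times $\sum_{j}a_{j}^{q}\mu(K_{j})$ after disjointification, while $\|f\|_{L^{p}}$ is estimated through the extremal structure of Proposition~\ref{prop-4.1}; optimizing over $\{a_{j}\}$ produces the integral condition.

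For $\eqref{5}\Leftrightarrow(\mathrm{iii})$ I would use the duality above together with Lemmas~\ref{l21} and~\ref{l22}. Testing $\|P^{\ast}_{\alpha}(g\,d\mu)\|_{L^{p'}}\lesssim\|g\|_{L^{q'}(\mu)}$ against functions $g$ adapted to the super-level sets of $H_{p}\mu$, and invoking $\|P^{\ast}_{\alpha}\nu\|_{L^{p'}}^{p'}\approx\int H_{p}\nu\,d\nu$ (Lemma~\ref{l22}) with $\nu=g\,d\mu$, gives the ``only if'' part; for the ``if'' part one needs the nonlinear Wolff-type inequality bounding $\int H_{p}(g\,d\mu)\,g\,d\mu$ by $\|g\|_{L^{q'}(\mu)}^{p'}$ under the assumption $H_{p}\mu\in L^{q(p-1)/(p-q)}(\mu)$, the exponent being exactly $q(p-1)/(p-q)=q'/(q'-p')=(q'/p')'$, i.e.\ the conjugate of $q'/p'$ dictated by the two-parameter trace inequality with $q'>p'$.

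Finally, $(\mathrm{iii})\Leftrightarrow(\mathrm{ii})$ follows from the ball-capacity estimates of Theorem~\ref{th-4.1}: since $1/(p-1)=p'-1$, the inner integral in $(\mathrm{ii})$ equals $\int_{0}^{\infty}\big(\mu(B_{r}(x_{0},t_{0}))/C^{\alpha,p}_{\mathbb R^{n+1}_{+}}(B_{r}(x_{0},t_{0}))\big)^{p'-1}\,dr/r$, which is comparable to $H_{p}\mu(x_{0},t_{0})$; one direction is immediate from $C^{\alpha,p}_{\mathbb R^{n+1}_{+}}(B_{r})\gtrsim r^{n}$, and the reverse uses the sharpened evaluation $C^{\alpha,p}_{\mathbb R^{n+1}_{+}}(B_{r}(x_{0},t_{0}))\approx(t_{0}+r)^{n}$ (lower bound from $P_{\alpha}f(x,t)\le\|f\|_{L^{p}}\|p^{\alpha}_{t}\|_{L^{p'}}\approx\|f\|_{L^{p}}t^{-n/p}$ with $t\approx t_{0}+r$ on $B_{r}(x_{0},t_{0})$, upper bound from testing with a normalized indicator of $B(x_{0},c(t_{0}+r))$), together with the fact that for $r\ll t_{0}$ the shell $B_{r}(x_{0},t_{0})$ lives at height $\approx t_{0}$, so that range of scales contributes at most a bounded multiple of the $r\approx t_{0}$ term. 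I expect the hardest points to be the nonlinear Wolff inequality underlying $(\mathrm{iii})\Rightarrow\eqref{5}$, and, in the direct proof of $\eqref{5}\Rightarrow(\mathrm{i})$, the $L^{p}$-estimate of the superposition $\sum_{j}a_{j}f_{K_{j}}$ for $p>1$, where Minkowski's inequality is too lossy and one must exploit the extremal identity $(f_{K})^{p}=(P^{\ast}_{\alpha}\mu_{K})^{p'}$; the bookkeeping at the scales $r\ll t_{0}$ in the last step is a further technical nuisance.
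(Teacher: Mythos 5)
Your high-level outline matches the paper's: (i) via the Maz'ya capacitary scheme together with the strong-type inequality of Lemma~\ref{le-4.2}, (iii) via duality, the Wolff-potential identity of Lemma~\ref{l22}, the nonlinear bound $H_p(f\,d\mu)\lesssim(M_\mu f)^{p'-1}H_p\mu$, and the $L^{q'}_\mu$-boundedness of $M_\mu$, and (ii) via a ball-capacity comparison with $H_p\mu$. Two concrete issues.

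For the ``only if'' step of (i), you set $f=\sum_j a_j f_{K_j}$ and rightly anticipate that Minkowski's inequality is too lossy for $\|f\|_{L^p}$; but the fix you propose (the extremal identity $(f_K)^p=(P^\ast_\alpha\mu_K)^{p'}$) is not needed, because the paper avoids the whole difficulty by taking a \emph{supremum}: $f_{i,k}=\sup_{i\le j\le k}\bigl(2^j/c_{\alpha,p}(\mu;2^j)\bigr)^{1/(p-q)}f_j$. Since each $f_j\ge 0$, the pointwise inequality $(\sup_j g_j)^p\le\sum_j g_j^p$ gives $\|f_{i,k}\|_{L^p}^p\le\sum_j a_j^p\|f_j\|_{L^p}^p$ at once; positivity of the kernel still yields $P_\alpha f_{i,k}\ge a_j$ on $K_j$, and the lower bound on $\int|P_\alpha f_{i,k}|^q\,d\mu$ is extracted from the decreasing-rearrangement form of the layer-cake formula rather than by disjointification. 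You should switch to the supremum.

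For (ii)$\Leftrightarrow$(iii), the claim that the scales $r\ll t_0$ ``contribute at most a bounded multiple of the $r\approx t_0$ term'' is not justified and is false in general. Your sharpening $C^{\alpha,p}_{\mathbb R^{n+1}_+}(B_r(x_0,t_0))\approx(t_0+r)^n$ is correct (and genuinely better than the two-sided bound in Theorem~\ref{th-4.1}), but precisely because $C^{\alpha,p}(B_r)\approx t_0^n\gg r^n$ when $r\ll t_0$, the inner integral in (ii) is pointwise dominated by $H_p\mu$ yet can be strictly smaller: if $\mu(B_r(x_0,t_0))$ decays slowly in $r$ (e.g.\ like $r$, as for a measure carried by a vertical segment), $\int_0^{t_0}(r^{-n}\mu(B_r))^{p'-1}\,dr/r$ diverges while the (ii)-integral stays finite. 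So the comparison gives only (iii)$\Rightarrow$(ii); the converse cannot be obtained by pointwise comparability of the two potentials. The paper handles (ii) with the single sentence ``it is easy to see that (ii) is a consequence of (iii)'' and gives no detail; to complete your proposal you would have to run the duality/Wolff argument directly with the (ii)-potential in place of $H_p\mu$, rather than claim the two potentials are comparable.
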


\begin{proof}
It is easy to see that the  statement (ii) is a consequence of the statement (iii). We only need to show statements (i)\ $\&$\ (iii). The rest of the proof is divided into two parts.

{\it Part I: Proof of statement (i)}.
  Let $P_{\alpha}: L^{p}(\mathbb R^{n})\rightarrow L^{q}(\mathbb R^{n+1}_{+},\mu)$ be bounded. Then
$$\left(\int_{\mathbb R^{n+1}_{+}}|P_{\alpha}f(x,t)|^{q}d\mu\right)^{1/q}\lesssim \|f\|_{L^{p}(\mathbb R^{n})}.$$
By Markov's inequality, we can get
$$\sup_{\lambda>0}\lambda\left(\mu(E_{\lambda}(f))\right)^{1/q}\lesssim \|f\|_{L^{p}(\mathbb R^{n})},$$
where $E_\lambda(f)=\{(x,t)\in 
\mathbb{R}^{n+1}_+: |P_{\alpha}f(x,t)|\geq \lambda\}.$
By the layer cake representation, we get
$$\left(\int^{\lambda}_{0}\mu(E_{\tau}(f))\tau^{q-1}d\tau\right)^{1/q}\lesssim\|f\|_{L^{p}(\mathbb R^{n})}\quad \forall\ \lambda>0.$$
Then we have
$$(\mu(E_{\lambda}(f)))^{1/q}\left(\int^{\lambda}_{0}\tau^{q-1}d\tau\right)^{1/q}\lesssim\|f\|_{L^{p}(\mathbb R^{n})}.$$
 For each integer $j$, there is a compact set $K_{j}\subset \mathbb R^{n+1}_{+}$ and a nonnegative function $f_{j}\in L^{p}(\mathbb R^{n})$ such that
$$\begin{cases}
C^{\alpha,p}_{\mathbb{R}^{n+1}_+}(K_{j})\leq 2c_{\alpha,p}(\mu; 2^{j});\\
\mu(K_{j})>2^{j};\\
P_{\alpha}f_{j}\geq 1_{K_{j}};\\
\|f_{j}\|^{p}_{L^{p}(\mathbb R^{n})}\leq 2C^{\alpha,p}_{\mathbb{R}^{n+1}_+}(K_{j}).
\end{cases}$$
For the integers $i,k$ with $i<k$, let
$f_{i,k}:=\sup\limits_{i\leq j\leq k}\left(\frac{2^{j}}{c_{\alpha,p}(\mu; 2^{j})}\right)^{1/(p-q)}f_{j}.$
Then
\begin{eqnarray}\label{eq-3.5}
\|f_{i,k}\|^{p}_{L^{p}(\mathbb R^{n})}&\leq&\sum^{k}_{j=i}\left(\frac{2^{j}}{c_{\alpha,p}(\mu; 2^{j})}\right)^{{p}/{(p-q)}}\|f_{j}\|^{p}_{L^{p}(\mathbb R^{n})}\\
&\lesssim&\sum^{k}_{j=i}\left(\frac{2^{j}}{c_{\alpha,p}(\mu; 2^{j})}\right)^{{p}/{(p-q)}}C^{\alpha,p}_{\mathbb{R}^{n+1}_+}(K_{j})\nonumber\\
&\lesssim&\sum^{k}_{j=i}\left(\frac{2^{j}}{c_{\alpha,p}(\mu; 2^{j})}\right)^{{p}/{(p-q)}}c_{\alpha,p}(\mu; 2^{j}).\nonumber
\end{eqnarray}
Note that if $(x,t)\in K_{j}$, then
\begin{eqnarray}\label{eq-3.2}
P_{\alpha}f_{i,k}(x,t)&=&P_{\alpha}\left(\sup_{i\leq l\leq k}\left(\frac{2^{j}}{c_{\alpha,p}(\mu; 2^{l})}\right)^{1/(p-q)}f_{l}(x,t)\right)\\
&\geq&\left(\frac{2^{j}}{c_{\alpha,p}(\mu; 2^{j})}\right)^{1/(p-q)}P_{\alpha}f_{j}(x,t)\nonumber\\
&\geq&\left(\frac{2^{j}}{c_{\alpha,p}(\mu; 2^{j})}\right)^{1/(p-q)}.\nonumber
\end{eqnarray}
It can be seen from (\ref{eq-3.2}) that
$$K_{j}\subset \left\{(x,t)\in\mathbb R^{n+1}_{+}:\ P_{\alpha}f_{i,k}(x,t)\geq\left(\frac{2^{j}}{c_{\alpha,p}(\mu; 2^{j})}\right)^{1/(p-q)}\right\}.$$
This means that
\begin{equation}\label{eq-3.6}
2^{j}<\mu(K_{j})\leq\mu\left(E_{({2^{j}}/{c_{\alpha,p}(\mu; 2^{j})})^{1/(p-q)}}(f_{i,k})\right).
\end{equation}
In view of the embedding (\ref{5}), we can use the (nonsymmetric) decreasing rearrangement (\cite[Theorem 1.8]{Bennett}), and then a simple computation to obtain
\begin{eqnarray*}
\|f_{i,k}\|_{L^{p}(\mathbb R^{n})}^{q}&\gtrsim&\int_{\mathbb R^{n+1}_{+}}|P_{\alpha}(f_{i,k})(x,t)|^{q}d\mu(x,t)\\
&\approx&\int^{\infty}_{0}\left(\inf\left\{\lambda:\ \mu(E_{\lambda}(f_{i,k}))\leq s\right\}\right)^{q}ds.
\end{eqnarray*}
Notice that (\ref{eq-3.6}) indicates that
$$\inf \Big\{ \lambda:\ \mu(E_\lambda(f_{i,k}) \leq 2^j \Big\} > \Big(\frac{2^j}{c_{\alpha,p}(\mu;2^j)}\Big)^{1/(p-q)}.$$
Then we split the integral to get
\begin{eqnarray*}
\|f_{i,k}\|_{L^{p}(\mathbb R^{n})}^{q}&\geq&\sum^{k}_{j=i}2^{j}\left(\inf\left\{\lambda:\ \mu(E_{\lambda}(f_{i,k}))\leq 2^{j}\right\}\right)^{q}\geq\sum^{k}_{j=i}2^{j}\left(\frac{2^{j}}{c_{\alpha,p}(\mu; 2^{j})}\right)^{q/(p-q)}.
\end{eqnarray*}
Finally, applying (\ref{eq-3.5}) and a direct computation, we have
\begin{eqnarray*}
\|f_{i,k}\|_{L^{p}(\mathbb R^{n})}^{q}&\geq&\left(\frac{\sum\limits_{j=i}^k\left({2^j}/{c_{\alpha,p}(\mu;2^j)}\right)^{{q}/{(p-q)}}2^j}{\left(\sum\limits_{j=i}^k
\left({2^j}/{c_{\alpha,p}(\mu;2^j)}\right)
 ^{{p}/{(p-q)}}c_{\alpha,p}(\mu;2^j)\right)^{{q}/{p}}}\right)\|f_{i,k}\|_{L^p(\mathbb{R}^{n})}^q\\
 &\approx& \left(\sum_{j=i}^k\frac{2^{{jp}/{(p-q)}}}{\left(c_{\alpha,p}(\mu;2^j)\right)^{{q}/{(p-q)}}}\right)^{{(p-q)}/{p}}
 \|f_{i,k}\|_{L^p(\mathbb{R}^{n})}^q,
\end{eqnarray*}
which implies
$$\sum_{j=i}^k\frac{2^{{jp}/{(p-q)}}}{\left(c_{\alpha,p}(\mu;2^j)\right)^{{q}/{(p-q)}}}\lesssim 1,$$
and hence
 $$
\int_0^\infty\left(\lambda^{p/q}/c_{\alpha,p}(\mu;\lambda)\right)^{{q}/{(p-q)}}\lambda^{-1}\,d\lambda\lesssim
\sum_{j=-\infty}^\infty\frac{2^{{jp}/{(p-q)}}}{\left(c_{\alpha,p}(\mu;2^j)\right)^{{q}/{(p-q)}}}\lesssim
1.
 $$

 Conversely, let
 $$
 I_{p,q}(\mu)=\int_0^\infty\left(\frac{\lambda^{p/q}}{c_{\alpha,p}(\mu;\lambda)}\right)^{{q}/{(p-q)}}\frac{d\lambda}{\lambda}<\infty.
 $$
 Now for  $f\in C^{\infty}_0(\mathbb{R}^{n})$, let
 $$
 S_{p,q}(\mu;f)=\sum_{j=-\infty}^\infty
 \frac{\left(\mu\left(E_{2^j}(f)\right)-\mu\left(E_{2^{j+1}}(f)\right)\right)^{{p}/{(p-q)}}}{\left(C^{\alpha,p}_{\mathbb R^{n+1}_{+}}\left(E_{2^j}(f)\right)\right)^{{q}/{(p-q)}}}.
 $$
 Using layer-cake representation, H\"older's inequality and Lemma \ref{le-4.2}, we obtain
 \begin{align*}
 &\int_{ \mathbb{R}^{n+1}_+}|P_{\alpha}f(x,t)|^q\, d\mu(x,t)\\
 &\ \ =q\int_{0}^{\infty}\lambda^{q-1}\mu(E_\lambda(f))\, d\lambda\\
 &\ \ \lesssim\sum_{j=-\infty}^\infty\left(\mu\left(E_{2^j}(f)\right)-\mu(E_{2^{j+1}}(f)\right)2^{jq}\\
 &\ \ \lesssim(S_{p,q}(\mu;f))^{{(p-q)}/{p}}
 \left(\sum_{j=-\infty}^\infty 2^{jp}C^{\alpha,p}_{\mathbb R^{n+1}_{+}}\left(E_{2^j}(f)\right)\right)^{q/p}\\
 &\ \ \lesssim(S_{p,q}(\mu;f))^{{(p-q)}/{p}}\left(\int_0^\infty C^{\alpha,p}_{\mathbb R^{n+1}_{+}}\left(\left\{(x,t)\in \mathbb{R}^{n+1}_+:\  |P_{\alpha}f(x,t)|>\lambda\right\}\right)\,d\lambda^p\right)^{{q}/{p}}\\
 &\ \ \lesssim(S_{p,q}(\mu;f))^{{(p-q)}/{p}}\|f\|_{L^p(\mathbb{R}^{n})}^q.
 \end{align*}
 Note also that
 \begin{align*}
 \left(S_{p,q}(\mu;f)\right)^{{(p-q)}/{p}}
 &\ \ =\left\{\sum_{j=-\infty}^\infty\frac{\left(\mu(E_{2^j}(f))-\mu(E_{2^{j+1}}(f))\right)^{{p}/{(p-q)}}}
 {(C^{\alpha,p}_{\mathbb R^{n+1}_{+}}\left(E_{2^j}(f)\right))^{{q}/{(p-q)}}}\right\}^{{(p-q)}/{p}}\\
  &\ \ \lesssim\left\{\sum_{j=-\infty}^\infty\frac{\left(\mu(E_{2^j}(f))-\mu(E_{2^{j+1}}(f))\right)^{{p}/{(p-q)}}}
  {\left(c_{\alpha,p}(\mu;\mu\left(E_{2^j}(f)\right))\right)^{{q}/{(p-q)}}}\right\}^{{(p-q)}/{p}}\\
  &\ \ \lesssim\left\{\sum_{j=-\infty}^\infty\frac{\left(\mu(E_{2^j}(f))\right)^{{p}/{(p-q)}}-\left(\mu(E_{2^{j+1}}(f))\right)^{{p}/{(p-q)}}}
  {\left(c_{\alpha,p}(\mu;\mu\left(E_{2^j}(f)\right))\right)^{{q}/{(p-q)}}}\right\}^{{(p-q)}/{p}}\\
    &\ \ \lesssim\left(\int_0^\infty\frac{d s^{{p}/{(p-q)}}}{\left(c_{\alpha,p}(\mu; s)\right)^{{q}/{(p-q)}}}\right)^{{(p-q)}/{p}}\\
 &\ \ \simeq \left(I_{p,q}(\mu)\right)^{{(p-q)}/{p}}.
 \end{align*}
 Therefore,
 $$
 \left(\int_{ \mathbb{R}^{n+1}_+}|P_{\alpha}f(x,t)|^q\,d\mu(x,t)\right)^{1/q}\lesssim \left(I_{p,q}(\mu)\right)^{{(p-q)}/{pq}}\|f\|_{L^p(\mathbb{R}^{n})}.
 $$

{\it Part II: Proof of statements (iii)}.
Similar to \cite[Theorem 2.1]{COV}, 
this part consists of two steps.

{\it Step 1} - proving that
$
\label{right} \eqref{5}\Rightarrow H_{p}\mu \in L_\mu^{q(p-1)/(p-q)}(\mathbb{R}_{+}^{n+1}).
$

For $m_{0}\in \mathbb Z_{+}={0,1,2,...},$ and $m_{k}\in \mathbb Z$, $k=1,2,...$, denote by $\Theta_{l}$ the following dyadic cube  with side length $l\equiv l(\Theta_{l})$:
$$\Theta_{l}\equiv \left(m_{1}l, (m_{1}+1)l\right)\times\cdots\times\left(m_{n}l, (m_{n}+1)l\right)\times \left(m_{0}l, (m_{0}+1)l\right).$$
Let $\mathbf{\Theta}=\{\Theta_{l}\}$ be the family of all the above-defined-dyadic cubes in $\mathbb R^{n+1}_+$. Setting
$$H_{p}^{d}\mu:=H_{p}^{d}\mu(x,t)=\sum\limits_{\Theta_{l}\in\mathbf{\Theta}}\left({\mu(\Theta_{l})}/{l^{n}}\right)^{p'-1}\textbf{1}_{\Theta_{l}}(x,t),$$
we first  show that
\begin{equation}\label{4.23}
\eqref{5}\Rightarrow\int_{\mathbb{R}_{+}^{n+1}}(H_{p}^{d}\mu)^{{q(p-1)}/{(p-q)}}d\mu<\infty.
\end{equation}
In fact,
$(\ref{5})$ is equivalent to the following inequality by the  duality
$$
\|P_{\alpha}^{*}(fd\mu)\|_{L^{p'}(\mathbb{R}^{n})}^{p'}\lesssim \|f\|_{L^{q'}(\mathbb{R}_{+}^{n+1}, \mu)}^{p'} \quad\forall f\in L^{q'}(\mathbb{R}_{+}^{n+1}, \mu).
$$
Applying Lemma \ref{l22} to $H_{p}^{d}\mu$ and $f d\mu$, and using Jensen's inequality, we thus get
$$
\|P_{\alpha}^{*}(f d\mu)\|_{L^{p'}(\mathbb{R}^{n})}^{p'}\gtrsim \int_{\mathbb{R}_{+}^{n+1}}H_{p}^{d}(fd\mu)(x,t)f(x,t)d\mu
\gtrsim \sum_{\Theta_{l}}\left\{{\int_{\Theta_{l}}f(x,t)d\mu}\right\}^{p'}l^{n(1-p')},
$$
and hence
$$
\sum_{\Theta_{l}}\left\{{\int_{\Theta_{l}}f(x,t)d\mu}\right\}^{p'}l^{n(1-p')}\lesssim \|f\|_{L_\mu^{q'}(\mathbb{R}_{+}^{n+1})}^{p'},
$$
which is equivalent to
\begin{equation}\label{4.25''}
\sum_{\Theta_{l}}g_{\Theta_{l}}\left({\int_{\Theta_{l}}f(x,t)\,d\mu(x,t)}\right)^{p'}\left(\mu(\Theta_{l})\right)^{-p'}\lesssim \|f\|_{L_\mu^{q'}(\mathbb{R}_{+}^{n+1})}^{p'} \quad\hbox{with}\quad
g_{\Theta_{l}}=\left(\mu(\Theta_{l})\right)^{p'}l^{n(1-p')}.
\end{equation}

For $0\le \widetilde{f}\in L_\mu^{{q'}/{p'}}(\mathbb{R}_{+}^{n+1})$, set
$M(x,t)=(M_{\mu}^{d}\widetilde{f})^{{1}/{p'}}(x,t).$
Here $M_{\mu}^{d}$ is the dyadic Hardy-Littlewood maximal function defined as
$$
M_{\mu}^{d}f(x,t)=\sup_{(x,t)\in \Theta_{l}\in \mathbf{\Theta}}\frac{1}{\mu(\Theta_{l})}\int_{\Theta_{l}}|f(y,s)|d\mu.
$$
Applying $\eqref{4.25''}$ to $M$ in place of $f$, and using $
\|M\|_{L^{q'}(\mathbb{R}_{+}^{n+1}, \mu)}^{p'}\lesssim \|\widetilde{f}\|_{L^{{q'}/{p'}}(\mathbb{R}_{+}^{n+1},\mu)},
$ we find
$$
\sum_{\Theta_{l}}g_{\Theta_{l}} \left(\frac{1}{\mu(\Theta_{l})}\int_{\Theta_{l}}f(x,t)d\mu(x,y)\right)^{p'}\lesssim \|M\|_{L^{q'}(\mathbb{R}_{+}^{n+1}, \mu)}^{p'}\lesssim \|\widetilde{f}\|_{L^{{q'}/{p'}}(\mathbb{R}_{+}^{n+1},\mu)}.
$$
Then using
$$
\left(\frac{1}{\mu(\Theta_{l})}\int_{\Theta_{l}}M(x,t)d\mu\right)^{p'}\gtrsim \frac{1}{\mu(\Theta_{l})}\int_{\Theta_{l}}\widetilde{f}(x,t)d\mu,
$$
one has
$$
\sum_{\Theta_{l}}\frac{g_{\Theta_{l}}}{\mu(\Theta_{l})}\int_{\Theta_{l}}\widetilde{f}(x,t)d\mu\lesssim \|\widetilde{f}\|_{L^{{q'}/{p'}}(\mathbb{R}_{+}^{n+1},\mu)}.
$$
Thereby getting by the duality that
$$
\int_{\mathbb{R}_{+}^{n+1}}\sum_{\Theta_{l}}\frac{g_{\Theta_{l}}}{\mu(\Theta_{l})}\textbf{1}_{\Theta_{l}}\widetilde{f}(x,t)d\mu=\sum_{\Theta_{l}}\frac{g_{\Theta_{l}}}{\mu(\Theta_{l})}\textbf{1}_{\Theta_{l}}\in L^{{q'}/{(q'-p')}}(\mathbb{R}_{+}^{n+1},\mu)$$
i.e.$$
\sum_{\Theta_{l}}\left(\frac{\mu(\Theta_{l})}{l^{n}}\right)^{p'-1}\textbf{1}_{\Theta_{l}}\in L^{{q(p-1)}/{(p-q)}}(\mathbb{R}_{+}^{n+1},\mu),
$$
which shows \eqref{4.23} as desired.

We proceed the proof by letting
$$\begin{cases}
H_{p}^{d,\lambda}\mu(x,t)=\sum_{\Theta_{l}\in\mathbf{\Theta}_{\lambda}}\left(\frac{\mu(\Theta_{l})}{l^{n}}\right)^{p'-1}\textbf{1}_{\Theta_{l}}(x,t);\\
\mathbf{\Theta}_{\lambda}=\mathbf{\Theta}+\lambda=\{\Theta_{l}+\lambda\}_{\Theta_{l}\in \mathbf{\Theta}};\\
\Theta_{l}+\lambda=\{(x,t)+\lambda:\ (x,t)\in \Theta_{l}\}.
\end{cases}$$
 Then
\begin{equation}\label{4.25}
\sup_{\lambda\in \mathbb{R}_{+}^{n+1}}\int_{\mathbb{R}_{+}^{n+1}}\left(H_{p}^{d,\lambda}\mu(x,t)\right)^{{q(p-1)}/{(p-q)}}d\mu<\infty
\end{equation}
by the arguments leading to \eqref{4.23}(apply just the same for $\Theta_{l}$ instead of $\Theta$). Hence Step 1 is completed by showing that
\begin{equation}\label{4.25'}
H_{p}\mu \in L_\mu^{q(p-1)/(p-q)}(\mathbb{R}_{+}^{n+1}),
\end{equation}
which will be considered in two cases.

{\it Case 1. $\mu$ is a doubling measure.} $(\ref{4.25'})$ can be obtained by $(\ref{4.23})$ and the following estimate
$$
H_{p}\mu(x,t)\lesssim \sum_{\Theta_{l}}\left(\frac{\mu(\Theta_{l}^{*})}{l^{n}}\right)^{p'-1}\textbf{1}_{\Theta_{l}}(x,t),
$$
where $\Theta^{*}_{l}$ is the cube with the same center as $\Theta_{l}$ and side length two times as $\Theta_{l}$.

{\it Case 2. $\mu$ is a possibly non-doubling measure.} In this case, we first claim that for any $\delta>0$,
\begin{equation}\label{4.26}
H_{p,\delta}\mu(x,t):=\int_{0}^{\delta}\left(\frac{\mu(B_{r}(x,t))}{r^{n}}\right)^{p'-1}\frac{dr}{r}\lesssim \delta^{-(n+1)}\int_{|\lambda|\lesssim\delta}H_{p}^{d,\lambda}\mu(x,t)d\lambda.
\end{equation}
Indeed, for fixed $x\in \mathbb{R}^{n}$ and $\delta>0$ with $2^{i-1}\xi\leq \delta< 2^{i}\xi$, where $i\in\mathbb{Z}$ and $\xi>0$ will be determined later, we have
$$
H_{p,\delta}\mu(x,t)\lesssim \sum_{j=-\infty}^{i}\left(\frac{\mu(B_{2^{j}\xi}(x,t))}{(2^{j}\xi)^{n}}\right)^{p'-1}.
$$
 Assume that $\Theta_{h,j}$ is a cube centered at $x$, of length $h$, with $2^{j-1}<h\leq2^{j}$ for $j\leq i$, then $B_{2^{j}\xi}(x,t)\subseteq \Theta_{h,j}$ for sufficiently small $\xi$ and $\xi$ does not depend on $x$. Suppose that $F=\{\lambda: \lambda\in \mathbb{R}_{+}^{n+1}, \,\,|\lambda|\lesssim \delta\}$ and there exists $\Theta_{l}^{\lambda}\in \Theta_{l}$ satisfying $l=2^{j+1}$ and $\Theta_{l,j}\subseteq \Theta^{\lambda}_{l}$. Then  it is immediate that there is a constant $c(n)>0$ such that $|F|\ge c(n)\delta^{n+1}$ by a geometric consideration. Therefore, one has
\begin{align*}
\mu(B_{2^{j}\xi}(x,t))^{p'-1}&\lesssim |F|^{-1}\int_{F}\sum_{\Theta^{\lambda}_{l}\in \Theta^{\lambda}}\mu(\Theta^{\lambda}_{l})^{p'-1}\textbf{1}_{\Theta^{\lambda}_{l}}(x,t)d\lambda\\
&\lesssim \delta^{-(n+1)}\int_{|\lambda|\lesssim \delta}\sum_{\Theta^{\lambda}_{l}\in \Theta^{\lambda}}\mu(\Theta^{\lambda}_{l})^{p'-1}\textbf{1}_{\Theta^{\lambda}_{l}}(x,t)d\lambda,
\end{align*}
with $l=2^{j+1}$ and $B_{2^{j}\xi}(x,t)\subset \Theta^{\lambda}_{l}$, which clearly implies
\begin{align*}
H_{p,\delta}\mu(x,t)&\lesssim \delta^{-(n+1)}\int_{|\lambda|\lesssim\delta} \sum_{j=-\infty}^{i}\sum_{l=2^{j+1}}\left(\frac{\mu(\Theta^{\lambda}_{l})}{(2^{j}\xi)^{n}}\right)^{p'-1}\textbf{1}_{\Theta^{\lambda}_{l}}(x,t)ds\lesssim \delta^{-(n+1)}\int_{|\lambda|\lesssim\delta}H_{p}^{d,\lambda}\mu(x,t)d\lambda
\end{align*}
as the desired inequality $(\ref{4.26})$. This, along with H\"{o}lder's inequality and the Fubini theorem,  implies
\begin{align*}
&\int_{\mathbb{R}_{+}^{n+1}}\left(H_{p,\delta}\mu(x,t)\right)^{{q(p-1)}/{(p-q)}}d\mu(x,t)\\
& \lesssim \int_{\mathbb{R}_{+}^{n+1}} \left\{\frac{1}{\delta^{(n+1)}}\left(\int_{|\lambda|\lesssim C\delta} \left(H_{p}^{d,\lambda}\mu(x,t)\right)^{{q(p-1)}/{(p-q)}}d\lambda\right)^{{(p-q)}/{[q(p-1)]}}
\left(\int_{|\lambda|\lesssim\delta}d\lambda\right)^{{p(q-1)}/{[q(p-1)]}}\right\}^{{q(p-1)}/{(p-q)}}d\mu\\
& \lesssim\delta^{-(n+1)}\int_{|\lambda|\lesssim\delta}\left( \int_{\mathbb{R}_{+}^{n+1}}\left(H_{p}^{d,\lambda}\mu(x,t)\right)^{{q(p-1)}/{(p-q)}}d\mu\right) d\lambda\\
& \le C(n).
\end{align*}
In the last step, we used $(\ref{4.25}).$ 
The constant $C(n)$ is independent of $\delta$.
$(\ref{4.25'})$ follows readily from the monotone convergence theorem and the above inequality by letting $\delta\rightarrow \infty$.

{\it Step 2} - showing that
$
H_{p}\mu \in L_\mu^{q(p-1)/(p-q)}(\mathbb{R}_{+}^{n+1})\Rightarrow\eqref{5}.
$

Utilizing Lemma \ref{l22}, it is sufficient to show that  $$H_{p}\mu \in L_\mu^{q(p-1)/(p-q)}(\mathbb{R}_{+}^{n+1})
\Rightarrow\int_{\mathbb{R}_{+}^{n+1}}H_{p}(fd\mu)(x,t)f(x,t)d\mu(x,t)\lesssim \|f\|_{L^{q'}_{\mu}(\mathbb{R}_{+}^{n+1})}^{p'} \quad\forall  f\in L_\mu^{q'}(\mathbb{R}_{+}^{n+1}),
$$
since \eqref{5} is equivalent to
$$
\|P_{\alpha}^{*}(fd\mu)\|_{L^{p'}(\mathbb{R}^{n})}\lesssim \|f\|_{L_\mu^{q'}(\mathbb{R}_{+}^{n+1})}.
$$
Without loss of generality, we assume $f\geq 0$ in the following analysis. Let
$$
M_{\mu}f(x,t)=\sup_{r>0}\frac{1}{\mu(B_{r}(x,t))}\int_{B_{r}(x,t)}f(y,s)d\mu(y,s)
$$
denote the centered Hardy-Littlewood maximal function of $f$ with respect to $\mu$. Then the H\"{o}lder inequality shows
\begin{align*}
&\int_{\mathbb{R}_{+}^{n+1}}H_{p}(fd\mu)(x,t)f(x,t)d\mu(x,t)\\
&\ \ \lesssim \int_{\mathbb{R}_{+}^{n+1}}\left(M_{\mu}f(x,t)\right)^{p'-1}H_{p}\mu(x,t)f(x,t)d\mu(x,t)\\
&\ \ \lesssim \left(\int_{\mathbb{R}_{+}^{n+1}}\left(M_{\mu}f(x,t)\right)^{q'}d\mu(x,t)\right)^{{(p'-1)}/{q'}} \left(\int_{\mathbb{R}_{+}^{n+1}}\left(f(x,t)H_{p}\mu(x,t)\right)^{{q'}/{(q'-p'+1)}}d\mu(x,t)\right)^{{(q'-p'+1)}/{q'}}
\end{align*}
via the following observation
\begin{align*}
H_{p}(fd\mu)(x,t)
&\approx\int_{0}^{\infty} \left(\frac{\mu(B_{r}(x,t))}{r^{n}}\right)^{p'-1}\left(\frac{1}{\mu(B_{r}(x,t))}\int_{B_{r}(x,t)}f(x,t)d\mu(x,t)\right)^{p'-1}\frac{dr}{r}\\
&\lesssim \left(M_{\mu}f(x,t)\right)^{p'-1}H_{p}\mu(x,t).
\end{align*}

We conclude from the $L_\mu^{q'}(\mathbb{R}_{+}^{n+1})$-boundedness of $M_{\mu}$ (cf.\cite{F}) and a further use of the H\"{o}lder inequality that
$$
\int_{\mathbb{R}_{+}^{n+1}}H_{p}(fd\mu)(x,t)f(x,t)d\mu(x,t)\lesssim \|f\|_{L_\mu^{q'}(\mathbb{R}_{+}^{n+1})}^{p'}\left(\int_{\mathbb{R}_{+}^{n+1}}\left(H_{p}\mu\right)^{{q(p-1)}/{(p-q)}}d\mu(x,t)\right)^{{(p-q)}/{[q(p-1)]}}
$$
as desired.
\end{proof}

\section{Embeddings  of $\dot{W}^{\beta,p}(\mathbb{R}^n)$ to $L^q(\mathbb{R}^{n+1}_+,\mu)$}\label{sec-4}

In this section, we will characterize the embedding (\ref{4})
$$  \|P_{\alpha} f(\cdot,\cdot)\|_{L^{q}(\mathbb{R}^{n+1}_+,\mu)}\lesssim \|f\|_{\dot{W}^{\beta,p}(\mathbb{R}^{n})}
$$
in terms of the capacity and the fractional perimeter of open balls.
\subsection{Embeddings  of $\dot{W}^{\beta,p}(\mathbb{R}^n)$ to $L^q(\mathbb{R}^{n+1}_+,\mu)$ when $1\leq p\leq \min\{n/\beta,q\}$}
\begin{proposition}\label{proposition 1}
Let $\beta\in (0,n), 1\leq p\leq \min\{n/\beta,q\},$  and $\mu\in\mathcal M_{+}(\mathbb R^{n+1}_{+}).$ Then  the following   statements  are equivalent:
    \item{\rm (i)}  $\|P_{\alpha} f(\cdot,\cdot)\|_{L^{q,p}(\mathbb{R}^{n+1}_+,\mu)}\lesssim \|f\|_{\dot{W}^{\beta,p}(\mathbb{R}^{n})} \quad \forall\ f\in C_0^{\infty}(\mathbb{R}^{n});$

    \item{\rm (ii)}  $\|P_{\alpha} f(\cdot,\cdot)\|_{L^{q}(\mathbb{R}^{n+1}_+,\mu)}\lesssim \|f\|_{\dot{W}^{\beta,p}(\mathbb{R}^{n})} \quad \forall\ f\in C_0^{\infty}(\mathbb{R}^{n});$
\item{\rm (iii)} $\|P_{\alpha} f(\cdot,\cdot)\|_{L^{q,\infty}(\mathbb{R}^{n+1}_+,\mu)}\lesssim \|f\|_{\dot{W}^{\beta,p}(\mathbb{R}^{n})} \quad \forall\ f\in C_0^{\infty}(\mathbb{R}^{n});$
\item{\rm (iv)} $\sup\limits_{t>0}\frac{t^{p/q}}{c^\beta_p(\mu;t)}<\infty;$

\item{\rm (v)}  $(\mu(T(O)))^{p/q}\lesssim Cap_{\mathbb{R}^n}^{\beta,p}(O)$  holds for any bounded open set $ O\subseteq \mathbb{R}^n.$

\end{proposition}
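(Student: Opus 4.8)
The plan is to prove the cycle $(i)\Rightarrow(ii)\Rightarrow(iii)\Rightarrow(v)\Leftrightarrow(iv)\Rightarrow(i)$. Two of these links are essentially formal. Since $1\le p\le q$, one has the continuous inclusions $L^{q,p}(\mathbb R^{n+1}_{+},\mu)\hookrightarrow L^{q,q}(\mathbb R^{n+1}_{+},\mu)=L^{q}(\mathbb R^{n+1}_{+},\mu)\hookrightarrow L^{q,\infty}(\mathbb R^{n+1}_{+},\mu)$ (nesting of Lorentz spaces in the secondary index, followed by Chebyshev's inequality), which gives $(i)\Rightarrow(ii)\Rightarrow(iii)$ at once. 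The equivalence $(iv)\Leftrightarrow(v)$ is obtained by unwinding the definition of $c^{\beta}_{p}(\mu;t)$: if $(v)$ holds with constant $A$, then for every $t>0$ and every bounded open $O$ with $\mu(T(O))>t$ we have $t^{p/q}<(\mu(T(O)))^{p/q}\le A\,Cap_{\mathbb R^{n}}^{\beta,p}(O)$, and taking the infimum over such $O$ yields $t^{p/q}\le A\,c^{\beta}_{p}(\mu;t)$; conversely, given a bounded open $O$ and any $t<\mu(T(O))$, the set $O$ is admissible in the infimum defining $c^{\beta}_{p}(\mu;t)$, so $(iv)$ gives $t^{p/q}\le A\,c^{\beta}_{p}(\mu;t)\le A\,Cap_{\mathbb R^{n}}^{\beta,p}(O)$, and letting $t\uparrow\mu(T(O))$ recovers $(v)$.

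For $(iii)\Rightarrow(v)$, fix a bounded open set $O\subseteq\mathbb R^{n}$ and any $f\in C_0^{\infty}(\mathbb R^{n})$ with $f\ge0$ and $f\ge 1_{O}$; since $O$ is open and contained in $\{f\ge1\}$, we have $O\subseteq\mathrm{Int}(\{x:f(x)\ge1\})$, so Lemma \ref{lemma 2}(iv) supplies a constant $\eta_{n,\alpha}>0$, independent of $f$ and $O$, with $P_{\alpha}f(x,t)=(p^{\alpha}_{t}\ast f)(x,t)\ge\eta_{n,\alpha}$ for all $(x,t)\in T(O)$. Consequently $T(O)\subseteq\{(x,t)\in\mathbb R^{n+1}_{+}:|P_{\alpha}f(x,t)|>\eta_{n,\alpha}/2\}$, and evaluating the weak-type bound $(iii)$ at the level $s=\eta_{n,\alpha}/2$ gives $(\mu(T(O)))^{1/q}\lesssim\|f\|_{\dot{W}^{\beta,p}(\mathbb R^{n})}$ with an implied constant depending only on $n,\alpha$ and the constant in $(iii)$. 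Taking the infimum over all such $f$ produces $(\mu(T(O)))^{1/q}\lesssim(Cap_{\mathbb R^{n}}^{\beta,p}(O))^{1/p}$, i.e. $(v)$.

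The substantive step is $(v)\Rightarrow(i)$. Fix $f\in C_0^{\infty}(\mathbb R^{n})$ and, for $s>0$ and $k\in\mathbb N$, use the sets $L^{\alpha,\beta}_{s}(f)$ and $R^{\alpha,\beta}_{s}(f)$ of Lemma \ref{lemma 2}. Since $R^{\alpha,\beta}_{s}(f)$ is open, $R^{\alpha,\beta}_{s}(f)\cap B(0,k)$ is a bounded open subset of $\mathbb R^{n}$, so $(v)$ applies to it; combining Lemma \ref{lemma 2}(i), this instance of $(v)$, and the inclusion $R^{\alpha,\beta}_{s}(f)\subseteq\{x:\mathcal M f(x)>s/\theta_{\alpha}\}$ furnished by Lemma \ref{lemma 2}(iii) together with monotonicity of $Cap_{\mathbb R^{n}}^{\beta,p}(\cdot)$, we obtain
\[
\bigl(\mu\bigl(L^{\alpha,\beta}_{s}(f)\cap T(B(0,k))\bigr)\bigr)^{p/q}\ \lesssim\ Cap_{\mathbb R^{n}}^{\beta,p}\bigl(\{x\in\mathbb R^{n}:\mathcal M f(x)>s/\theta_{\alpha}\}\bigr),
\]
with the implied constant independent of $k$ and $s$. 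Integrating $ds^{p}$ over $(0,\infty)$, changing variables $s\mapsto\theta_{\alpha}s$, and invoking the capacitary strong type estimate for the Hardy--Littlewood maximal operator (Lemma \ref{lemma 4}(ii)), we get
\[
\int_{0}^{\infty}\bigl(\mu\bigl(L^{\alpha,\beta}_{s}(f)\cap T(B(0,k))\bigr)\bigr)^{p/q}\,ds^{p}\ \lesssim\ \|f\|_{\dot{W}^{\beta,p}(\mathbb R^{n})}^{p},
\]
uniformly in $k$. Since $T(B(0,k))\uparrow\mathbb R^{n+1}_{+}$ as $k\to\infty$, the monotone convergence theorem (applied first in $k$, then in $s$) removes the truncation and gives $\|P_{\alpha}f\|_{L^{q,p}(\mathbb R^{n+1}_{+},\mu)}^{p}=\int_{0}^{\infty}(\mu(L^{\alpha,\beta}_{s}(f)))^{p/q}\,ds^{p}\lesssim\|f\|_{\dot{W}^{\beta,p}(\mathbb R^{n})}^{p}$, which is $(i)$.

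The main obstacle is the implication $(v)\Rightarrow(i)$. All three analytic ingredients it rests on --- the tent/covering comparison of Lemma \ref{lemma 2}(i), the nontangential-to-maximal domination of Lemma \ref{lemma 2}(iii), and the capacitary strong type inequality Lemma \ref{lemma 4}(ii) --- are already available, so the real work is to thread the truncation $T(B(0,k))$ through the whole chain while keeping every constant independent of $k$, and then to justify interchanging the limit $k\to\infty$ with the $ds^{p}$-integration; the minor nuisance of distinguishing $\{\,\cdot>s\,\}$ from $\{\,\cdot\ge s\,\}$, which affects only countably many levels, is handled in the standard way.
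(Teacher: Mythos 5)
Your proposal is correct and uses the same key lemmas as the paper (Lemma~\ref{lemma 2}(i)(iii)(iv) and Lemma~\ref{lemma 4}(ii)) in essentially the same way; the only organizational difference is that you dispatch (iv)$\Leftrightarrow$(v) directly from the definition of $c^{\beta}_{p}(\mu;t)$, whereas the paper instead runs the two chains (iii)$\Rightarrow$(v)$\Rightarrow$(i) and (iii)$\Rightarrow$(iv)$\Rightarrow$(i) separately, repeating part of the argument. Your version is a mild streamlining of the same proof, not a genuinely different route.
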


\begin{proof}
The implications (i) $\Longrightarrow $ (ii) $ \Longrightarrow$ (iii) can be deduced from
$$\left(s^q\mu(L^{\alpha,\beta}_{s}(f))\right)^{p/q}
\leq \left(q\int_0^\infty \mu(L^{\alpha,\beta}_{s}(f))s^{q-1}
ds \right)^{p/q}\leq \int_0^\infty \left(\mu(L^{\alpha,\beta}_{s}(f))\right)^{p/q}ds^p$$
since
$$q\mu(L^{\alpha,\beta}_{s}(f))s^{q-1}
\leq
\frac{d}{ds}
\left(\int_0^s (\mu(L^{\alpha,\beta}_{t}(f)))^{p/q}dt^p
\right)^{q/p}.$$
Now, we prove (iii) $\Longrightarrow$ (v) $\Longrightarrow$ (i). If (iii) is true,
$$K_{p,q}(\mu)=\sup_{f\in C^\infty_0(\mathbb{R}^n)\ \& \ \|f\|_{\dot{W}^{\beta, p}(\mathbb{R}^n)}>0 }\frac{\sup_{s>0}s\left(\mu\left(\left\{(x,t)\in \mathbb{R}^{n+1}_+:\ |P_{\alpha} f(x,t)|>s\right\}\right)\right)^{1/q}}{\|f\|_{\dot{W}^{\beta, p}(\mathbb{R}^n)}}<\infty.$$
 Then, for any $f \in C^\infty_0(\mathbb{R}^n)$ and any open set $O\subseteq \hbox{ Int }(\{x\in \mathbb{R}^n: f(x)\geq 1\}),$  (iv) of  Lemma \ref{lemma 2} implies
$T(O)\subset L^{\alpha,\beta}_{\eta_{n,\alpha}}(f), $ thus
\begin{equation}\label{51}
\mu(T(O))\leq \mu(L^{\alpha,\beta}_{\eta_{n,\alpha}}(f))\lesssim (K_{p,q}(\mu)\|f\|_{\dot{W}^{\beta, p}(\mathbb{R}^n)})^q.\end{equation}
Thus (v) holds.

For (v) $\Longrightarrow$ (i), denote
$$Q_{p,q}(\mu):=\sup\left\{\frac{(\mu(T(O)))^{p/q}}{Cap_{\mathbb{R}^n}^{\beta,p}(O)}:\ \text{ bounded open }  O\subseteq \mathbb{R}^n\right\}<\infty.$$ Lemmas \ref{lemma 2}\ \&\  \ref{lemma 4} imply
\begin{eqnarray*}
&&\int_0^\infty(\mu(L^{\alpha,\beta}_s)(f)\cap T(B(0,k)))^{p/q}ds^p\\
&&\quad\leq \int_0^\infty(\mu(T(R^{\alpha,\beta}_s(f)\cap B(0,k)))^{p/q}ds^p \\
&&\quad\leq \int_0^\infty(\mu(T(\{x\in \mathbb{R}^n: \theta_\alpha\mathcal{M}(f)(x)>s\}\cap B(0,k)))^{p/q}ds^p \\
&&\quad\leq Q_{p,q}(\mu)\int_0^\infty(Cap_{\mathbb{R}^n}^{\beta,p}(\{x\in \mathbb{R}^n: \theta_\alpha\mathcal{M}(f)(x)>s\}))ds^p \\
&&\quad\leq Q_{p,q}(\mu)\|f\|^p_{\dot{W}^{\beta,p}(\mathbb{R}^n)}
\end{eqnarray*}
for any $f\in C^\infty_0(\mathbb{R}^n).$  Letting $k\longrightarrow \infty$ reaches (i).

Now, we will prove  (iii) $\Longrightarrow$ (iv) $\Longrightarrow$ (i).
If (iii) is true, then (\ref{51}) implies
$t^{p/q}\lesssim (K_{p,q}(\mu))^pCap_{\mathbb{R}^n}^{\beta,p}(O)$ whenever $t\in (0,\mu(T(O))).$ So, $t^{p/q}\lesssim (K_{p,q}(\mu))^pc^\beta_p(\mu;t).$
Thus, (iv) is true.

Assume (iv) is true.
By Lemma \ref{lemma 2} and Lemma \ref{lemma 4}, for any $f\in C^\infty_0(\mathbb{R}^n)$,
\begin{eqnarray*}
&&\int_0^\infty(\mu(L^{\alpha,\beta}_s)(f)\cap T(B(0,k)))^{p/q}ds^p\\
&&\quad \leq \int_0^\infty\frac{(\mu(L^{\alpha,\beta}_s(f)\cap B(0,k)))^{p/q}}{c^\beta_p(\mu;\mu(L^{\alpha,\beta}_s(f)\cap B(0,k)) )} Cap_{\mathbb{R}^n}^{\beta,p}\left(R^{\alpha,\beta}_s(f)\cap B(0,k)\right)ds^p \\
&&\quad\lesssim \sup_{t>0}\frac{t^{p/q}}{c^\beta_p(\mu;t)} \int_0^\infty Cap_{\mathbb{R}^n}^{\beta,p}\left(\left\{x\in \mathbb{R}^n: \theta_\alpha\mathcal{M}(f)(x)>s\right\}\cap B(0,k)\right)ds^p \\
&&\quad\lesssim \sup_{t>0}\frac{t^{p/q}}{c^\beta_p(\mu;t)} \|f\|^p_{\dot{W}^{\beta,p}(\mathbb{R}^n)},
\end{eqnarray*}
which gives (i) via letting $k\longrightarrow \infty$.
\end{proof}

\begin{remark}\label{4.2}
Given $\beta\in (0,n),$ $1=p\leq \min\{n/\beta,q\}<\infty$ or $1<p<\min\{n/\beta,q\}.$  Following  the idea of Xiao in \cite[Theorem 4.2]{Xiao 1} or  that of  Zhai in  \cite[Theorem 1.4]{Zhai}, we can deduce  from Lemma \ref{lemma 1}  that  the condition (v) in the equivalence of  (i)$\equiv$(v) of Proposition
\ref{proposition 1} can be replaced by   \begin{equation}\label{4.111}
    \sup_{x\in\mathbb{R}^n,r>0}\frac{(\mu(T(B(x,r))))^{1/q}}{Cap_{\mathbb{R}^n}^{\beta,p}(B(x,r))}<\infty.
\end{equation}
Specially, when $1=p\leq\min\{n/\beta, q\}<\infty$ and $\beta\in (0,n),$ the equivalence  $Cap_{\dot{W}^{\beta,1}}\approx H ^{n-\beta}_\infty$   implies that  $Cap_{\mathbb{R}^n}^{\beta,p}(B(x,r))$  in (\ref{4.111}) can be replaced by   $H ^{n-\beta}_\infty(B(x,r)).$ Here $H ^{d}_\infty(\cdot)$ denotes the  $d-$dimensional Hausdorff capacity.

\end{remark}

For the endpoint cases $p=1$ and $\beta\in(0,1),$ based on \cite[Theorem 2]{Xiao 2},  we can replace condition (v) of Proposition \ref{proposition 1} by a condition in terms of the fractional perimeter of bounded open sets. 

\begin{theorem}\cite[Theorem 2]{Xiao 2}\label{Them 2 Jie Xiao}
If $K$ is a compact subset of $\mathbb{R}^n,$ then
$$Cap_{\mathbb{R}^n}^{\beta,1}(K)=2\inf_{O\in O^\infty(K)}Per_\beta(O),$$ where $O^\infty(K)$ is the class of all open sets with $C^\infty$ boundary that contain $K.$
\end{theorem}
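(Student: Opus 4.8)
The plan is to establish the two inequalities separately, the main ingredients being the generalized coarea formula \eqref{7}, the identity $Per_\beta(E)=\tfrac12\|1_E\|_{\dot W^{\beta,1}(\mathbb R^n)}$ recorded above, Sard's theorem, and a mollification argument. Throughout, $\beta\in(0,1)$ so that $Per_\beta$ is meaningful and $\|\cdot\|_{\dot W^{\beta,1}}$ reduces to the Gagliardo seminorm ($k=1$); I assume $K\neq\varnothing$ (the empty case being trivial) and read $O^\infty(K)$ as the family of \emph{bounded} open sets with $C^\infty$ boundary containing $K$.

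For the bound $Cap_{\mathbb{R}^n}^{\beta,1}(K)\le 2\inf_{O}Per_\beta(O)$, I would fix $O\in O^\infty(K)$ with $Per_\beta(O)<\infty$ and smooth out $1_O$: with $\rho_\varepsilon$ a standard mollifier supported in $B(0,\varepsilon)$, set $f_\varepsilon:=1_O\ast\rho_\varepsilon$. Since $O$ is bounded and $K\subset O$ is compact, $d:=\mathrm{dist}(K,\mathbb R^n\setminus O)>0$; for $0<\varepsilon<d$ the function $f_\varepsilon$ lies in $C_0^\infty(\mathbb R^n)$, satisfies $0\le f_\varepsilon\le 1$ and $f_\varepsilon\equiv 1$ on $K$, so it is admissible for $Cap_{\mathbb{R}^n}^{\beta,1}(K)$, and Minkowski's integral inequality applied to the Gagliardo seminorm together with translation invariance gives $\|f_\varepsilon\|_{\dot W^{\beta,1}}\le\|1_O\|_{\dot W^{\beta,1}}=2Per_\beta(O)$. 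Taking the infimum over $O$ yields the inequality.

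For the reverse bound, I would take any admissible $f\in C_0^\infty(\mathbb R^n)$, $f\ge 0$, $f\ge 1_K$, and apply \eqref{7} to get
$$\|f\|_{\dot W^{\beta,1}(\mathbb R^n)}=2\int_0^\infty Per_\beta(\{f>t\})\,dt\ \ge\ 2\int_0^1 Per_\beta(\{f>t\})\,dt.$$
For each $t\in(0,1)$ one has $K\subseteq\{f>t\}$ since $f\ge 1>t$ on $K$, and $\{f>t\}$ is open and bounded because $f$ is continuous with compact support and $t>0$; by Sard's theorem a.e. $t$ is a regular value of $f$, so for a.e. $t\in(0,1)$ the boundary $\partial\{f>t\}=\{f=t\}$ is a compact $C^\infty$ hypersurface and hence $\{f>t\}\in O^\infty(K)$. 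Therefore $Per_\beta(\{f>t\})\ge\inf_{O}Per_\beta(O)$ for a.e. $t\in(0,1)$, the displayed line gives $\|f\|_{\dot W^{\beta,1}}\ge 2\inf_{O}Per_\beta(O)$, and taking the infimum over $f$ completes the argument.

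The part needing the most care is the reverse bound, specifically the claim that the superlevel sets $\{f>t\}$ of a test function are, for a.e. $t$, legitimate members of $O^\infty(K)$: this is exactly where Sard's theorem is used and where the $C^\infty$ (as opposed to merely Lipschitz) regularity of the boundary enters, and where one must check carefully that $\partial\{f>t\}$ equals $\{f=t\}$ at a regular value. A further subtlety worth flagging is the convention on $O^\infty(K)$: allowing genuinely unbounded $O$ would make the right-hand infimum degenerate --- e.g. $O=\mathbb R^n\setminus B$ for a tiny ball $B$ far from $K$ has $Per_\beta(O)=Per_\beta(B)$ arbitrarily small --- so the boundedness of the admissible open sets (equivalently, the compact support built into $C_0^\infty$ on the capacity side) is precisely what makes the identity correct.
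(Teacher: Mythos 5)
The paper does not prove this statement---it cites it as \cite[Theorem 2]{Xiao 2}---so there is no in-paper proof to compare your argument against; you are supplying a self-contained proof of a quoted result, and the proof you give is correct. The $\le$ direction via mollification is sound: for $O\in O^\infty(K)$ bounded, $f_\varepsilon:=1_O\ast\rho_\varepsilon$ is a legitimate test function once $\varepsilon<\mathrm{dist}(K,\mathbb{R}^n\setminus O)$, and convolution with a probability measure contracts the Gagliardo seminorm, giving $\|f_\varepsilon\|_{\dot{W}^{\beta,1}(\mathbb{R}^n)}\le\|1_O\|_{\dot{W}^{\beta,1}(\mathbb{R}^n)}=2\,Per_\beta(O)$. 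The $\ge$ direction via the coarea formula \eqref{7} and Sard's theorem is also sound, and the detail you flag as needing care---that at a regular value $t\in(0,1)$ one has $\partial\{f>t\}=\{f=t\}$ and hence $\{f>t\}\in O^\infty(K)$---does need to be spelled out, but it goes through as you indicate: the inclusion $\partial\{f>t\}\subseteq\{f=t\}$ is immediate, and the reverse inclusion follows because $\nabla f\ne 0$ at a regular value forces $f$ to exceed $t$ arbitrarily close to each point of $\{f=t\}$, so that set is entirely boundary. Your remark about the boundedness convention built into $O^\infty(K)$ is correct and essential to the truth of the identity: if unbounded admissible $O$ were permitted, taking $O=\mathbb{R}^n\setminus\overline{B}$ for a small ball $B$ far from $K$ would drive the right-hand infimum to $0$, since $Per_\beta(O)=Per_\beta(B)$.
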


\begin{theorem}\label{them 4}
Let $\beta\in (0,1), 1=p\leq q< \infty$ and $\mu\in\mathcal M_{+}(\mathbb R^{n+1}_{+}).$ Then
 (\ref{4}) is equivalent to
\begin{equation}\label{periOpen}
\mu(T(O))^{1/q}\lesssim Per
_{\beta}(O)
\end{equation}
for all bounded open sets $ O\subseteq \mathbb{R}^n.$
\end{theorem}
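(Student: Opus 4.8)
The plan is to deduce Theorem \ref{them 4} from Proposition \ref{proposition 1} (the equivalence (ii)$\equiv$(v)) together with Theorem \ref{Them 2 Jie Xiao}, by showing that, for $p=1$ and $\beta\in(0,1)$, condition (v) of Proposition \ref{proposition 1},
$$(\mu(T(O)))^{1/q}\lesssim Cap_{\mathbb{R}^n}^{\beta,1}(O)\quad\text{for all bounded open }O\subseteq\mathbb{R}^n,$$
is equivalent to \eqref{periOpen}. Since $\dot{W}^{\beta,1}$ is the relevant space, \eqref{4} with $p=1$ is precisely (ii) of Proposition \ref{proposition 1}, hence equivalent to (v); so the whole theorem reduces to the equivalence of the capacitary inequality (v) and the perimeter inequality \eqref{periOpen}. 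First I would record the elementary observation $Per_\beta(O)=\tfrac12\|1_O\|_{\dot{W}^{\beta,1}(\mathbb{R}^n)}$ (stated in the excerpt) and, more to the point, that for any bounded open $O$ one has $Cap_{\mathbb{R}^n}^{\beta,1}(O)\le 2Per_\beta(O)$ whenever $O$ has smooth boundary, and in general $Cap_{\mathbb{R}^n}^{\beta,1}(O)\le\|1_{O'}\|_{\dot W^{\beta,1}}$ for any smooth-boundary $O'\supseteq O$.

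The direction \eqref{periOpen}$\Rightarrow$(v) is the easy one: given a bounded open $O$, by Theorem \ref{Them 2 Jie Xiao} applied to compact exhausting subsets $K\subseteq O$ (or directly to $\overline{O'}$ for smooth $O'\Subset O$) we can pick a smooth-boundary open set $O'$ with $\overline{O'}\subseteq O$ and $Per_\beta(O')$ as close as we wish to making $2Per_\beta(O')\approx Cap^{\beta,1}_{\mathbb R^n}(\overline{O'})$; combining monotonicity $\mu(T(O'))\le\mu(T(O))$ is the wrong direction, so instead I would run it the other way — exhaust $O$ from inside by smooth sets $O_j\uparrow O$, note $T(O_j)\uparrow T(O)$ hence $\mu(T(O_j))\to\mu(T(O))$ by inner continuity of the Radon measure, and apply \eqref{periOpen} to each $O_j$: $(\mu(T(O_j)))^{1/q}\lesssim Per_\beta(O_j)$. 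The subtlety is that $Per_\beta(O_j)$ need not converge to $Per_\beta(O)$; here I would instead use that \eqref{periOpen} for smooth sets already gives, via Theorem \ref{Them 2 Jie Xiao}, $(\mu(T(K)))^{1/q}\lesssim \inf_{O'\in O^\infty(K)}Per_\beta(O')\approx Cap^{\beta,1}_{\mathbb R^n}(K)$ for compact $K$, and then pass to open $O$ using the inner-regularity $Cap^{\beta,1}_{\mathbb R^n}(O)=\sup\{Cap^{\beta,1}_{\mathbb R^n}(K):K\subset O\text{ compact}\}$ (the analogue of the corollary to Proposition \ref{prop-2.2}, or the standard fact for Sobolev capacities) together with $\mu(T(O))=\sup_K\mu(T(K))$. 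This yields (v).

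For (v)$\Rightarrow$\eqref{periOpen}: given a bounded open $O$, pick (Theorem \ref{Them 2 Jie Xiao}, applied to $\overline O$ if $O$ itself is not admissible, or more carefully to compact $K\subset O$ and then take sup) smooth-boundary sets $O'\supseteq K$ with $2Per_\beta(O')$ arbitrarily close to $Cap^{\beta,1}_{\mathbb R^n}(K)$. But we want an upper bound on $\mu(T(O))$ in terms of $Per_\beta(O)$ directly; the clean route is: for smooth-boundary $O$, $Cap^{\beta,1}_{\mathbb R^n}(O)\le\|1_{O}\|_{\dot W^{\beta,1}(\mathbb R^n)}=2Per_\beta(O)$ (using that $1_O$, while not $C_0^\infty$, can be approximated in $\dot W^{\beta,1}$-norm by smooth functions $\ge 1_O$ when $\partial O$ is smooth — this is where I expect the only real technical work), so (v) gives $(\mu(T(O)))^{1/q}\lesssim Cap^{\beta,1}_{\mathbb R^n}(O)\le 2Per_\beta(O)$. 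For general bounded open $O$, exhaust by smooth-boundary $O_j\uparrow O$ with $T(O_j)\uparrow T(O)$ and $Per_\beta(O_j)\le Per_\beta(O)+o(1)$; such an exhaustion with controlled perimeter exists by the generalized coarea formula \eqref{7} applied to a mollified $1_O$ (choose $O_j=\{x:(\rho_\varepsilon*1_O)(x)>t\}$ for suitable $t\in(0,1)$, so that $\int_0^1 Per_\beta(O_j)\,dt\le\|\rho_\varepsilon*1_O\|_{\dot W^{\beta,1}}\to\|1_O\|_{\dot W^{\beta,1}}=2Per_\beta(O)$, forcing some level set to have small perimeter). Taking $j\to\infty$ and using inner continuity of $\mu$ along $T(O_j)\uparrow T(O)$ gives $(\mu(T(O)))^{1/q}\lesssim Per_\beta(O)$.

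The main obstacle is the smooth-approximation bridge $Cap^{\beta,1}_{\mathbb R^n}(O)\approx 2Per_\beta(O)$ for smooth-boundary open sets — reconciling the definition of $Cap^{\beta,1}$ via $C_0^\infty$ competitors with the perimeter of $O$, which is $\tfrac12\|1_O\|_{\dot W^{\beta,1}}$ but $1_O\notin C_0^\infty$. Theorem \ref{Them 2 Jie Xiao} supplies exactly the needed identity $Cap^{\beta,1}_{\mathbb R^n}(K)=2\inf_{O'\in O^\infty(K)}Per_\beta(O')$ for compact $K$, so in fact I would route everything through compact sets: prove $(\mu(T(K)))^{1/q}\lesssim Per_\beta(O')$ for every $O'\in O^\infty(K)$ (this is \eqref{periOpen} restricted to smooth sets, which by Theorem \ref{Them 2 Jie Xiao} and Proposition \ref{proposition 1}(v) is equivalent to the capacitary estimate on compacta), then upgrade to arbitrary bounded open $O$ by inner regularity of both $\mu(T(\cdot))$ and the quantities involved, and by the coarea exhaustion to relate $\inf$ over smooth $O'\supseteq K$ back to $Per_\beta(O)$. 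Carefully bookkeeping these approximations — in particular ensuring $T(O_j)\uparrow T(O)$, which holds since $B(x,r)\subseteq O$ and $O=\bigcup O_j$ forces $B(x,r)\Subset O$ hence $B(x,r)\subseteq O_j$ eventually for the exhaustion by slightly shrunk smooth sets — is the bulk of the proof; the functional-analytic content is entirely contained in Proposition \ref{proposition 1} and Theorem \ref{Them 2 Jie Xiao}.
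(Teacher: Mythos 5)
Your overall skeleton (reduce everything to Proposition \ref{proposition 1} and Theorem \ref{Them 2 Jie Xiao}) matches the paper's starting point, but the two implications are implemented quite differently, and one of your implementations has a genuine gap.

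For the direction ``capacitary condition $\Rightarrow$ \eqref{periOpen}'', you try to prove $Cap^{\beta,1}_{\mathbb R^n}(O)\lesssim Per_\beta(O)$ from scratch, via smooth $C_0^\infty$ competitors approximating $1_O$ from above, followed by an exhaustion argument. The paper instead gets this estimate in one line by quoting two external results: the Ponce--Spector boxing inequality $H^{n-\beta}_\infty(O)\lesssim Per_\beta(O)$ for bounded open $O$, together with Adams' equivalence $Cap^{\beta,1}_{\mathbb R^n}(\cdot)\approx H^{n-\beta}_\infty(\cdot)$. Your route would require a full development of the approximation lemma you sketch (controlling $\|f_\epsilon\|_{\dot W^{\beta,1}}$ with $f_\epsilon\ge 1_O$, $f_\epsilon\in C_0^\infty$, and then passing from smooth $O$ to general bounded open $O$), which is substantially more work than the paper's citation, and you correctly flag this as ``the only real technical work'' without carrying it out.

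For the direction ``\eqref{periOpen} $\Rightarrow$ \eqref{4}'', there is a concrete error. You propose to recover the full capacitary condition (v) from \eqref{periOpen} via inner regularity, relying on the claim that for an exhaustion $O_j\uparrow O$ by slightly shrunk smooth sets one has $T(O_j)\uparrow T(O)$; you justify this by asserting ``$B(x,r)\subseteq O$ and $O=\bigcup O_j$ forces $B(x,r)\Subset O$.'' That inference is false: $B(x,r)\subseteq O$ does not imply $\overline{B(x,r)}\subseteq O$ (take $O=B(x,r)$), so $(x,r)$ may lie in $T(O)$ without lying in any $T(O_j)$ or $T(K)$ for compact $K\subset O$. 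In fact $\bigcup_{K\subset O\text{ compact}}T(K)=\{(x,r):\overline{B(x,r)}\subseteq O\}$, which is a proper subset of $T(O)$ in general, and $\mu$ may charge the difference, so the inner-regularity step breaks. The paper circumvents this entirely: rather than reconstructing (v), it reruns the proof of (v)$\Rightarrow$(i) in Proposition \ref{proposition 1} directly, bounds $\mu(T(V))\le\mu(T(\overline V))\le\inf_{O\in O^\infty(\overline V)}\mu(T(O))$ for the specific bounded open set $V=\{\theta_\alpha\mathcal Mf>s\}\cap B(0,k)$, applies \eqref{periOpen} to each such smooth $O$, and uses Theorem \ref{Them 2 Jie Xiao} to identify $\inf_{O\in O^\infty(\overline V)}Per_\beta(O)$ with $\tfrac12 Cap^{\beta,1}_{\mathbb R^n}(\overline V)\lesssim Cap^{\beta,1}_{\mathbb R^n}(\{\theta_\alpha\mathcal Mf\ge s\})$. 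Working with closures $\overline V$ and outer smooth neighbourhoods (rather than interior exhaustions) is exactly what avoids the $T(O_j)\uparrow T(O)$ difficulty. You should adopt this structure instead of routing back through condition (v).
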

\begin{proof}

It follows from  Proposition \ref{proposition 1} that  (\ref{4}) is equivalent to
\begin{equation}\label{8}
(\mu(T(O)))^{1/q}\lesssim Cap_{\mathbb{R}^n}^{\beta,1}(O)
\end{equation}
for all bounded open sets $O\subseteq \mathbb{R}^n.$ Thus it suffices to show that $(\ref{8})\Longrightarrow (\ref{periOpen})\Longrightarrow (\ref{4}).$

Firstly, we show that $(\ref{8})\Longrightarrow (\ref{periOpen}).$ Ponce-Spector in  \cite{Ponce Spector} proved that
$$H^{n-\beta}_\infty(O)\lesssim Per_\beta(O)$$
holds for every bounded open sets $O\subseteq \mathbb{R}^n.$
According to \cite{Adams 1989, Adams 1998}, we have 
$Cap_{\mathbb{R}^n}^{\beta,1}(\cdot)\approx H^{n-\beta}_\infty(\cdot).$
 Then (\ref{periOpen}) follows from
$$(\mu(T(O)))^{1//q}\lesssim Cap_{\mathbb{R}^n}^{\beta,1}(O)\lesssim H^{n-\beta}_\infty(O)\lesssim Per_\beta(O).$$

For $(\ref{periOpen})\Longrightarrow(\ref{4}),$ 
denote
$$Q_{q}(\mu)\coloneqq\sup\frac{(\mu(T(O)))^{1/q}}{Per_\beta(O)}<\infty,$$
where the supermum is taken over all bounded open sets $O\subseteq \mathbb{R}^n.$
For any $f\in C^\infty_0(\mathbb{R}^n)$ and any integer $k,$ denote $$V=\{x\in \mathbb{R}^n: \theta_\alpha\mathcal{M}(f)(x)>s\}\cap B(0,k).$$
It follows from Theorem \ref{Them 2 Jie Xiao} that $$Cap_{\mathbb{R}^n}^{\beta,1}(\overline{V})=2\inf_{O\in O^\infty(K)}Per_\beta(\overline{V}),$$ where $O^\infty(K)$ is the class of all open sets with $C^\infty$ boundary that contain $\overline{V}.$
Thus, we have \begin{eqnarray*}
\mu(T(V))^{1/q}&\leq&\mu(T(\overline{V}))^{1/q}\\
&\leq&
\inf_{O\in O^\infty(\overline{V})}\mu(T(O))^{1/q}\\
&\leq& Q_{q}(\mu)\inf_{O\in O^\infty(\overline{V})}Per_\beta(O) \\ &=&\frac{Q_{q}(\mu)}{2}Cap_{\mathbb{R}^n}^{\beta,1}(\overline{V})\\
&\lesssim& Cap_{\mathbb{R}^n}^{\beta,1}(\{x\in \mathbb{R}^n: \theta_\alpha\mathcal{M}(f)(x)\geq s\}).
\end{eqnarray*}

It follows from   Lemma \ref{lemma 2} and Lemma \ref{lemma 4} that, for any $f\in C^\infty_0(\mathbb{R}^n)$,
\begin{eqnarray*}
\int_0^\infty\left(\mu(L^{\alpha,\beta}_s)(f)\cap T(B(0,k))\right)^{1/q}ds
& \leq&\int_0^\infty(\mu(T(R^{\alpha,\beta}_s(f)\cap B(0,k)))^{1/q}ds\\
& \leq & \int_0^\infty(\mu(T(\{x\in \mathbb{R}^n: \theta_\alpha\mathcal{M}(f)(x)>s\}\cap B(0,k)))^{1/q}ds \\
&  \lesssim& \int_0^\infty Cap_{\mathbb{R}^n}^{\beta,1}(\{x\in \mathbb{R}^n: \theta_\alpha\mathcal{M}(f)(x)\geq s\})ds \\
& \lesssim&  \|f\|_{\dot{W}^{\beta,1}(\mathbb{R}^n)},
\end{eqnarray*}
which   reaches (\ref{4}) by letting $k\longrightarrow \infty.$

\end{proof}

\begin{remark}
It follows from Remark \ref{4.2} and $Cap_{\mathbb{R}^n}^{\beta,p}(B(x,r))= r^{n-\beta}Per_\beta(B(0,1)),$
see Xiao \cite[Themrem 4]{Xiao 2}, that  $Per_\beta(O)$  in (\ref{periOpen}) of Theorem   \ref{them 4}
 can be  also  replaced by $r^{n-\beta}Per_\beta(B(0,1))$  when  $\beta\in (0,1).$
\end{remark}

\subsection{Embeddings  of $\dot{W}^{\beta,p}(\mathbb{R}^n)$ to $L^q(\mathbb{R}^{n+1}_+,\mu)$ when $\max\{q,1\}<p\leq n/ \beta$}

\begin{theorem}\label{them 1}
Let $\beta\in (0,n), 0<q<p,1<p\leq n/\beta$ and $\mu\in\mathcal M_{+}(\mathbb R^{n+1}_{+}). $ Then (\ref{4}) is equivalent to
\begin{equation}\label{integralCondition}
    \int_0^\infty\left(\frac{t^{p/q}}{c^\beta_{p}(\mu;t)}\right)^{q/(p-q)}\frac{dt}{t}<\infty.
\end{equation}
\end{theorem}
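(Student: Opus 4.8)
The plan is to prove both implications by transcribing, essentially line for line, Part~I of the proof of Theorem~\ref{Them 3.4}, with the $L^p$-capacity $C^{\alpha,p}_{\mathbb{R}^{n+1}_+}(\cdot)$ replaced by the fractional capacity $Cap_{\mathbb{R}^n}^{\beta,p}(\cdot)$, the capacitary strong type inequality Lemma~\ref{le-4.2} replaced by Lemma~\ref{lemma 4}, and the dictionary between the level sets $L^{\alpha,\beta}_\lambda(f)=\{(x,t)\in\mathbb{R}^{n+1}_+:|P_\alpha f(x,t)|>\lambda\}$ in $\mathbb{R}^{n+1}_+$ and subsets of $\mathbb{R}^n$ supplied by Lemma~\ref{lemma 2}. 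It is convenient to split the argument according to $1<p<n/\beta$ and the endpoint $p=n/\beta$: in the endpoint case one first transplants the relevant capacitary estimates to the Riesz potential space $\dot{\mathcal{L}}^{n/\beta}_{2\beta}(\mathbb{R}^{2n})$ through the extension operator $\mathcal{E}$ of Lemma~\ref{lemma 3}, carries them out there, and pulls the conclusion back using $\mathcal{R}\mathcal{E}=\mathrm{id}$.

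For the sufficiency, assume \eqref{integralCondition}, fix $f\in C_0^\infty(\mathbb{R}^n)$ and a natural number $k$, and put $V_{j,k}=\{x\in\mathbb{R}^n:\theta_\alpha\mathcal{M}f(x)>2^j\}\cap B(0,k)$, a bounded open subset of $\mathbb{R}^n$. By the layer-cake formula, an Abel summation along the dyadic scale, and Hölder's inequality with exponents $p/(p-q)$ and $p/q$,
$$\int_{T(B(0,k))}|P_\alpha f|^q\,d\mu\lesssim\big(S_{p,q}(\mu;f)\big)^{(p-q)/p}\Big(\sum_{j\in\mathbb{Z}}2^{jp}\,Cap_{\mathbb{R}^n}^{\beta,p}(V_{j,k})\Big)^{q/p},$$
where $S_{p,q}(\mu;f)$ is built exactly as in the proof of Theorem~\ref{Them 3.4}, with $L^{\alpha,\beta}_{2^j}(f)\cap T(B(0,k))$ in place of $E_{2^j}(f)$ and $Cap_{\mathbb{R}^n}^{\beta,p}(V_{j,k})$ in place of $C^{\alpha,p}_{\mathbb{R}^{n+1}_+}(E_{2^j}(f))$; here parts (i) and (iii) of Lemma~\ref{lemma 2} are used to pass from $\mu(L^{\alpha,\beta}_{2^j}(f)\cap T(B(0,k)))$ to $\mu(T(V_{j,k}))$. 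Lemma~\ref{lemma 4}(ii) bounds the second factor by $\|f\|_{\dot{W}^{\beta,p}(\mathbb{R}^n)}^q$; for the first factor, Lemma~\ref{lemma 2} together with the monotonicity of $Cap_{\mathbb{R}^n}^{\beta,p}$ and of $t\mapsto c^\beta_p(\mu;t)$ gives $Cap_{\mathbb{R}^n}^{\beta,p}(V_{j,k})\ge c^\beta_p(\mu;\mu(L^{\alpha,\beta}_{2^j}(f)\cap T(B(0,k))))$, and then the elementary inequality $(a-b)^{p/(p-q)}\le a^{p/(p-q)}-b^{p/(p-q)}$ for $a\ge b\ge0$ turns $S_{p,q}(\mu;f)$ into a Riemann--Stieltjes sum that is $\lesssim\int_0^\infty(s^{p/q}/c^\beta_p(\mu;s))^{q/(p-q)}\,ds/s$, i.e. a constant multiple of the quantity in \eqref{integralCondition}. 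Letting $k\to\infty$ by monotone convergence yields \eqref{4}.

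For the necessity, assume \eqref{4}. For each $j\in\mathbb{Z}$ choose a bounded open set $O_j\subseteq\mathbb{R}^n$ with $\mu(T(O_j))>2^j$ and $Cap_{\mathbb{R}^n}^{\beta,p}(O_j)\le 2c^\beta_p(\mu;2^j)$, and a nonnegative $g_j\in C_0^\infty(\mathbb{R}^n)$ with $g_j\ge 1_{O_j}$ and $\|g_j\|_{\dot{W}^{\beta,p}(\mathbb{R}^n)}^p\le 4c^\beta_p(\mu;2^j)$. Put $c_j=(2^j/c^\beta_p(\mu;2^j))^{1/(p-q)}$ and, for integers $i<k$, $f_{i,k}=\eta_{n,\alpha}^{-1}\sup_{i\le j\le k}c_j g_j$. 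Since $P_\alpha$ is order preserving and $O_j\subseteq\mathrm{Int}(\{g_j\ge1\})$, Lemma~\ref{lemma 2}(iv) gives $P_\alpha f_{i,k}\ge\eta_{n,\alpha}^{-1}c_j\,p^\alpha_t\ast g_j\ge c_j$ on $T(O_j)$, hence $\mu(L^{\alpha,\beta}_\lambda(f_{i,k}))\ge\mu(T(O_j))>2^j$ for every $\lambda<c_j$. Rewriting $\int|P_\alpha f_{i,k}|^q\,d\mu$ through the nonsymmetric decreasing rearrangement as in the proof of Theorem~\ref{Them 3.4} and inserting $\inf\{\lambda:\mu(L^{\alpha,\beta}_\lambda(f_{i,k}))\le 2^j\}\ge c_j$, one obtains $\int|P_\alpha f_{i,k}|^q\,d\mu\gtrsim\sum_{j=i}^k 2^jc_j^q=\sum_{j=i}^k 2^{jp/(p-q)}/c^\beta_p(\mu;2^j)^{q/(p-q)}$. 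On the other hand $\|f_{i,k}\|_{\dot{W}^{\beta,p}(\mathbb{R}^n)}^p\lesssim\sum_{j=i}^k c_j^p\|g_j\|_{\dot{W}^{\beta,p}(\mathbb{R}^n)}^p\lesssim\sum_{j=i}^k 2^{jp/(p-q)}/c^\beta_p(\mu;2^j)^{q/(p-q)}$. Feeding both estimates into \eqref{4} forces $\sum_{j=i}^k 2^{jp/(p-q)}/c^\beta_p(\mu;2^j)^{q/(p-q)}\lesssim 1$ with a constant independent of $i,k$; letting $i\to-\infty$ and $k\to\infty$ and comparing the dyadic sum with the integral gives \eqref{integralCondition}. (A routine mollification legitimizes the use of \eqref{4} for the merely Lipschitz $f_{i,k}$, using that $C_0^\infty(\mathbb{R}^n)$ is dense in $\dot{W}^{\beta,p}(\mathbb{R}^n)$ for $p>1$.)

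I expect the crux to be the subadditivity estimate $\big\|\sup_{i\le j\le k}c_j g_j\big\|_{\dot{W}^{\beta,p}(\mathbb{R}^n)}^p\lesssim\sum_{j=i}^k c_j^p\|g_j\|_{\dot{W}^{\beta,p}(\mathbb{R}^n)}^p$ used in the necessity proof. In the $L^p$-case of Theorem~\ref{Them 3.4} this is the trivial bound $(\sup_j h_j)^p\le\sum_j h_j^p$, but for fractional Sobolev spaces a pointwise supremum of functions is \emph{not} controlled by the $\ell^p$-sum of the individual norms in any elementary way. For $1<p<n/\beta$ this must be extracted from the strong capacitary (truncation) machinery that also underlies Lemma~\ref{lemma 4}: one assembles, from the near-extremals $g_j$, a single test function whose $\dot{W}^{\beta,p}$-energy is controlled by $\sum_j c_j^p\,Cap_{\mathbb{R}^n}^{\beta,p}(O_j)$. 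For the endpoint $p=n/\beta$ even this seems out of reach at the level of $\dot{W}^{n/\beta,n/\beta}(\mathbb{R}^n)$, and this is exactly why Lemma~\ref{lemma 3} enters: one replaces each $g_j$ by $\mathcal{E}g_j\in\dot{\mathcal{L}}^{n/\beta}_{2\beta}(\mathbb{R}^{2n})$, performs the supremum construction and the capacitary bound for the Riesz potential capacity on $\mathbb{R}^{2n}$ (where the relevant truncation inequality is classical), and transports the resulting function back through $\mathcal{R}$. Apart from this point the whole proof is the word-for-word analogue of the proof of Theorem~\ref{Them 3.4}(i), Part~I.
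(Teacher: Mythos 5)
Your sufficiency argument matches the paper's line for line, and you have correctly put your finger on the one genuinely delicate step in the necessity direction: the quantity
$\bigl\|\sup_{i\le j\le k}c_j g_j\bigr\|_{\dot W^{\beta,p}(\mathbb R^n)}^p$
is \emph{not} controlled by $\sum_j c_j^p\|g_j\|_{\dot W^{\beta,p}(\mathbb R^n)}^p$ by any pointwise argument, so the statement you want to ``assemble from the truncation machinery'' is precisely the gap in the proposal. The paper does not invoke a truncation inequality here; it sidesteps the problem by never forming a supremum in $\dot W^{\beta,p}$ at all. For $1<p<n/\beta$ one uses the Riesz potential representation $\dot W^{\beta,p}(\mathbb R^n)=I_\beta\ast L^p(\mathbb R^n)$ with $\|I_\beta\ast g\|_{\dot W^{\beta,p}}\approx\|g\|_{L^p}$, which yields the equivalent capacity formula
$Cap^{\beta,p}_{\mathbb R^n}(S)\approx\inf\{\|g\|_{L^p}^p:\ 0\le g,\ I_\beta\ast g\ge 1\text{ on }S\}$.
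Having picked near-extremal $g_j\in L^p$ with $I_\beta\ast g_j\ge 1$ on $O_j$ and $\|g_j\|_{L^p}^p\lesssim c^\beta_p(\mu;2^j)$, the test function is taken to be $I_\beta\ast g_{i,k}$ with $g_{i,k}=\sup_j c_j g_j$; the supremum now lives in $L^p$, where $(\sup_j h_j)^p\le\sum_j h_j^p$ \emph{is} elementary, and since $I_\beta$ is order-preserving one still has $I_\beta\ast g_{i,k}\ge c_j$ on $O_j$, so Lemma~\ref{lemma 2}(iv) applies to its Caffarelli--Silvestre extension as before. This is exactly the same circle of ideas you propose for the endpoint, just carried out on $\mathbb R^n$ itself. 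For $p=n/\beta$ the representation $I_\beta\ast L^{n/\beta}(\mathbb R^n)$ is no longer available, which is why the paper passes to $I^{(2n)}_{2\beta}\ast L^{n/\beta}(\mathbb R^{2n})$ via the extension/restriction pair of Lemma~\ref{lemma 3}: here your plan agrees with the paper's (the explicit choice in the paper is $g_j(x,h)=|h|^{-2\beta}\triangle_h^k f_j(x)$, and one again takes the supremum in $L^{n/\beta}(\mathbb R^{2n})$). So: identify the right test function as a Riesz potential of an $L^p$-supremum rather than as a $\dot W^{\beta,p}$-supremum, and your necessity argument closes.
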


\begin{proof}
$(\ref{integralCondition})\Rightarrow (\ref{4}).$ If
$$I_{p,q}(\mu)\coloneqq\int_0^\infty\left(\frac{t^{p/q}}{c^\beta_{p}(\mu;t)}\right)^{q/(p-q)}\frac{dt}{t}<\infty,$$
then for each $f\in C^\infty_0(\mathbb{R}^n),$ each $j=0,\pm 1, \pm 2,\ldots$, and each natural number $k,$  Lemma \ref{lemma 2} (iii) implies
$$Cap_{\mathbb{R}^n}^{\beta,p}\left(R^{\alpha,\beta}_{2^j}(f)\cap(B(0,k))\right)\leq Cap_{\mathbb{R}^n}^{\beta,p}
\left(\left\{x\in \mathbb{R}^n: \theta_\alpha\mathcal{M}f(x)>2^j \right\}\cap B(0,k)\right).$$
Define
$$\left\{\begin{aligned}
&\mu_{j,k}(f)=\mu\left(T(R^{\alpha,\beta}_{2^j}(f)\cap(B(0,k)))\right),\\
&S_{p,q,k}(\mu;f)=\sum_{j=-\infty}^\infty\frac{(\mu_{j,k}(f)-\mu_{j+1,k}(f))^{{p}/{(p-q)}}}
{(Cap_{\mathbb{R}^n}^{\beta,p}(R^{\alpha,\beta}_{2^j}(f)\cap(B(0,k))))^{{q}/{(p-q)}}}.
\end{aligned}\right.$$
It follows from (ii) of Lemma \ref{lemma 2} that
\begin{eqnarray*}
\left(S_{p,q,k}(\mu;f)\right)^{{(p-q)}/{p}}
&\lesssim &
\left(\sum_{j=-\infty}^\infty\frac{\mu^{{q}/{(p-q)}}_{j,k}(f)-\mu^{{p}/{(p-q)}}_{j+1,k}(f)}{\left(c^\beta_{p}(\mu;\mu_{j,k}(f))
\right)^{{q}/{(p-q)}}}
\right)^{{(p-q)}/{p}}\\
&\lesssim &
\left(\int_0^\infty\frac{1}{\left(c^\beta_{p}(\mu;s)\right)^{{q}/{(p-q)}}}ds^{p/(p-q)}\right)^{{(p-q)}/{p}}\\
&\simeq& (I_{p,q}(\mu))^{{(p-q)}/{p}}.
\end{eqnarray*}

On the other hand,  it follows from the H\"{o}lder inequality, (ii) of Lemma \ref{lemma 4} and (ii)-(iii) of Lemma \ref{lemma 2} that
\begin{eqnarray*}
&&\int_{T(B(0,k))}|P_{\alpha} f(x,t)|^{q}d\mu(x,t)\\
&&\quad= \int_0^\infty
\mu(L^{\alpha,\beta}_s(f)\cap T(B(0,k)))ds^q\\
&&\quad\lesssim \sum_{j=-\infty}^\infty (\mu_{j,k}(f)-\mu_{j+1,k}(f))2^{jq}\\
&&\quad\lesssim\left(S_{p,q,k}(\mu;f)\right)^{{(p-q)}/{p}}\left(\sum_{j=-\infty}^{\infty} 2^{jp}Cap_{\mathbb{R}^n}^{\beta,p}(R^{\alpha,\beta}_{2j}(f)\cap(B(0,k))\right)^{q/p} \\
&&\quad\lesssim\left(S_{p,q,k}(\mu;f)\right)^{{(p-q)}/{p}}\left(\int_0^\infty Cap_{\mathbb{R}^n}^{\beta,p}\left(\left\{x\in \mathbb{R}^N: \theta_\alpha\mathcal{M}f(x)>s\right\}\cap(B(0,k)\right)ds^p\right)^{q/p} \\
&&\quad\lesssim\left(S_{p,q,k}(\mu;f)\right)^{{(p-q)}/{p}}\|f\|^q_{\dot{W}^{\beta,p}(\mathbb{R}^n)}.
\end{eqnarray*}
So, we get
$$\left(\int_{T(B(0,k))}|P_{\alpha} f(x,t)|^{q}d\mu(x,t)\right)^{1/q}\lesssim (I_{p,q}(\mu))^{{(p-q)}/{pq}}\|f\|_{\dot{W}^{\beta,p}(\mathbb{R}^n)}.$$
Letting $k\longrightarrow \infty$ derives (\ref{4}).

$(\ref{4})\Rightarrow(\ref{integralCondition}). $ If (\ref{4}) holds, then
$$C_{p,q}(\mu)\coloneqq\sup_{f\in C_0^\infty(\mathbb{R}^n),\ \|f\|_{\dot{W}^{\beta,p}(\mathbb{R}^n)}>0}\frac{1}{\|f\|_{\dot{W}^{\beta,p}(\mathbb{R}^n)}}\left(\int_{\mathbb{R}^{n+1}_+}|P_{\alpha} f(x,t)|^{q} d\mu(x,t)\right)^{1/q}<\infty.$$
Then for each  $f\in C_0^\infty(\mathbb{R}^n)$ with $ \|f\|_{\dot{W}^{\beta,p}(\mathbb{R}^n)}>0,$ there holds
$$\left(\int_{\mathbb{R}^{n+1}_+}|P_{\alpha} f(x,t)|^{q}d\mu(x,t)\right)^{1/q}\leq C_{p,q}(\mu)\|f\|_{\dot{W}^{\beta,p}(\mathbb{R}^n)},$$
which implies by Markov's inequality
$$\sup_{s>0}s\left(\mu(L^{\alpha,\beta}_s(f))\right)^{1/q}\lesssim C_{p,q}(\mu)\|f\|_{\dot{W}^{\beta,p}(\mathbb{R}^n)}.$$
This, together with (iv) of Lemma \ref{lemma 2}, implies that for fixed  $f\in C_0^\infty(\mathbb{R}^n),$
$$\mu(T(O))\lesssim   C^q_{p,q}(\mu)\|f\|^q_{\dot{W}^{\beta,p}(\mathbb{R}^n)}$$
for any  bounded open set $O\subseteq \hbox{Int}(\{x\in \mathbb{R}^n: f(x)\geq 1\}).$
The definition of $c^\beta_p(\mu;t)$ implies that $ c^\beta_p(\mu;t)>0$, and for $t\in (0,\infty)$ and every $j$ there exists a bounded open set $O_j\subseteq \mathbb{R}^n$ such that
$Cap_{\mathbb{R}^n}^{\beta,p}(O_j)\leq 2  c^\beta_p(\mu;2^j)$
and $\mu(T(O_j))>2^j.$

When $p\in (1,n/\beta)$, according to \cite[Theorem 2, P. 347]{Mazya}, $f\in \dot{W}^{\beta,p}(\mathbb{R}^n)$ when $p\in (1,n/\beta)$ if and only if $f=c I_{\beta}\ast g(x)$ for some $f\in L^{p}(\mathbb{R}^n).$ Here $I_{\beta}\ast g(x)=\int_{\mathbb{R}^n}\frac{f(y)}{|x-y|^{n-\beta}}dy.$ Thus, $\|f\|_{\dot{W}^{\beta,p}(\mathbb{R}^n)}=\|(-\triangle)^{\beta/2}f\|_{L^p(\mathbb{R}^n)}\approx\|(-\triangle)^{\beta/2}(I_{\beta}\ast g)\|_{L^p(\mathbb{R}^n)}\approx\| g\|_{L^p(\mathbb{R}^n)}.$ So, we have $$Cap_{\mathbb{R}^n}^{\beta,p}(S)\approx \inf\left\{\|g\|^p_{L^p(\mathbb{R}^n)}: g\in C^\infty_0(\mathbb{R}^n), g\geq 0, S\subseteq \hbox{Int}(\{x\in \mathbb{R}^n: I_{\beta}\ast g(x)\geq 1\})\right\}.$$
Then, there exists $g_j\in C^\infty_0(\mathbb{R}^n)$ such that $ I_{\beta}\ast g_j(x)\geq 1$ on $ O_j,$ and $$\|g_{j}\|^p_{L^p(\mathbb{R}^n)}\leq 2Cap_{\mathbb{R}^n}^{\beta,p}(O_j)\leq 4 c^\beta_p(\mu;2^j).$$
For the integers $ i,k$ with $i<k,$ define
$$g_{i,k}=\sup_{i\leq j\leq k}\left(\frac{2^j}{c^\beta_p(\mu;2^j)}\right)^{\frac{1}{p-q}} g_j.$$
Then $g_{i,k}\in L^p(\mathbb{R}^n)\cap L^1_{loc}(\mathbb{R}^n)$
with 
\begin{equation}\label{normofgik}
\|g_{i,k}\|^p_{L^p(\mathbb{R}^n)}\lesssim \sum_{j=i}^k\left(\frac{2^j}{c^\beta_p(\mu;2^j)}\right)^{\frac{p}{p-q}} c^\beta_p(\mu;2^j).\end{equation}
On the other hand,  $I_{\beta}\ast g_{i,k}\in \dot{W}^{\beta,p}(\mathbb{R}^n)$
  since $I_\beta:  \dot{F}_{p,2}^0(\mathbb{R}^n)=L^p(\mathbb{R}^n) \to	 \dot{F}_{p,2}^{\beta}=\dot{W}^{\beta,p}(\mathbb{R}^n), $ see \cite[Theorem 1, page 242]{Hans} 

Since for $i\leq j\leq k,$
$$x\in O_j\Rightarrow  I_{\beta}\ast g_{i,k}(x)\geq \left(\frac{2^j}{c^\beta_p(\mu;2^j)}\right)^{\frac{1}{p-q}},$$
(iv) of Lemma \ref{lemma 2} implies
that there exists a constant $\eta_{n,\alpha}$ such that
$$(x,t)\in T(O_j)\Rightarrow |u_{i,k}(x,t)|\geq \left(\frac{2^j}{c^\beta_p(\mu;2^j)}\right)^{\frac{1}{p-q}}\eta_{n,\alpha}.$$
Here $u_{i,k}$ denotes the Caffarelli-Silvestre extension  of $I_{\beta}\ast g_{i,k}.$
Thus,  with $s=\left(\frac{2^j}{c^\beta_p(\mu;2^j)}\right)^{\frac{1}{p-q}}\frac{\eta_{n,\alpha}}{2},$
$$2^j\leq \mu(T(O_j))\leq \mu (L^{\alpha,\beta}_{s}(I_\beta\ast g_{i,k})).$$
Similar to that in the the proof of (i) in Theorem \ref{Them 3.4},  we can get
\begin{eqnarray*}
(C_{p,q}(\mu)\|g_{i,k}\|_{L^p(\mathbb{R}^n)})^q
&\gtrsim&
\int_{\mathbb{R}^{n+1}_+}|u_{i,k}(x,t)|d\mu\\
&\approx& \int_0^\infty\left(\inf\left\{s: \mu(L^{\alpha,\beta}_s)(I_\beta\ast g_{i,k})\leq s\right\}\right)^qds\\
&\gtrsim&\sum_{j=i}^k\left(\inf\left\{s: \mu(L^{\alpha,\beta}_s)(I_\beta\ast g_{i,k})\leq2^j \right\}\right)^q2^j\\
&\gtrsim&
\sum_{j=i}^k\left(\frac{2^j}{c^\beta_p(\mu;2^j)}\right)^{\frac{q}{p-q}}2^j\\
&\gtrsim&
\left(\sum_{j=i}^k\frac{2^{{jp}/{(p-q)}}}{c^\beta_p(\mu;2^j)^{{q}/{(p-q)}}}\right)^{\frac{p-q}{p}}\|g_{i,k}\|^q_{L^p(\mathbb{R}^n)},
\end{eqnarray*}
where in the last step, we used (\ref{normofgik}).
Thus, $$\sum_{j=i}^k\frac{2^{{jp}/{(p-q)}}}{c^\beta_p(\mu;2^j)^{{q}/{(p-q)}}}\lesssim(C_{p,q}(\mu))^{{pq}/{(p-q)}}.$$

When $p={n}/{\beta},$ the definition of $Cap_{\mathbb{R}^n}^{\beta,p}(\cdot)$ implies that there exists a positive $f_j\in C^\infty_0(\mathbb{R}^n)$ such that  $f_j\geq 1$ on $O_j$ and $$\|f_j\|^p_{\dot{W}^{\beta,p}(\mathbb{R}^n)}\leq 2 Cap_{\mathbb{R}^n}^{\beta,p}(O_j)\leq 4c^\beta_p(\mu;2^j).$$
Define $g_j(x,h)\coloneqq|h|^{-2\beta}\triangle^k_hf_j(x).$ Then $g_j\in L^p(\mathbb{R}^{2n})\cap L^1_{loc}(\mathbb{R}^{2n}). $ On the other hand, \cite[Theorem 5.1]{Adams} implies $f_j(x)=CI_{2\beta}^{(2n)}\ast g_j(x,0),$ for a.e. $x\in \mathbb{R}^n.$


For the integers $ i,k$ with $i<k,$ define
$$g_{i,k}(x,h)=\sup_{i\leq j\leq k}\left(\frac{2^j}{c^\beta_p(\mu;2^j)}\right)^{\frac{1}{p-q}}  g_j(x,h).$$
Then $g_{i,k}\in L^{p}(\mathbb{R}^{2n})\cap L^1_{loc}(\mathbb{R}^{2n}),$ 
and Lemma \ref{lemma 3} implies
\begin{eqnarray*}
\|\mathcal{R}I^{(2n)}_{2\beta}\ast g_{i,k}\|^p_{\dot{W}^{\beta,p}(\mathbb{R}^n)}
&\lesssim& \sum_{j=i}^k\left(\frac{2^j}{c^\beta_p(\mu;2^j)}\right)^{\frac{p}{p-q}} \|I^{(2n)}_{2\beta}\ast g_{i,k}\|^p_{\dot{\mathcal{L}}^p(\mathbb{R}^{2n})}\\
&\lesssim& \sum_{j=i}^k\left(\frac{2^j}{c^\beta_p(\mu;2^j)}\right)^{\frac{p}{p-q}} \|f_j\|^p_{\dot{W}^{\beta,p}(\mathbb{R}^n)}\\
&\lesssim& \sum_{j=i}^k\left(\frac{2^j}{c^\beta_p(\mu;2^j)}\right)^{\frac{p}{p-q}} c^\beta_p(\mu;2^j).
\end{eqnarray*}

Let $u_{i,k}(x,t)$ be the Caffarelli-Silvestre extension of $\mathcal{R}(I^{(2n)}_{2\beta}g_{i,k})(x)$. Then, similar to the previous case, 
for $i\leq j\leq k,$
$$x\in O_j\Rightarrow   \mathcal{R}(I^{(2n)}_{2\beta}g_{i,k})(x)\geq \left(\frac{2^j}{c^\beta_p(\mu;2^j)}\right)^{\frac{1}{p-q}}.$$
 (iv) of Lemma \ref{lemma 2} implies
that there exists a constant $\eta_{n,\alpha}$ such that
$$(x,t)\in T(O_j)\Rightarrow |u_{i,k}(x,t)|\geq \left(\frac{2^j}{c^\beta_p(\mu;2^j)}\right)^{\frac{1}{p-q}}\eta_{n,\alpha}$$
and with $s=\left(\frac{2^j}{c^\beta_p(\mu;2^j)}\right)^{\frac{1}{p-q}}\frac{\eta_{n,\alpha}}{2},$
$$2^j\leq \mu(T(O_j))\leq \mu (L^{\alpha,\beta}_{s}(\mathcal{R}(I^{(2n)}_{2\beta}g_{i,k}))).$$

We can get
 \begin{eqnarray*}
(C_{p,q}(\mu)\|\mathcal{R}(I^{(2n)}_{2\beta}g_{i,k})\|_{\dot{W}^{\beta,p}(\mathbb{R}^n)})^q
&\gtrsim&
\int_{\mathbb{R}^{n+1}_+}|u_{i,k}(x,t)|d\mu\\
&\approx& \int_0^\infty\left(\inf\left\{s: \mu(L^{\alpha,\beta}_s(\mathcal{R}(I^{(2n)}_{2\beta}g_{i,k})))\leq s\right\}\right)^qds\\
&\gtrsim&\sum_{j=i}^k\left(\inf\left\{s: \mu(L^{\alpha,\beta}_s(\mathcal{R}(I^{(2n)}_{2\beta}g_{i,k})))\leq2^j \right\}\right)^q2^j\\
&\gtrsim&
\sum_{j=i}^k\left(\frac{2^j}{c^\beta_p(\mu;2^j)}\right)^{\frac{q}{p-q}}2^j\\
&\gtrsim&
\left(\sum_{j=i}^k\frac{2^{{jp}/{(p-q)}}}{c^\beta_p(\mu;2^j)^{{q}/{(p-q)}}}\right)^{\frac{p-q}{p}}\|\mathcal{R}(I^{(2n)}_{2\beta}g_{i,k}\|^q_{\dot{W}^{\beta,p}(\mathbb{R}^n)}.
\end{eqnarray*}
Thus, we have
 $$\sum_{j=i}^k\frac{2^{\frac{jp}{(p-q)}}}{c^\beta_p(\mu;2^j)^{\frac{q}{p-q}}}\lesssim\left(C_{p,q}(\mu)\right)^{\frac{pq}{p-q}}.$$
Letting $i,k\longrightarrow \infty,$ we reach
$$\int_0^\infty\left(\frac{t^{p/q}}{c^\beta_p(\mu;t)}\right)^{\frac{q}{p-q}}\lesssim \sum_{-\infty}^\infty\frac{2^{\frac{jp}{p-q}}}{(c^\beta_p(\mu;2^j))^{\frac{q}{(p-q)}}}\lesssim (C_{p,q}(\mu))^{\frac{pq}{p-q}},$$ which implies (\ref{integralCondition}).
\end{proof}

When $q<p=1,$ we can establish the following necessary conditions for the embedding (\ref{4}).

\begin{proposition}\label{them 5}
Let $\beta\in (0,n), 0<q<p=1$ and $\mu\in\mathcal M_{+}(\mathbb R^{n+1}_{+}). $ Then
(\ref{4}) $\Rightarrow$ (i) $\Rightarrow$ (ii) $\Rightarrow$ (iii):

\item{\rm (i)} $\|P_{\alpha} f(\cdot,\cdot)\|_{L^{q,\infty}(\mathbb{R}^{n+1}_+,\mu)}\lesssim \|f\|_{\dot{W}^{\beta,1}(\mathbb{R}^{n})} \quad \forall f\in C_0^{\infty}(\mathbb{R}^{n});$
\item{\rm (ii)}
$\sup\left\{\frac{(\mu(T(O)))^{1/q}}{Cap_{\mathbb{R}^n}^{\beta,1}(O)}:  \text{ bounded open set} \,  O\subseteq \mathbb{R}^n\right\}<\infty;$
\item{\rm (iii)}  $\|P_{\alpha} f(\cdot,\cdot)\|_{L^{q,1}(\mathbb{R}^{n+1}_+,\mu)}\lesssim \|f\|_{\dot{W}^{\beta,1}(\mathbb{R}^{n})} \quad \forall f\in C_0^{\infty}(\mathbb{R}^{n}).$

\end{proposition}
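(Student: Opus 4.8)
The plan is to establish the three implications in the stated order. The first, (\ref{4}) $\Rightarrow$ (i), is immediate, since for every nonnegative $\mu$ one has $\|g\|_{L^{q,\infty}(\mathbb{R}^{n+1}_+,\mu)}\leq\|g\|_{L^{q}(\mathbb{R}^{n+1}_+,\mu)}$; applying this with $g=P_{\alpha}f$ turns the $L^{q}$-bound into (i). For (i) $\Rightarrow$ (ii), I would fix a bounded open set $O\subseteq\mathbb{R}^{n}$ and an arbitrary $f\in C_0^{\infty}(\mathbb{R}^{n})$ with $f\geq0$ and $f\geq 1_{O}$; since $O$ is open, $O=\hbox{Int}(O)\subseteq\hbox{Int}(\{x\in\mathbb{R}^n:f(x)\geq1\})$, so part (iv) of Lemma \ref{lemma 2} provides a constant $\eta_{n,\alpha}>0$ with $(p^{\alpha}_{t}\ast|f|)(x,t)\geq\eta_{n,\alpha}$ for all $(x,t)\in T(O)$, and because $f\geq0$ this says exactly $T(O)\subseteq L^{\alpha,\beta}_{\eta_{n,\alpha}}(f)$. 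Consequently
$$\eta_{n,\alpha}^{q}\,\mu(T(O))\leq\eta_{n,\alpha}^{q}\,\mu\bigl(L^{\alpha,\beta}_{\eta_{n,\alpha}}(f)\bigr)\leq\|P_{\alpha}f\|_{L^{q,\infty}(\mathbb{R}^{n+1}_+,\mu)}^{q}\lesssim\|f\|_{\dot{W}^{\beta,1}(\mathbb{R}^{n})}^{q},$$
and taking the infimum over all admissible $f$ yields $(\mu(T(O)))^{1/q}\lesssim Cap_{\mathbb{R}^n}^{\beta,1}(O)$, which is (ii).

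The substantive step is (ii) $\Rightarrow$ (iii), for which I would follow the scheme used for the implication (v) $\Rightarrow$ (i) in Proposition \ref{proposition 1} and in the second half of the proof of Theorem \ref{them 4}, carried out for the Lorentz exponent pair $(q,1)$. Fix $f\in C_0^{\infty}(\mathbb{R}^{n})$ and a natural number $k$. Part (i) of Lemma \ref{lemma 2} gives $\mu(L^{\alpha,\beta}_{s}(f)\cap T(B(0,k)))\leq\mu(T(R^{\alpha,\beta}_{s}(f)\cap B(0,k)))$; since the cone maximal function $x\mapsto\sup_{|y-x|<t}|P_{\alpha}f(y,t)|$ is lower semicontinuous, $R^{\alpha,\beta}_{s}(f)$ is open, hence $R^{\alpha,\beta}_{s}(f)\cap B(0,k)$ is a bounded open subset of $\mathbb{R}^n$ and condition (ii) applies to it. Combining this with the pointwise bound $R^{\alpha,\beta}_{s}(f)\subseteq\{x\in\mathbb{R}^n:\theta_{\alpha}\mathcal{M}f(x)>s\}$ from part (iii) of Lemma \ref{lemma 2}, the monotonicity of $Cap_{\mathbb{R}^n}^{\beta,1}(\cdot)$, and the capacitary weak estimate for $\mathcal{M}$ in part (ii) of Lemma \ref{lemma 4}, I obtain
$$\int_{0}^{\infty}\bigl(\mu(L^{\alpha,\beta}_{s}(f)\cap T(B(0,k)))\bigr)^{1/q}\,ds\lesssim\int_{0}^{\infty}Cap_{\mathbb{R}^n}^{\beta,1}\bigl(\{x\in\mathbb{R}^n:\theta_{\alpha}\mathcal{M}f(x)>s\}\bigr)\,ds\lesssim\|f\|_{\dot{W}^{\beta,1}(\mathbb{R}^{n})},$$
with implied constants independent of $k$. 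Since $\{T(B(0,k))\}_{k}$ increases to $\mathbb{R}^{n+1}_+$, one has $L^{\alpha,\beta}_{s}(f)\cap T(B(0,k))\nearrow L^{\alpha,\beta}_{s}(f)$, so continuity of $\mu$ from below together with the monotone convergence theorem in the $s$-integral give $\|P_{\alpha}f\|_{L^{q,1}(\mathbb{R}^{n+1}_+,\mu)}=\int_{0}^{\infty}(\mu(L^{\alpha,\beta}_{s}(f)))^{1/q}\,ds\lesssim\|f\|_{\dot{W}^{\beta,1}(\mathbb{R}^{n})}$, which is (iii).

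I do not anticipate a genuine obstacle here: the argument is a reassembly of tools already established, so the effort lies in bookkeeping rather than in a new idea. The two points that need care are (a) checking that the truncated level sets $R^{\alpha,\beta}_{s}(f)\cap B(0,k)$ are legitimately \emph{bounded} and open, which is exactly what licenses invoking condition (ii), and (b) the passage $k\to\infty$, which rests on the exhaustion $\mathbb{R}^{n+1}_+=\bigcup_{k}T(B(0,k))$ with $T(B(0,k))$ increasing, so that monotone convergence is applicable inside the $s$-integral. Structurally, the reason one obtains only a one-way chain (and why the last statement is phrased as an $L^{q,1}$-estimate rather than an $L^{q}$-estimate) is that $p=1$ rules out the duality and reflexivity arguments available for $p>1$, forcing the route through the Hardy--Littlewood maximal function and Lemma \ref{lemma 4}(ii); since $q<1$ here, $L^{q}(\mathbb{R}^{n+1}_+,\mu)\subseteq L^{q,1}(\mathbb{R}^{n+1}_+,\mu)$, so (iii) is not expected to return (\ref{4}).
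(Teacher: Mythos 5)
Your proof is correct and follows essentially the same route the paper sketches: Chebyshev for (\ref{4})$\Rightarrow$(i), Lemma \ref{lemma 2}(iv) applied to test functions $f\geq 1_O$ for (i)$\Rightarrow$(ii), and the chain through Lemma \ref{lemma 2}(i),(iii) and Lemma \ref{lemma 4}(ii) for (ii)$\Rightarrow$(iii), just as in the proof of Proposition \ref{proposition 1} and Theorem \ref{them 4}. The only stylistic difference is that the paper's sketch of (ii)$\Rightarrow$(iii) applies condition (ii) directly to the level set $\{x:\theta_\alpha\mathcal{M}f(x)>s\}$ (which is bounded for $f\in C_0^\infty$), whereas you truncate by $B(0,k)$ and then pass to the limit; both are valid, and your truncation is the more careful way to guarantee boundedness of the set to which condition (ii) is applied.
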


\begin{proof}
The proofs of (\ref{4}) $\Longrightarrow$ (i) $\Longrightarrow$ (ii) are similar to those of  (ii) $\Longrightarrow$ (iii) $\Longrightarrow$ (v)  in the proof of  Proposition \ref{proposition 1}.   For the implication (ii) $\Longrightarrow$ (iii), firstly, there holds
\begin{eqnarray*}
\mu(L^{\alpha,\beta}_s(f))&\leq& \mu(T(R^{\alpha,\beta}_s(f)))\\
&\leq& \mu\left(T\left(\left\{x\in \mathbb{R}^n:\ \theta_\alpha\mathcal{M}f(x)>s\right\}\right)\right)\\
&\lesssim & \left(Cap_{\mathbb{R}^n}^{\beta,1}\left(\left\{x\in \mathbb{R}^n: \theta_\alpha\mathcal{M}f(x)>s\right\}\right)\right)^q,
\end{eqnarray*}
due to  Lemma \ref{lemma 2}. 
Then, applying (ii) of Lemma \ref{lemma 4}  we can show  (ii)$\Longrightarrow$(iii) in a similar way as the proof of (v)$\Longrightarrow$(i) in Proposition \ref{proposition 1}.

\end{proof}

\section*{Acknowledgement}
The authors are very grateful to the anonymous reviewers, especially one of whom gave numerous useful suggestions that lead to a substantial improvement of this manuscript.


\end{document}